\tikzstyle{arrow} = [draw, -latex']
\tikzset{>=latex}
\Crefname{ALC@unique}{Line}{Lines}
\newcommand{\vnorm}[1]{\left\lVert#1\right\rVert}
\newcommand{\norm}[1]{\vnorm{#1}}
\newtheorem{theorem}{Theorem}
\newtheorem{assumption}[theorem]{Assumption}
\theoremstyle{definition}
\newtheorem{definition}[theorem]{Definition}
\newtheorem{remark}[theorem]{Remark}
\crefname{figure}{Figure}{Figures}
\crefname{theorem}{Theorem}{Theorems}
\crefname{remark}{Remark}{Remarks}
\crefname{definition}{Definition}{Definitions}
\crefname{assumption}{Assumption}{Assumption}
\begin{document}
\title[Efficient MPC for PDEs with Goal Oriented Error Estimation]{Efficient Model Predictive Control for Parabolic PDEs with Goal Oriented Error Estimation}
\author[Lars Grüne, Manuel Schaller, and Anton Schiela]{Lars Grüne$^{1}$, Manuel Schaller$^{1,2}$, and Anton Schiela$^{1}$}
\thanks{}

\thanks{$^{1}$Universität Bayreuth, Institute of Mathematics, Germany}
\thanks{$^{2}$Technische Universit\"at Ilmemau, Institute of Mathematics, Germany (e-mail: manuel.schaller@tu-ilmenau.de).}

	\thanks{{\bf Acknowledgments:}	This work was supported by the German Research Foundation (DFG) under grant numbers GR 1569/17-1 and SCHI 1379/5-1.}

\begin{abstract}
We show how a posteriori goal oriented error estimation can be used to efficiently solve the subproblems occurring in a Model Predictive Control (MPC) algorithm. In MPC, only an initial part of a computed solution is implemented as a feedback, which motivates grid refinement particularly tailored to this context. To this end, we present a truncated cost functional as objective for goal oriented adaptivity and prove under stabilizability assumptions that error indicators decay exponentially outside the support of this quantity. This leads to very efficient time and space discretizations for MPC, which we will illustrate by means of various numerical examples.

\smallskip
\noindent \textbf{Keywords.}     Model Predictive Control, Space-Time Finite Elements, Grid Adaptivity
\end{abstract}

\maketitle
\section{Introduction}
\noindent In this work, we present a posteriori goal oriented grid adaptivity as a method to efficiently solve the subproblems arising in a Model Predictive Control (MPC) algorithm. 
MPC is a feedback controller, in which the solution of an optimal control problem (OCP) on an infinite or indefinite time horizon is approximated by a series of optimal control problems on a finite horizon $T>0$. In every feedback loop, an initial part of the optimal control up to time $\tau>0$, where often $\tau \ll T$, is implemented as feedback. This procedure is depicted in Algorithm~\ref{alg::mpcabstract}.
\begin{algorithm}[H]
	\caption{Standard MPC Algorithm}
	\label{alg::mpcabstract}
	\begin{algorithmic}[1]
		\STATE{Given: Prediction horizon $0<T$, implementation horizon $0<\tau\leq T$, initial state $x_0$}
		\STATE{$k=0$}
		\WHILE{controller active}
		\STATE{Solve OCP on $[k\tau,T+k\tau]$ with initial datum $x_k$, save optimal control in $u$}
		\STATE{Implement $u_{\big|[k\tau,(k+1)\tau]}$ as feedback, measure/estimate resulting state and save in $x_{k+1}$}
		\STATE{$k = k+1$}
		\ENDWHILE
	\end{algorithmic}
\end{algorithm}
MPC is widely used in many applications, such as automotive engineering \cite{Hrovat2012}, electrical engineering \cite{Vazquez2014a}, agriculture \cite{Ding2018}, and chemical engineering \cite{Camacho2007,Qin2003}.
It can be rigorously shown that this procedure yields an approximation of the optimal control for the original problem, if, e.g., a turnpike property holds \cite{Gruene2016}. In a nutshell, the turnpike property states that solutions to OCPs stay close to an optimal steady state, the so-called turnpike, for the majority of the time. For an in-depth introduction to and analysis of MPC methods, the interested reader is referred to the monographs \cite{Gruene2016b,Rawlings2017}.

As only an initial part of a computed optimal control is used as a feedback, the computation only needs to be accurate on this initial part. To this end, goal oriented a posteriori error estimation, cf., e.g., \cite{Bangerth2003} for an overview of this subject, yields a technique for specialized grid refinement tailored to an MPC context. In a nutshell, the aim of goal oriented error estimation techniques is to refine the time and/or space grid to reduce the error in an arbitrary functional $I(x,u)$, the so called quantity of interest (QOI), in order to guarantee that 
\begin{align*}
I(x,u) - I(\tilde{x},\tilde{u}) < \text{tol},
\end{align*}
where $(x,u)$ is the optimal solution and $(\tilde{x},\tilde{u})$ a numerical approximation on a time and/or space grid. In the particular case of MPC, this methodology can be used to minimize the error of the MPC feedback and its influence on the state, meaning that $I(x,u)$ is a functional incorporating only $x_{\mid_{[0,\tau]}}$ and $u_{\mid_{[0,\tau]}}$. To this end, we present a truncated version of the cost functional as an objective for refinement that is specialized for MPC. 


The underlying feature that allows for efficient goal oriented methods is an exponential stability of the optimally controlled system. This property was analyzed for linear-quadratic infinite dimensional systems in \cite{Gruene2018c,Gruene2019} and for nonlinear parabolic problems in \cite{Gruene2021} and is very closely connected to the turnpike property. We briefly recall existing numerical approaches exploiting this stability in the literature. Turnpike behavior was used in \cite{Trelat2015} to construct an efficient shooting algorithm. In \cite{Gruene2018c,Shin2020}, the exponential stability of the optimal control problem was leveraged to construct a priori discretizations which are specialized for MPC. In \cite{Na2020}, a Schwarz decomposition method is presented, which exploits the exponential decay of perturbations to show improved convergence properties. However, in all these works, a priori discretizations were chosen, requiring an a priori knowledge of the rate of decay. In practical applications, this information is not at hand and one has to rely on a posteriori methods for grid refinement. 

Besides grid adaptivity, an alternative technique to reduce computational effort is proper orthogonal decomposition (POD) \cite{Graessle2018,Graessle2019a,Kunisch2001}. A combination of MPC with POD methods was presented recently in \cite{Gruene2019b,Mechelli2017}. However, to the best of our knowledge, goal oriented techniques, i.e., reduction of the discretization error in an arbitrary functional, have not been considered in the literature.

The main objective of this work is twofold. First, we prove under stabilizability assumptions that the continuous-time and discrete-time error indicators for a QOI localized in time decay exponentially outside the support of the QOI. Second, we illustrate the resulting performance gain in an MPC scheme and compare the closed-loop cost functional value to the standard case of refinement to reduce the error in the cost functional. In that context we will see that for autonomous, non-autonomous, linear and nonlinear problems, a truncated QOI yields a significant increase of the MPC controllers performance. The presented approach is shown to be efficient in the sense that for a fixed number of total degrees of freedom, the closed-loop cost of the MPC trajectory will be significantly lower when using a truncated cost functional for refinement as opposed to using the full cost functional.
\section{Setting and preliminaries}
\label{sec:intro}
In this section we define the parabolic optimal control problem and the corresponding optimality conditions. We further present the spatial and temporal discretization scheme and recall the basics of goal oriented error estimation for parabolic optimization problems.
\subsection{Optimal control problem and optimality conditions}
Suppose that $(V,\|\cdot\|_V)$ is a separable Banach space, $(H,\langle\cdot,\cdot\rangle)$ is a Hilbert space with norm $\|\cdot\|$, and $V\hookrightarrow H \cong H^*\hookrightarrow V^*$ forms a Gelfand triple, i.e., the embeddings are continuous and dense. A common choice would be, e.g, $V=H^1_0(\Omega)$ and $H=L_2(\Omega)$, where we denote by $\Omega\subset\mathbb{R}^n,\, n\in \{2,3\}$ the spatial domain with locally Lipschitz boundary $\partial \Omega$.  Further, we fix $T>0$ and denote
\begin{align*}
W([0,T]) := \{v\colon[0,T]\to V\,|\,v\in L_2(0,T;V), \dot{v}\in L_2(0,T;V^*)\},
\end{align*}
where by $\dot{v}$ we mean the weak time derivative, which we will sometimes also denote as $\frac{d}{dt}x$.
A well-known property is that $W([0,T])\hookrightarrow C(0,T;H)$, cf.\ \cite[Proposition 23.23]{Zeidler19902a}. For an in-depth treatment of parabolic equations in this setting, the reader is referred to the standard literature \cite{Wloka1987,Zeidler19902a}. The control space will be denoted by $U$, which we assume to be a Hilbert space with scalar product $\langle \cdot,\cdot\rangle_U$ and corresponding norm $\|\cdot\|_U$. We consider the optimal control problem
\begin{align}
\label{def:numerics:ocp}
\begin{split}
\min_{(x,u)}\, J(x,u)&:=\int_{0}^{T}\bar{J}(t,x(t),u(t))\,dt\\
\text{s.t. }\dot{x}(t) &= \bar{A}(x(t)) + \bar{B}u(t)+f(t) \quad  \text{f.a.e. } t\in [0,T],\\
\qquad x(0) &= x_0,
\end{split}
\end{align}
where $x_0\in H$, $f\in L_2(0,T;V^*)$ and for a.e.\ $t\in [0,T]$, we assume that $\bar{J}(t,\cdot,\cdot)$ is twice continuously Fr\'echet differentiable functional on $V\times U$. Analogously, we assume $J(\cdot)$ to be a twice continuously Fr\'echet differentiable functional on $L_2(0,T;V)\times L_2(0,T;U)$. The operator $\bar{B}:U\to V^*$ is assumed to be linear and continuous and $\bar{A} \colon V \to V^*$ is a twice continuously Fr\'echet differentiable operator. We will assume that the optimal control problem has a solution in $W([0,T])$, which, besides the classical lower semi-continuity and coercivity properties of the objective functional can be assured via existence of a continuous control to state map. For clarity of presentation, we assume that $J_{xu}=J_{ux}=0$, which is, e.g., the case for standard tracking type functionals.


We will now derive optimality conditions via the Lagrange formalism, i.e., we define for $(\lambda,\lambda_0)\in L_2(0,T;V)\times H$ the Lagrange function
\begin{align}
\label{defn:numerics:contlag}
L(x,u,(\lambda,\lambda_0))&:= J(x,u) + \langle \dot{x}-A(x)-Bu-f,\lambda\rangle_{L_2(0,T;V^*)\times L_2(0,T;V)} + \langle x(0)-x_0,\lambda_0\rangle,
\end{align}
where
\begin{align*}
\langle A(x),\lambda\rangle_{L_2(0,T;V^*) \times L_2(0,T;V)} &:= \int_0^T \langle \bar{A}(x), \lambda\rangle_{V^* \times V}\,dt,\\
\langle Bu,\lambda\rangle_{L_2(0,T;V^*) \times L_2(0,T;V)} &:= \int_0^T \langle \bar{B}u, \lambda\rangle_{V^* \times V}\,dt.
\end{align*}
The corresponding optimality conditions read
\begin{align*}
L'(x,u,\lambda,\lambda_0)=
\begin{pmatrix}
J_x(x,u) + \left(\frac{d}{dt} - A'(x)\right)^*\lambda + \langle \lambda(0),\lambda_0\rangle\\
J_u(x,u)- B^*\lambda\\
x'-A(x) - Bu-f\\
x(0)-x_0
\end{pmatrix}
=0.
\end{align*}
As $J_x(x,u)\in L_2(0,T;V^*)$, we obtain $\lambda \in W([0,T])$ and $\lambda(0)=\lambda_0$, cf.\ \cite[Proposition 3.8]{Schiela2013}. Hence we may remove $\lambda_0$ from the list of arguments of $L$ and we may write equivalently using integration by parts in the first equation,
	\begin{align}
	\label{eq:numerics:optcond}
	L'(x,u,\lambda)=
	\begin{pmatrix}
	J_x(x,u) -\dot{\lambda}- A'(x)^*\lambda\\
	\lambda(T)\\
	J_u(x,u)- B^*\lambda\\
	x'-A(x) - Bu-f\\
	x(0)-x_0
	\end{pmatrix}
	=0.
	\end{align}

We note that one could straightforwardly incorporate a terminal state penalization ${J}_T(x(T))$ into the cost functional, which would result in a nonzero terminal condition  $\lambda(T)$ for the backward dynamics of the adjoint state $\lambda$, cf.\ the first two lines of \eqref{eq:numerics:optcond}. We will omit this terminal cost for ease of presentation.
Correspondingly, the second derivative of the Lagrange function is given by
\begin{align}
\label{eq:numerics:L''}
L''(x,u,\lambda) = 
\begin{pmatrix}
J_{xx}(x,u)-{A}''(x)^*\lambda& 0 & -\frac{d}{dt}-{A}'(x)^* \\
0&0&E_T\\
0 & J_{uu}(x,u) & -B^*\\
\frac{d}{dt} - {A}'(x)& -B & 0\\
E_0&0&0
\end{pmatrix},
\end{align}
where for $t\in [0,T]$, $E_t\colon C(0,T;H)\to H$ is the time evaluation operator defined by $E_tx=x(t)$.
\subsection{Discretization and goal oriented error estimation}
For the discretization of the infinite-dimensional problem we use a discontinuous Galerkin approach of order zero in time denoted by dG(0) and a continuous Galerkin approach of order one in space denoted by cG(1), cf.\ \cite{Meidner2008a,Meidner2007}. In the literature this combined approach is often referred to as dG(0)cG(1)-discretization. 
We will briefly recall the definition of this discretization and the corresponding a posteriori error goal oriented estimation. In the following we will abbreviate
\begin{align*}
\mathcal{W}:=W([0,T]),\qquad \mathcal{U}=L_2(0,T;U), \qquad \langle v,w\rangle_{I} := \int_I \langle v(t),w(t)\rangle_{V^*\times V}\,dt.
\end{align*}
\subsubsection*{Time discretization}
We split up the interval $[0,T] = \{0\}\cup I_1 \cup I_2\cup \cdots \cup I_M$ into subintervals $I_m = (t_{m-1},t_m]$ of corresponding size $k_m := t_m - t_{m-1}$ for $m\in \{1,\ldots,M\}$ and set $I_0 := \{0\}$, where $0=t_0<t_1\cdots<t_M = T$. We define the discrete-time spaces of piecewise constant in time ansatz functions by 
\begin{align*}
	\mathcal{W}_k &:= \{v_k \in L_2(0,T;H)\,|\, {v_k}_{\big|I_m} \in \mathcal{P}^0(I_m,V),\,\, m=1,\ldots,M,\, v_{k}(0)\in H\},\\
	\mathcal{U}_k &:= \{u_k \in L_2(0,T;U)\,|\,  {u_k}_{\big|I_m} \in \mathcal{P}^0(I_m,U),\,\, m=1,\ldots,M\},
	\end{align*}
	where $ \mathcal{P}^0(I_m,V)$ (respectively $\mathcal{P}^0(I_m,U)$) denotes the space of polynomials of degree zero (i.e., constant functions) defined on $I_m$ with values in $V$ (respectively $U$).
By continuity of elements in $\mathcal{W}=W([0,T])\hookrightarrow C(0,T;H)$, this forms a non-conforming ansatz space as elements of $\mathcal{W}_k$ are not necessarily continuous. However, despite the nonconformity, the important feature of Galerkin orthogonality of the difference of continuous and discrete solution to the test space is preserved, cf. \cite[Remark 5.2]{Meidner2008a}. To capture the possible discontinuities, we denote the right and left sided limits and the jump at time grid point $t_m$ for $v_k\in \mathcal{W}_k$ via 
\begin{align*}
v_{k,m}^+ := \lim_{t\to 0^+}v_k(t_m+t),\quad v_{k,m}^- := \lim_{t\to 0^+}v_k(t_m-t),\quad [v]_{k,m} := v_{k,m}^+ - v_{k,m}^-.
\end{align*}
Due to the nonconformity of the ansatz space, the Lagrange function defined in \eqref{defn:numerics:contlag} is not defined on $\mathcal{W}_k$. Thus, we define the discrete-time Lagrange function $L^k:\mathcal{W}_k\times \mathcal{U}_k \times \mathcal{W}_k \to \mathbb{R}$ by
\begin{align}
\label{defn:numerics:semidisclag}
\begin{split}
L^k(x_k,u_k,\lambda_k) := \sum_{m=1}^M \int_{t_{m-1}}^{t_m} \bar{J}(s,x_k,u_k)\,ds + \sum_{m=1}^M \big(\langle \dot{x}_k,\lambda_k\rangle_{I_m} &- \langle \bar{A}(x_k) - \bar{B}u_k-f,\lambda_k\rangle_{I_m}\big) \\+ \sum_{m=1}^{M}\langle [x_k]_{m-1},\lambda_{k,m-1}^+& \rangle +  \langle x_{k,0}^- -x_0,\lambda_{k,0}^-\rangle,
\end{split}
\end{align} where the jump terms $[x_k]_{m-1}$ capture possible discontinuities of the state. This Lagrange function is also well-defined for state and adjoint state belonging to the continuous function space $\mathcal{W}$ and on this space it coincides with the continuous Lagrangian defined in \eqref{defn:numerics:contlag}. For piecewise constant functions belonging to the space $\mathcal{W}_k$, the time derivative vanishes, whereas for functions continuous in time belonging to $\mathcal{W}$, the jump terms vanish. 

The discrete-time version for the state equation of \eqref{eq:numerics:optcond} reads
\begin{align}
\begin{split}
\label{eq:semidiscretestate}
\langle {L}^k_\lambda(x_k,u_k,\lambda_k),\varphi_k\rangle_{\mathcal{W}_k^*\times \mathcal{W}_k}=\sum_{m=1}^{M} \big(\langle \dot{x}_k,\varphi_k\rangle_{I_m} &- \langle \bar{A}(x_k) - \bar{B}u_k-f,\varphi_k \rangle_{I_m}\big)\\
+\sum_{m=1}^{M}\langle [x_k]_{m-1},\varphi_{k,m-1}^+& \rangle +  \langle x_{k,0}^- -x_0,\varphi_{k,0}^-\rangle = 0.
\end{split}
\end{align}
for  $\varphi_k\in \mathcal{W}_k$. Analogously, the discrete-time counterpart to the third equation of \eqref{eq:numerics:optcond}, is given by
\begin{align}
\label{eq:semidiscretegradient}
\langle {L}^k_u(x_k,u_k,\lambda_k),\varphi_k\rangle_{\mathcal{U}_k^*\times \mathcal{U}_k}=\sum_{m=1}^M \langle \bar{J}_u(\cdot,x_k,u_k) - \bar{B}^*\lambda_k,\varphi_k\rangle_{I_m} = 0
\end{align}
for $\varphi_k \in \mathcal{U}_k$.
Using integration by parts on each subinterval in the state equation \eqref{eq:semidiscretestate}, one can derive the adjoint equation as discrete-time counterpart to the first equation of \eqref{eq:numerics:optcond}, that is,
\begin{align}
\label{eq:semidiscreteadjoint}
\begin{split}
\langle{L}^k_x(x_k,u_k,\lambda_k),\varphi_k\rangle_{\mathcal{W}_k^*\times \mathcal{W}_k} =\sum_{m=1}^M \langle& \bar{J}_x(\cdot,x_k,u_k),\varphi_k\rangle_{I_m} +\sum_{m=1}^M \big(\langle -\dot{\lambda}_k - \bar{A}'(x_k)^*\lambda_k,\varphi_k\rangle_{I_m} \\&- ( [\lambda_k]_{m-1},\varphi^-_{k,m-1})\big) + \langle \lambda_{k,M}^-,\varphi_{k,M}^-\rangle=0
\end{split}
\end{align}
for all $\varphi_k\in \mathcal{W}_k$. The resulting time-stepping scheme is equivalent to an implicit Euler method if the temporal integrals are approximated via the box rule, cf.\ \cite[Section 3.4.1]{Meidner2008a} and thus inherits its A-stability.
\subsubsection*{Space discretization}
For spatial discretization we use piecewise affine linear continuous finite elements as presented in the standard literature \cite{braess1997,Ciarlet2002,Hackbusch1986}. To this end, we assign a regular triangulation $\mathcal{K}_h^m$ of the spatial domain $\Omega$ and corresponding conforming finite element spaces $V_h^m\subset V$ and ${U}^m_h\subset {U}$  to each interval $I_m$ and obtain the fully discrete spaces
\begin{align*}
\mathcal{W}_{kh} &:= \{v_{kh}\in L_2(0,T,H)\,|\, v_{kh\big|I_m}\in \mathcal{P}^0(I_m,V^m_h),\, m=1,\ldots,M,\, v_{kh}(0)\in V_h^0\},\\
\mathcal{U}_{kh} &:= \{u_{kh}\in L_2(0,T,{U})\,|\, u_{kh\big|I_m}\in \mathcal{P}^0(I_m,U^m_h),\, m=1,\ldots,M\}.
\end{align*}
Due to conformity of these spaces with respect to the discrete-time spaces, i.e., $\mathcal{W}_{kh}\subset \mathcal{W}_k$ and $\mathcal{U}_{kh}\subset \mathcal{U}_k$, the discrete-time Lagrangian \eqref{defn:numerics:semidisclag} is well-defined on $\mathcal{W}_{kh}\times \mathcal{U}_{kh}\times \mathcal{W}_{kh}$.

In order to allow full flexibility for the spatial adaptivity, it is possible that the triangulation $\mathcal{K}_h^m$ on the interval $I_m$ is different from the triangulation $\mathcal{K}_h^{m+1}$ on the interval $I_{m+1}$. In terms of numerical realization this leads to difficulties of efficiently evaluating the scalar product of basis elements of different time steps as needed for the assembly of the Euler step equations \eqref{eq:semidiscretestate} and \eqref{eq:semidiscreteadjoint}. In this work, we make use of the remedy presented in \cite{Schmich2008}, where the authors suggest the evaluation of scalar products on a common triangulation. 

We finally note that instead of an a priori control discretization as utilized here, a variational control discretization in the spirit of \cite{Hinze2005} can be included in the underlying a posteriori goal oriented error estimation methodology, cf.\ \cite[Section 3]{Meidner2007}. This variational discretization approach can lead to advantages regarding a priori estimates, in particular in the presence of control constraints. In this work we do not include control constraints, as in this case the necessary stability results for the analysis in \Cref{sec:secondary} and \Cref{sec:secondary_discrete} are not yet available. 
	If the control space $U$ is finite dimensional, i.e., e.g., $(Bu)(t)=\sum_{k=1}^K u_k(t)\chi_k$ with appropriate shape functions $\chi_k$, then the control space is fully discrete after time discretization. A discretization in space is not necessary in this case.
\subsubsection*{Goal oriented error estimation}
We will concisely introduce the concept of goal oriented error estimation for optimal control of parabolic PDEs based on the works \cite{Meidner2008a,Meidner2007,Meidner2008}. A comprehensive introduction to adaptive finite element methods for ODEs and PDEs with applications is given in the monograph \cite{Bangerth2003}. The main idea of goal oriented error estimation is to estimate and reduce the discretization error with respect to an arbitrary functional $I(x,u)$, called the quantity of interest (QOI). 

We follow the literature \cite{Meidner2008a,Meidner2007} and denote by $(x,u,\lambda)\in (\mathcal{W}\times \mathcal{U} \times \mathcal{W})$ a continuous solution of the extremal equations \eqref{eq:numerics:optcond}, by $(x_k,u_k,\lambda_k)\in (\mathcal{W}_k\times \mathcal{U}_k\times \mathcal{W}_k)$ and by $(x_{kh},u_{kh},\lambda_{kh})\in (\mathcal{W}_{kh}\times \mathcal{U}_{kh} \times \mathcal{W}_{kh})$ time and fully discrete solutions of the system described by \eqref{eq:semidiscretestate}, \eqref{eq:semidiscretegradient}, and \eqref{eq:semidiscreteadjoint}. The aim of goal oriented a posteriori error estimation is to derive error estimators $\eta_k$ and $\eta_h$ such that
\begin{align*}
I(x,u) - I(x_{kh},u_{kh})\approx \eta_k + \eta_h,
\end{align*}
where $\eta_k$ approximates the time discretization error and $\eta_h$ approximates the space discretization error. A detailed derivation of the estimators is performed in \cite[Chapter 6]{Meidner2008a} and \cite{Meidner2007}. We briefly recall the main steps for the convenience of the reader and for later use. For more details, the interested reader is referred to the references above. Note that as we chose classical a priori control discretization, the discretization error of the control variable is included in the terms $\eta_k$ and $\eta_h$. If one chose a different control discretization, one could denote this term separately, cf.\ \cite{Meidner2007}.

In order to obtain computable error estimators, besides the solution triple $\xi:=(x,u,\lambda)$, a second triple of variables $\chi:=(v,q,z)$ has to be considered, to which we will refer as secondary variables. If $I(x,u)$ does not involve point evaluations in time, these secondary variables solve on the continuous level the linear system
\begin{align}
\label{eq:secondaryvars_cont}
L''(\xi)\chi=({L}^k)''(\xi){\chi} &= -\begin{pmatrix}
I_x(x,u)\\
0\\
I_u(x,u)\\
0\\
0
\end{pmatrix}
&&\text{ in } \mathcal{W}^*\times H\times \mathcal{U}^*\times \mathcal{W}^*\times H ,
\intertext{on the discrete-time level the system}
\label{eq:secondaryvars_timedisc}
({L}^k)''(\xi_{k}){\chi_{k}} &= -\begin{pmatrix}
I_x(x_{k},u_{k})\\
0\\
I_u(x_{k},u_{k})\\
0\\
0
\end{pmatrix}
&&\text{ in } \mathcal{W}_{k}^*\times H\times \mathcal{U}_{k}^*\times \mathcal{W}_{k}^*\times H ,
\intertext{and on the fully discrete level the system}
\label{eq:secondaryvars_spacetimedisc}
({L}^k)''(\xi_{kh}){\chi_{kh}} &= -\begin{pmatrix}
I_x(x_{kh},u_{kh})\\
0\\
I_u(x_{kh},u_{kh})\\
0\\
0
\end{pmatrix}
&&\text{ in } \mathcal{W}_{kh}^*\times V_h^0\times  \mathcal{U}_{kh}^*\times  \mathcal{W}_{kh}^* \times V_h^M.
\end{align}
These equations are similar to the defining equation of a Lagrange-Newton step, where the derivative of the Lagrangian on the right-hand side is replaced by the derivative of the QOI.

With the continuous triples $\xi = (x,u,\lambda)$ and $\chi = (v,q,z)$ and the corresponding discrete counterparts at hand we define the residual of the first order optimality condition via
\begin{align*}
\rho^\lambda(x,u,\lambda)\varphi &:= \langle {L}^k_x(x,u,\lambda),\varphi\rangle_{\mathcal{W}_k^*\times \mathcal{W}_k},\\
\rho^u(x,u,\lambda)\varphi &:= \langle {L}^k_u(x,u,\lambda),\varphi\rangle_{\mathcal{U}_k^*\times \mathcal{U}_k},\\
\rho^x(x,u,\lambda)\varphi &:= \langle {L}^k_\lambda(x,u,\lambda),\varphi\rangle_{\mathcal{W}_k^*\times \mathcal{W}_k},
\end{align*}
and a residual involving the secondary variables $\chi = (v,q,z)$ via 
\begin{align*}
\rho^z(\xi,v,q,z)\varphi &:=  {L}^k_{\lambda x}(\xi)(z,\varphi)  +  {L}^k_{ux}(\xi)(q,\varphi)+ {L}^k_{xx}(\xi)(v,\varphi)+I_x(x,u)\varphi,\\
\rho^q(\xi,v,q,z)\varphi &:= {L}^k_{uu}(\xi)(q,\varphi) + {L}^k_{xu}(\xi)(v,\varphi) + {L}^k_{\lambda u}(\xi)(z,\varphi) + I_u(x,u)\varphi,\\
\rho^v(\xi,v,q)\varphi &:=  {L}^k_{x\lambda }(\xi)(v,\varphi) + {L}^k_{u\lambda}(\xi)(q,\varphi).
\end{align*}
With these residuals one obtains the time error representation formula
	\begin{align}
	\label{eq:timeerr}
	\begin{split}
	&I(x,u) - I(x_k,u_k)=\\  & \frac{1}{2} \big(\rho^\lambda(x_k,u_k,\lambda_k)(v-\underline{v}_k) + \rho^u(x_k,u_k,\lambda_k)(q-\underline{q}_k) + \rho^x(x_k,u_k)(z-\underline{z}_k) \\
	&+ \rho^z(\xi_k,v_k,q_k,z_k)(x-\underline{x}_k) +\rho^q(\xi_k,v_k,q_k,z_k)(u-\underline{u}_k)+ \rho^v(\xi_k,v_k,q_k)(\lambda-\underline{\lambda}_k)\big)\\
	&+ \mathcal{R}_k
	\end{split}
	\intertext{for $(\underline{v}_k,\underline{q}_k,\underline{z}_k),(\underline{x}_k,\underline{u}_k,\underline{\lambda}_k) \in \mathcal{W}_k\times \mathcal{U}_k \times \mathcal{W}_k$ arbitrary and the space error representation formula}
	\label{eq:spaceerr}
	\begin{split}
	&I(x_k,u_k) - I(x_{kh},u_{kh})  =\\
	&\qquad \qquad\frac{1}{2} \big(\rho^\lambda(x_{kh},u_{kh},\lambda_{kh})(v_k-\underline{v}_{kh}) + \rho^u(x_{kh},u_{kh},\lambda_{kh})(q_k-\underline{q}_{kh})\\ 
	&\qquad\qquad+ \rho^x(x_{kh},u_{kh})(z_k-\underline{z}_{kh}) 
	+ \rho^z(\xi_{kh},v_{kh},q_{kh},z_{kh})(x_k-\underline{x}_{kh})\\ 
	&\qquad\qquad+\rho^q(\xi_{kh},v_{kh},q_{kh},z_{kh})(u_k-\underline{u}_{kh})+ \rho^v(\xi_{kh},v_{kh},q_{kh})(\lambda_k-\underline{\lambda}_{kh})\big)\\
	&\qquad\qquad+ \mathcal{R}_h
	\end{split}
	\end{align}
	for $(\underline{v}_{kh},\underline{q}_{kh},\underline{z}_{kh}),(\underline{x}_{kh},\underline{u}_{kh},\underline{\lambda}_{kh}) \in \mathcal{W}_{kh}\times \mathcal{U}_{kh} \times \mathcal{W}_{kh}$. The remainder terms $\mathcal{R}_k$ and $\mathcal{R}_h$ depend on the third derivative of the (exterior) Lagrangian and are cubic in the error in primal and dual variables, i.e., $\mathcal{R}_k$ is cubic in $x-x_k$, $u-u_k$ etc.\ and $\mathcal{R}_h$ is cubic in $x_k-x_{kh}$, $u_k-u_{kh}$ and so forth. The arbitrary choice of the test functions 
originates from Galerkin orthogonality, cf.\ \cite[Proposition 4.1, Theorem 4.3]{Meidner2007}. The terms $v-\underline{v}_k$, $q-\underline{q}_k$, $z-\underline{z}_k$, $x-\underline{x}_k$, $u-\underline{u}_k$ and $\lambda-\underline{\lambda}_k$ are often called weights and need to be approximated to obtain computable error estimates as the solutions in the infinite-dimensional spaces, i.e., variables with no subscript or subscript $k$, are not at hand. Such an approximation can be performed by, e.g., interpolation or a higher order method. In this work we resort to the latter and adapt the method presented in \cite{Weiser2013} utilizing hierarchical error estimation. Having approximated the weights, we denote by $\eta_k$ and $\eta_h$ the approximations of \cref{eq:timeerr} and \cref{eq:spaceerr}, respectively. For simplicity, we localize the error indicators via the cell-wise contributions. For more involved localization methods, the reader is referred to \cite{Becker2001, Braak2003,Meidner2008} and \cite[Section 4.3]{Richter2015}.

\section{Exponential decay of continuous-time error indicators}
\label{sec:secondary}
Having introduced the concept of goal oriented error estimation, we will present a quantity of interest particularly well-suited for the adaptive solution of the optimal control problems in a Model Predictive Controller.
In every iteration of the MPC loop, the control on $[0,\tau]$ is used as feedback. Hence, we suggest using a truncation of the cost functional as a quantity of interest, namely
\begin{align}
\label{eq:mpcqoi}
I^{\tau}(x,u) := \int\limits_0^\tau \bar{J}(t,x,u)\,dt.
\end{align}
This specialized quantity of interest in goal oriented error estimation yields time and space grids, such that the error of the MPC feedback is small. In this part, we will rigorously prove that the error indicators $\eta_k$ and $\eta_h$ approximating the errors \eqref{eq:timeerr} and \eqref{eq:spaceerr} for the QOI \eqref{eq:mpcqoi} decay exponentially outside the interval $[0,\tau]$. First we observe that by the linear dependence, the error indicators inherit the behavior of the secondary variables. Thus, it suffices to analyze the behavior of the continuous version of these variables, i.e., $\chi=(v,q,z)$ defined in \eqref{eq:secondaryvars_cont} or the discrete-time version $\chi_k=(v_k,q_k,z_k)$ defined in \eqref{eq:secondaryvars_timedisc}. In these defining equations, we observe that the right-hand side depends on the derivatives of the QOI. In case of a QOI as defined in \eqref{eq:mpcqoi} these functionals only integrate over a small part of the time horizon if $\tau \ll T$. In the following sensitivity analysis, we will show that $\chi=(v,q,z)$ defined in \eqref{eq:secondaryvars_cont} respectively the discrete-time secondary variables $\xi_k=(v_k,q_k,z_k)$ defined in \eqref{eq:secondaryvars_timedisc} inherit the locality of the QOI in the sense that they are large on $[0,\tau]$ and small on $[\tau,T]$.

To derive sensitivity estimates, we make the following assumptions, where $(x,u)$ is an optimal solution to \cref{def:numerics:ocp}.
\begin{assumption}\hfill
	\label{as:main}
	\begin{itemize}
		\item There is a Banach space $(Y,\|\cdot\|_Y)$ and an operator $C\in L(L_2(0,T;V),L_2(0,T;Y))$ such that $L_{xx}(x,u)=C^*C$.
		\item There is an operator $R\in L(L_2(0,T;U),L_2(0,T;U))$ satisfying the coercivity condition $ \|Ru\|_{L_2(0,T;U)}\geq \alpha \|u\|_{L_2(0,T;U)}$ for $\alpha > 0$ such that $J_{uu}(x,u)=R^*R$.
		\item $A'(x)\in L(L_2(0,T;V), L_2(0,T;V^*))$.
		\item $(A'(x),C)$ is exponentially detectable in the sense that there is a feedback operator $K_C\in L(L_2(0,T;Y),L_2(0,T;V^*))$ and $\alpha > 0$ such that \begin{align*}
\hspace{1cm}-\langle (A'(x)+K_C C)v,v\rangle_{L_2(0,T;V^*) \times L_2(0,T;V)} \geq \alpha \|v\|_{L_2(0,T;V)}^2 \quad \text{for all}\quad v\in L_2(0,T;V)
		\end{align*}
		\item $(A'(x),B)$ is exponentially stabilizable in the sense that there is a feedback operator $K_B\in L(L_2(0,T;V),L_2(0,T;U))$ and $\alpha > 0$ such that \begin{align*}
			\hspace{1cm}-\langle (A'(x)+BK_B)v,v\rangle_{L_2(0,T;V^*) \times L_2(0,T;V)} \geq \alpha \|v\|_{L_2(0,T;V)}^2 \quad \text{for all} \quad v \in L_2(0,T;V).
		\end{align*}
		\item $\|A'(x)\|_{L(L_2(0,T;V),L_2(0,T;V^*))}, \|B\|_{L(L_2(0,T;U),L_2(0,T;V^*))},\|C\|_{L(L_2(0,T;V),L_2(0,T;Y))}$, and \\$\|R\|_{L(L_2(0,T;U))}$ can be bounded independently of $T$.
	\end{itemize}
\end{assumption}
The stabilizability notion of the last two assumptions was introduced in \cite[Definition 3.6]{Gruene2018c}. It allows for a straightforward derivation of stability estimates in $W([0,T])$ and is easy to verify via, e.g., generalized Poincar\'e or Friedrichs inequalities, cf.\ \cite[Example 3.7 and Example 3.8]{Gruene2018c}. In particular, a consequence of the above assumptions is the unique solvability of the linearized optimality conditions at the optimal solution, cf.\ \cite[Proof of Corollary 3.16]{Gruene2018c}.
\begin{theorem}
	\label{thm:numerics:scaling_cont}
	Let \cref{as:main} hold. Consider the QOI $I^\tau(x,u)$ defined in \eqref{eq:mpcqoi}. Let $\chi =(v,q,z)\in W([0,T])\times L_2(0,T;{U})\times W([0,T])$ solve \eqref{eq:secondaryvars_cont}, i.e.,
	\begin{align*}
	{L}''(x,u,\lambda)\chi = \begin{pmatrix}
	{L}_{xx}(x,u)&0&-\frac{d}{dt}-A'(x)^*\\
	0&0&E_T\\
	0&J_{uu}(x,u)&-B^*\\
	\frac{d}{dt}-A'(x)&-B&0\\
	E_0&0&0
	\end{pmatrix}
	\begin{pmatrix}
	v\\q\\z
	\end{pmatrix}
	= -\begin{pmatrix}
	I^\tau_x(x,u)\\
	0\\
	I^\tau_u(x,u)\\
	0\\
	0
	\end{pmatrix}.
	\end{align*}
	Then, defining
	\begin{align*}
	M:=
	\begin{pmatrix}
	C^*C&-\frac{d}{dt}-A'(x)^*\\
	0&E_T\\
	\frac{d}{dt}-A'(x)&-BJ_{uu}(x,u)^{-1}B^*\\
	E_0&0
	\end{pmatrix}
	\end{align*}
	the solution operator norm $\|M^{-1}\|_{L((L_2(0,T;V^*)\times H)^2, W([0,T])^2)}$ can be bounded independently of $T$. Further, for all $\mu>0$ satisfying 
	\begin{align*}
	\mu < \frac{1}{\|M^{-1}\|_{L((L_2(0,T;V^*)\times H)^2, W([0,T])^2)}}
	\end{align*}
	there is a constant $c(\tau)> 0$ independent of $T$ such that
	\begin{align}
	\label{eq:numerics:upperbound}
	\left\|e^{\mu t} \begin{pmatrix}
	v\\q\\z
	\end{pmatrix}\right\|_{W([0,T])\times L_2(0,T;{U}) \times W([0,T])}\leq c(\tau)\big(\|{J}_x(x,u)\|_{L_2(0,\tau;V^*)}+ \|{J}_u(x,u)\|_{L_2(0,\tau;U)}\big).
	\end{align}
\end{theorem}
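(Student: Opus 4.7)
The plan is to first eliminate the control variable $q$ by its algebraic equation, reducing the $5\times 3$ saddle-point system to the coupled forward–backward system $M(v,z)=F$ written in the statement; then to establish a $T$-uniform bound on $M^{-1}$ by invoking the linear-quadratic Riccati framework developed under the stabilizability/detectability hypotheses of \cref{as:main}; and finally to lift this bound to the exponentially weighted norm via an isomorphism/perturbation argument, exploiting that $F$ is supported in $[0,\tau]$.

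For the first step, I would read off from the third row that $J_{uu}(x,u)q - B^*z = -I^\tau_u(x,u)$, which is solvable for $q = J_{uu}^{-1}(B^*z - I^\tau_u)$ because $J_{uu}=R^*R$ is coercive by \cref{as:main}. Substituting this identity into the state equation produces exactly the reduced system $M(v,z)=F$ of the theorem, with
\begin{align*}
F = \bigl(-I^\tau_x(x,u),\ 0,\ -B\,J_{uu}(x,u)^{-1}I^\tau_u(x,u),\ 0\bigr).
\end{align*}
Since $I^\tau_x$ and $I^\tau_u$ agree with $J_x(x,u)$ and $J_u(x,u)$ on $[0,\tau]$ and vanish on $(\tau,T]$, the source $F$ is supported in $[0,\tau]$ and its norm is dominated by $\|J_x(x,u)\|_{L_2(0,\tau;V^*)}+\|J_u(x,u)\|_{L_2(0,\tau;U)}$ up to the fixed norms of $B$ and $J_{uu}^{-1}$.

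The second step is the $T$-uniform invertibility of $M$. Here I would observe that $M$ is precisely the first-order optimality operator of the auxiliary linear-quadratic problem $\min_{v,q}\tfrac12\|Cv\|^2_{L_2(0,T;Y)}+\tfrac12\|Rq\|^2_{L_2(0,T;U)}$ subject to the linearized dynamics $v'=A'(x)v+Bq$, $v(0)=0$, with sources driven by $F$. The exponential stabilizability of $(A'(x),B)$ and exponential detectability of $(A'(x),C)$ in \cref{as:main} are precisely the hypotheses under which the Riccati-type theory of \cite[Theorem~3.21]{Gruene2018c} furnishes a uniform a priori estimate for the solution operator, yielding $\|M^{-1}\|\le c$ with $c$ independent of $T$. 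This establishes the first claim of the theorem.

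For the final step, I would introduce the isomorphism $\Phi_\mu \colon w\mapsto e^{\mu\cdot}w$ acting pointwise on $W([0,T])$ and on the target space $L_2(0,T;V^*)\times H$. A direct computation gives $\Phi_\mu \tfrac{d}{dt}\Phi_\mu^{-1}=\tfrac{d}{dt}-\mu I$, so $M_\mu := \Phi_\mu M\Phi_\mu^{-1}$ differs from $M$ only by $\mp\mu I$ in the two derivative rows, a perturbation of operator norm at most $\mu$. For $\mu < 1/\|M^{-1}\|$, the Neumann series then yields a uniformly bounded $M_\mu^{-1}$. Applied to $\Phi_\mu F$, which is supported in $[0,\tau]$ and satisfies $\|\Phi_\mu F\|\le e^{\mu\tau}\|F\|$, this produces the desired weighted bound on $(v,z)$, the factor $e^{\mu\tau}$ being absorbed into $c(\tau)$; the analogous bound on $q$ follows from $q=J_{uu}^{-1}(B^*z-I^\tau_u)$, since $z$ satisfies the weighted estimate and $I^\tau_u$ is locally supported. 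The hard part will be Step two: verifying carefully that the auxiliary linear-quadratic problem fits the Riccati framework of \cite{Gruene2018c} in the present (possibly non-autonomous) setting and that all operators map between $W([0,T])^2\to(L_2(0,T;V^*)\times H)^2$ with constants genuinely independent of $T$; the subsequent weight-shift argument is then essentially a standard perturbation calculation.
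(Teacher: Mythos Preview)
The proposal is correct and follows essentially the same route as the paper: eliminate $q$ to obtain the reduced system $M(v,z)=F$, invoke the $T$-uniform bound on $M^{-1}$ from \cite{Gruene2018c} (the paper cites Corollary~3.16 there rather than a Riccati theorem and treats this as a one-line black box), then conjugate by $e^{\mu t}$, observe that this perturbs $M$ only in the derivative rows by $\pm\mu I$, and close with a Neumann series estimate on the scaled, $[0,\tau]$-supported source. Your worry that Step~2 is the hard part is misplaced---the paper dispatches it by citation, and the weight-shift/Neumann argument you outline is exactly the substance of the proof.
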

\begin{proof}
	We first rewrite the system after elimination of the secondary control via $q=\!J_{uu}^{-1}(x,u)(B^*z- I^\tau_u(x,u))$ as
	\begin{align*}
	\begin{pmatrix}
	C^*C&-\frac{d}{dt}-A'(x)^*\\
	0&E_T\\
	\frac{d}{dt}-A'(x)&-BJ_{uu}(x,u)^{-1}B^*\\
	E_0&0
	\end{pmatrix}
	\begin{pmatrix}
	v\\z
	\end{pmatrix}
	= -\begin{pmatrix}
	I^\tau_x(x,u)\\
	0\\
	BJ_{uu}(x,u)^{-1}I^\tau_u(x,u)\\
	0.
	\end{pmatrix}.
	\end{align*}
	The bound on $M^{-1}$ follows by \cite[Corollary 3.16]{Gruene2018c}. To obtain the bound on the scaled variables, we set $s(t)=e^{\mu t}$ for $\mu>0$. Setting $\varepsilon:=-\left(I^\tau_x(x,u),
	0,
	BJ_{uu}(x,u)^{-1}I^\tau_u(x,u),
	0\right)\in (L_2(0,T;V^*)\times H)^2$, a straightforward computation and application of the product rule yields
	\begin{align*}
	M\begin{pmatrix}
	v\\z
	\end{pmatrix} &= \varepsilon\\
	(M-\mu P)\left(s\begin{pmatrix}
	v\\z
	\end{pmatrix}\right)&=s\varepsilon\\
	(I-\mu M^{-1}P)\left(s\begin{pmatrix}
	v\\z
	\end{pmatrix}\right)&=M^{-1}s\varepsilon
	\end{align*}
	where $P:=\begin{psmallmatrix}
	0&-I\\
	0&0\\
	I&0\\
	0&0
	\end{psmallmatrix}$. Thus, choosing $\mu < \frac{1}{\|M^{-1}\|_{L((L_2(0,T;V^*)\times H)^2, W([0,T])^2)}}$ and setting \\$\beta = \mu \|M^{-1}\|_{L((L_2(0,T;V^*)\times H)^2, W([0,T])^2)}<1$, a standard Neumann series argument, cf.\ \cite[Theorem 2.14]{Kress1989} yields, 
	\begin{align*}
	\left\|s\begin{pmatrix}
	v\\z
	\end{pmatrix}\right\|_{W([0,T])^2} \leq \frac{\|M^{-1}\|_{L((L_2(0,T;V^*)\times H)^2, W([0,T])^2)}}{1-\beta}\|s\varepsilon\|_{(L_2(0,T;V^*)\times H)^2}.
	\end{align*}
	Further estimating the right-hand side we obtain
	\begin{align*}
	&\norm{\int_0^\tau e^{\mu t} \bar{J}_x(t,x,u) \cdot \,dt}_{L_2(0,T;V^*)} + \norm{BJ_{uu}(x,u)^{-1}\int_0^\tau e^{\mu t}\bar{J}_u(t,x,u) \cdot \,dt}_{L_2(0,T;U)}
	\\&\leq e^{\mu \tau}\left(1+\|BJ_{uu}^{-1}\|_{L(L_2(0,T;U),L_2(0,T;V^*))}\right)\left(\|{J}_x(x,u)\|_{L_2(0,\tau;V^*)} + \|{J}_u(x,u)\|_{L_2(0,\tau;U)}\right),
	\end{align*}
	which concludes the proof.
\end{proof}
We will derive a similar estimate in \Cref{sec:secondary_discrete} for the discrete-time secondary variables $v_k$, $q_k$ and $z_k$ and the fully discrete secondary variables $v_{kh}$, $q_{kh}$ and $z_{kh}$.
Further, we will show in \cref{rem:dependenceonT} that the term on the right-hand side, i.e., $\|{J}_x(x,u)\|_{L_2(0,\tau;V^*)} + \|{J}_u(x,u)\|_{L_2(0,\tau;U)}$ is bounded independently of the time horizon $T$ if a turnpike property holds. We give a short interpretation of the estimate \eqref{eq:numerics:upperbound}: As the scaling $e^{\mu t}$ grows exponentially in time, the variables $(v,q,z)$ have to decay exponentially in time such that the product is bounded (almost) independently of the end time $T$. Thus, the secondary variables $(v,q,z)$ inherit the locality of the QOI $I^\tau(x,u)$ in the sense that they are also localized on $[0,\tau]$. Due to the linear dependence, this also carries over to the error indicators approximating \eqref{eq:timeerr} and \eqref{eq:spaceerr}. 
\begin{remark}
	\label{rem:dependenceonT}
	We will briefly give sufficient conditions under which the upper bound in \eqref{eq:numerics:upperbound} can be shown to be bounded independently of $T$ in the case of a linear quadratic problem. It turns out that when a turnpike property holds, the initial part of the optimal solution is only affected by the horizon negligibly, if the horizon is large. 
	Consider a time horizon  $T>0$  and the linear quadratic optimal control problem
	\begin{align*}
	\min_{(x,u)} \frac{1}{2} \int_0^T \|C(x(t)-x_\text{d})\|_Y^2 + \|R(u(t)-u_\text{d})\|_U^2 \,dt \qquad s.t.\ \dot{x}=Ax+ Bu, \quad x(0)=x_0.
	\end{align*} 
	Suppose that the involved operators satisfy the stabilizability assumptions of \cref{as:main}. Then it follows by \cite[Corollary 5.3]{Gruene2018c} that the state and control satisfy the turnpike estimate
	\begin{align}
	\label{eq:tpestimate}
	\norm{\begin{pmatrix}
		x(t)-\bar{x},
		u(t)-\bar{u}
		\end{pmatrix}}_{H\times U}\leq c(e^{-\mu t} + e^{-\mu (T-t)})\left(\|\bar{\lambda}\|+\|\bar{x}-x_0\|\right)
	\end{align}
	for a.e.\ $t\in [0,T]$, where $(\bar{x},\bar{u})$ denotes the optimal solution of the corresponding steady state problem, $\bar{\lambda}$ is the corresponding adjoint state and $c\geq 0$ is independent of $T$. Hence, in particular we have
	\begin{align}
	\label{eq:pwestimate}
	\norm{\begin{pmatrix}
		x(t),
		u(t)
		\end{pmatrix}}_{H\times U} \leq c
	\end{align}
	for a.e.\ $t\in [0,T]$, i.e., boundedness independently of $T$. Thus, computing 
	\begin{align*}
	\|J_x(x,u)\|_{L_2(0,\tau;V^*)}&= \sup_{\|v\|_{L_2(0,T;V)}=1} \int_0^\tau \langle C^*C(x(t)-x_d),v(t)\rangle \,dt \\&\leq \sup_{\|v\|_{L_2(0,T;V)}=1} \|C\|^2_{L(X,Y)} \int_0^\tau \|x(t)-x_d\|^2\,dt  \underbrace{\int_0^\tau \|v(t)\|^2\,dt }_{\leq \|v\|^2_{L_2(0,T;V)=1}}
	\end{align*}
	and analogously for the control we conclude
	\begin{align*}
	\int_0^\tau \|x(t)-x_\text{d}\|+\|u(t)-u_\text{d}\|_U\,dt \leq \tau c_1 \|(x,u)\|_{C(0,T;X)\times L_\infty(0,\tau;U)} + c_2
	\end{align*}
	with $c_1,c_2\geq 0 $ independent of $T$. Hence, together with \eqref{eq:pwestimate},
	\begin{align*}
	\|{J}_x(x,u)\|_{L_2(0,\tau;V^*)} + \|{J}_u(x,u)\|_{L_2(0,\tau;U)} = \int_0^\tau \langle C^*C(x(t)-x_d),\cdot\rangle  + \langle R^*R(u(t)-u_d),\cdot\rangle \,dt \leq c
	\end{align*}
	with $c\geq 0$ independent of $T$.
	Finally we note that the steady state turnpike assumed of \eqref{eq:tpestimate} can be replaced by a dynamic turnpike concept and the proof remains valid. In particular, for time-varying problems in discrete time a similar property was proven in \cite[Theorem 3]{Gruene}.
\end{remark}
\section{Exponential decay of discrete-time error indicators}
\label{sec:secondary_discrete}
The result of \cref{thm:numerics:scaling_cont} does not immediately carry over to the discrete-time secondary variables as defined in \eqref{eq:secondaryvars_timedisc} due to the nonconformity of the discrete-time ansatz space. Thus, we will give a separate proof of this matter in the following. To this end, we introduce a suitable function space for scaled functions of $\mathcal{W}_k$ which are not necessarily piecewise constant in time. This will be important as we will deal with scaled piecewise constant functions in the discrete-time counterpart of \cref{thm:numerics:scaling_cont}.
\begin{definition}
	\label{defn:A}
	We define the space	of functions that are weakly differentiable on every subinterval via
	\begin{align*}
	W^M([0,T]) := \{v\in L_2(0,T;V)\,|\,v_{\big| I_m} \in W([t_{m-1},t_m]),\, m=1,\ldots, M,\,v(0)\in H\}
	\end{align*}
	and endow it with the natural norm
	\begin{align*}
	\|v\|_{W^M([0,T])} = \sum_{m=1}^{M} \left(\|v\|_{W([t_{m-1},t_{m}])} + \|v_{m-1}^- - v_{m-1}^+\|\right)+ \|v(0)\|.
	\end{align*}
	Additionally, we define linear operators ${\Lambda}^k,\Lambda^{k,-}\colon W^M([0,T])\to W^M([0,T])^*$ via the relations
	\begin{align*}
	\langle\Lambda^kv,\varphi&\rangle_{W^M([0,T])^* \times W^M([0,T])} := \\
	&\sum_{m=1}^{M} \left(\langle \dot{v},\varphi\rangle_{I_m}+ \langle[v]_{m-1},\varphi_{m-1}^+ \rangle\right) -  \langle {A}'(x)v,\varphi\rangle_{L_2(0,T;V^*) \times L_2(0,T;V)} + \langle v_{0}^-,\varphi_{0}^-\rangle\\
	\langle \Lambda^{k,-}v,&\varphi\rangle_{W^M([0,T])^* \times W^M([0,T])} := \\
	&-\sum_{m=1}^{M} \left(\langle \dot{v},\varphi\rangle_{I_m}+\langle [v]_{m-1},\varphi_{m-1}^- \rangle\right) -  \langle A'(x)^*v,\varphi\rangle_{L_2(0,T;V^*) \times L_2(0,T;V)}  +  \langle v_{M}^-,\varphi_{M}^-\rangle.
	\end{align*}
\end{definition}
Note that $\|\cdot\|_{W^M([0,T])}$ is equivalent to $\|\cdot\|_{W([0,T])}$ on $W([0,T])$. Further it is clear that $\mathcal{W}_k\hookrightarrow W^M([0,T])$ and that $W^M([0,T])$ with the norm defined above is a Banach space. Testing of the initial respectively terminal condition is included in the operators $\Lambda^k$ respectively $\Lambda^{k,-}$ due to the terms $\langle v_0^-,\varphi_0^-\rangle$ and $\langle v_M^-,\varphi_M^-\rangle$.
We first employ a $T$-independent invertibility result for the discrete-time operator occurring in \eqref{eq:secondaryvars_timedisc}.
To this end, we note that $L_{xx}=L^k_{xx}$ and $L_{uu}=L^k_{uu}$, i.e., the second derivatives with respect to the state and control of the continuous and discrete-time Lagrange function coincide. This is because the time derivative and the jump terms enter the Lagrange function in a linear way, i.e., they vanish in the second derivative.
\begin{theorem}
	\label{thm:operatornorm}
	If \cref{as:main} holds, then the inverse of the operator
	\begin{align*}
	M^k:=\begin{pmatrix}
	L_{xx}(x,u)&\Lambda^{k,-}\\
	\Lambda^k&-BJ_{uu}(x,u)^{-1}B^*
	\end{pmatrix}
	\end{align*}
	can be bounded by
	\begin{align*}
	\|(M^k)^{-1}\|_{L((L_2(0,T;V^*)\times H)^2, W^M([0,T])^2)} \leq c,
	\end{align*}
	where $c\geq 0$ is a constant independent of $T$.
\end{theorem}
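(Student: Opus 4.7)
The plan is to mirror the $T$-independent invertibility argument that establishes the continuous-time bound on $M^{-1}$ (the analogue of \cite[Corollary 3.16]{Gruene2018c} used in the proof of \cref{thm:numerics:scaling_cont}), but to carry it out on the enlarged space $W^M([0,T])$, which is designed precisely so that the nonconforming discrete-time problem can be treated by the same functional-analytic machinery. The key structural observation is that $\Lambda^k$ and $\Lambda^{k,-}$ satisfy a discrete integration-by-parts identity: for $v,\varphi\in W^M([0,T])$, a direct expansion of the jump terms and endpoint evaluations gives
\begin{align*}
\langle \Lambda^k v,\varphi\rangle + \langle v,\Lambda^{k,-}\varphi\rangle = \langle v_0^-,\varphi_0^-\rangle + \langle v_M^-,\varphi_M^-\rangle,
\end{align*}
so that $(\Lambda^k,\Lambda^{k,-})$ plays the role of $(\tfrac{d}{dt}-A'(x),\, -\tfrac{d}{dt}-A'(x)^*)$ in the continuous setting.

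The first step is to rewrite the equation $M^k(v,z)^\top = (\varepsilon_1,h_1,\varepsilon_2,h_2)^\top$ as a coupled saddle-point system for $(v,z)\in W^M([0,T])^2$ and to test the primal equation with $v$ and the adjoint equation with $z$, using the above integration-by-parts formula to cancel the cross-terms up to initial- and terminal-time boundary contributions. The factorization $L_{xx}=C^*C$ and the coercivity of $J_{uu}=R^*R$ then yield a first estimate controlling $\|Cv\|_{L_2(0,T;Y)}$ and $\|B^*z\|_{L_2(0,T;U)}$ (plus the endpoint terms absorbing $h_1,h_2$) by $\|\varepsilon_1\|$ and $\|\varepsilon_2\|$.

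In a second step I would upgrade these partial estimates to full $W^M([0,T])$-norm bounds by inserting the stabilizing and detecting feedbacks $K_B, K_C$ from \cref{as:main}. Rewriting $\Lambda^k v = (\Lambda^k + BK_B)v - BK_B v$ and testing with $v$, the stabilizability inequality gives $\alpha\|v\|_{L_2(0,T;V)}^2$ plus a perturbation that is absorbed using the previously obtained bound on $\|B^*z\|$, and similarly with $(\Lambda^{k,-} + C^*K_C^*)z$ and the detectability inequality for the adjoint. Once $\|v\|_{L_2(0,T;V)}$ and $\|z\|_{L_2(0,T;V)}$ are controlled, the bounds on $\|\Lambda^k v\|$ and $\|\Lambda^{k,-} z\|$ in $L_2(0,T;V^*)$, and then on the jump and endpoint contributions defining the $W^M([0,T])$-norm, follow directly from the equations themselves. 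Because all constants depend only on $\alpha$ and $\|K_B\|,\|K_C\|,\|B\|,\|C\|,\|R^{-1}\|$, none of which depend on $T$, this yields the claimed uniform bound.

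The main obstacle is to carry out the feedback decomposition correctly in the nonconforming setting: the insertions $(\Lambda^k + BK_B)v$ and $(\Lambda^{k,-}+C^*K_C^*)z$ must respect the jump-plus-endpoint structure of $\Lambda^k,\Lambda^{k,-}$, and the discrete integration-by-parts identity has to be used so that the additional boundary terms either telescope or are dominated by $\|h_1\|,\|h_2\|$. A secondary, more technical point is checking that the equivalence of $\|\cdot\|_{W^M([0,T])}$ with the $W([t_{m-1},t_m])$-norms plus jumps is strong enough to conclude boundedness of $(M^k)^{-1}$ into $W^M([0,T])^2$ once $\Lambda^k v, \Lambda^{k,-}z\in L_2(0,T;V^*)$ and the endpoint traces are controlled; this is where the explicit definition of the $W^M$-norm pays off.
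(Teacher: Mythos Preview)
Your overall strategy coincides with the paper's: test the state and adjoint equations with $v$ and $z$, exploit a discrete integration-by-parts identity to control $\|Cv\|_{L_2(0,T;Y)}$ and $\|B^*z\|_{L_2(0,T;U)}$, then upgrade to full $L_2(0,T;V)$-bounds by inserting the feedbacks from \cref{as:main}, and finally recover the $W^M$-norm from the equations. Two points, however, need correction.

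First, with the definitions in \cref{defn:A} the endpoint terms $\langle v_0^-,\varphi_0^-\rangle$ and $\langle v_M^-,\varphi_M^-\rangle$ are already built into $\Lambda^k$ and $\Lambda^{k,-}$, so the integration-by-parts identity is simply $\langle\Lambda^{k,-}z,v\rangle=\langle\Lambda^kv,z\rangle$ (this is what the paper proves explicitly). Your stated identity with the extra boundary terms on the right is not consistent with these definitions.

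Second, and more substantively, your feedback assignment in the second step is swapped. Inserting $BK_B$ into the state equation and testing with $v$ produces the perturbation $\langle BK_Bv,v\rangle$, which involves $v$ only and cannot be ``absorbed using the previously obtained bound on $\|B^*z\|$'' as you claim; likewise $C^*K_C^*$ in the adjoint equation yields a term controlled by $\|Cz\|$, which you do not have. The correct pairing, and the one the paper uses, is dictated by what the first step delivers: add $K_CC$ to the state equation so that the perturbation $\langle K_CCv,v\rangle$ is bounded by $\|Cv\|\,\|v\|$, and add $(BK_B)^*=K_B^*B^*$ to the adjoint equation so that the perturbation is bounded by $\|B^*z\|\,\|z\|$. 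With this swap your argument goes through and matches the paper's proof.
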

\begin{proof}
	As $L_{xx}(x,u)=C^*C$ by \cref{as:main}, we consider the system
	\begin{align}
	\label{eq:system}
	\begin{pmatrix}
	C^*C&\Lambda^{k,-}\\
	\Lambda^k&-BJ_{uu}(x,u)^{-1}B^*\\
	\end{pmatrix}
	\begin{pmatrix}
	v\\ z
	\end{pmatrix}
	=\begin{pmatrix}
	(l_1,z_T)\\(l_2,v_0)
	\end{pmatrix}\in (L_2(0,T;V^*)\times H)^2
	\end{align}
	for $l_1,l_2\in L_2(0,T;V^*)$ and $z_T,v_0 \in H$.
	First, we test the state equation, i.e., the second equation of \eqref{eq:system} with $(v,v_0)$ and obtain
	\begin{align*}
	\sum_{m=1}^M \left(\langle \dot{v},v\rangle_{I_m} + \langle [v]_{m-1},v^+_{m-1}\rangle\right) &+  \|v_0^-\|^2  - \langle A'(x)v+BJ_{uu}(x,u)^{-1}B^*z,v\rangle_{L_2(0,T;V^*) \times L_2(0,T;V)} \\&=\langle l_2,v\rangle_{L_2(0,T;V^*) \times L_2(0,T;V)} + \|v_0\|^2
	\end{align*}
	and compute for the first three terms with the formula $\langle \dot{v},v\rangle_{L_2(0,T;V^*)\times L_2(0,T;V)}=\frac12(\|v(T)\|^2-\|v(0)\|^2)$ for any $v\in W([0,T])$, cf.\ \cite[Proposition 23.23]{Zeidler19902a} and the definition of the jump terms $[v]_{m}:=v_m^+-v_m^-$ applied on every subinterval that
	\begin{align*}
	\sum_{m=1}^M &\left(\langle \dot{v},v\rangle_{I_m} + \langle [v]_{m-1},v^+_{m-1}\rangle\right) +  \|v_0^-\|^2 \\ &= \sum_{m=1}^M\left(\frac{1}{2}\|v_m^-\|^2 - \frac{1}{2}\|v_{m-1}^+\|^2 +\|v_{m-1}^+\|^2-\langle v_{m-1}^-,v_{m-1}^+\rangle\right) + \|v_0^-\|^2\\
	&= \sum_{m=1}^{M}\left(\frac{1}{2}\|v_m^-\|^2 - \langle v_{m-1}^-,v_{m-1}^+\rangle + \frac{1}{2}\|v_{m-1}^+\|^2\right) + \|v_0^-\|^2\\
	&=\sum_{m=1}^M \left(\frac{1}{2}\|v_{m-1}^-\|^2 - \langle v_{m-1}^-,v_{m-1}^+\rangle + \frac{1}{2}\|v_{m-1}^+\|^2\right) +  \frac{1}{2}\left(\|v_M^-\|^2 + \|v_0^-\|^2\right)\\
	&=\sum_{m=1}^M \frac{1}{2}\|v_{m-1}^- -v_{m-1}^+\|^2 +  \frac{1}{2}\left(\|v_M^-\|^2 + \|v_0^-\|^2\right).
	\end{align*}
	Estimating 
		\begin{align*}
		\langle BJ_{uu}(x,u)^{-1}B^*z,v\rangle_{L_2(0,T;V^*)}  \leq \|BJ_{uu}(x,u)^{-1}\|_{L(L_2(0,T;U),L_2(0,T;V^*))}\|B^*z\|_{L_2(0,T;U)}\|v\|_{L_2(0,T;U)}
		\end{align*} and adding the stabilizing feedback $K_C$ from \cref{as:main} we obtain
	\begin{align*}
	\sum_{m=1}^M &\frac{1}{2}\|v_{m-1}^- -v_{m-1}^+\|^2 +  \frac{1}{2}\left(\|v_M^-\|^2 + \|v_0^-\|^2\right) - \langle (A'(x)+K_CC)v,v\rangle_{L_2(0,T;V^*)\times L_2(0,T;V)} \\&\leq c\left(\|C v\|_{L_2(0,T;Y)}+ \|B^*z\|_{L_2(0,T;U)} + \|l_2\|_{L_2(0,T;V^*)}\right)\|v\|_{L_2(0,T;V)} + \|v_0\|^2.
	\end{align*}
	Hence, by $L_2(0,T;V)$-ellipticity of $-(A'(x)+K_CC)$, we get
	\begin{align}
	\label{eq:vest}
	\begin{split}
	\sum_{m=1}^M &\frac{1}{2}\|v_{m-1}^- -v_{m-1}^+\|^2 +  \frac{1}{2}\left(\|v_M^-\|^2 + \|v_0^-\|^2\right) + \|v\|^2_{L_2(0,T;V)} \\&\leq c\left(\|C v\|_{L_2(0,T;Y)}^2 + \|B^*z\|_{L_2(0,T;U)}^2 + \|l_2\|_{L_2(0,T;V^*)}^2 + \|v_0\|^2\right).
	\end{split}
	\end{align}
	Analogously, we test the adjoint equation with $z$ and compute
	\begin{align*}
	-\sum_{m=1}^M&\left( \langle \dot{z},z\rangle_{I_m} + \langle [z]_{m-1},z_{m-1}^-\rangle \right) + \|z_M^-\|^2\\
	&=-\sum_{m=1}^M \left(\frac{1}{2}\|z_m^-\|^2 - \frac{1}{2}\|z_{m-1}^+\|^2 +\langle z_{m-1}^+,z_{m-1}^-\rangle - \|z_{m-1}^-\|^2\right) + \|z_M^-\|^2\\
	&= \|z_0^-\|^2 + \sum_{m=1}^{M}\left(\frac{1}{2}\|z_m^-\|^2  - \langle z_{m-1}^+,z_{m-1}^-\rangle + \frac{1}{2}\|z_{m-1}^+\|^2 \right)\\
	&= \frac{1}{2}\left(\|z_0^-\|^2 + \|z_M^-\|^2\right) + \sum_{m=1}^M \frac{1}{2}\|z_{m-1}^- - z_{m-1}^+\|^2
	\end{align*}
	and thus, analogously to the state using stabilizability of $(A'(x),B)$ in the sense of \cref{as:main} we get for the adjoint that
	\begin{align}
	\label{eq:zest}
	\begin{split}
	\sum_{m=1}^M \frac{1}{2}\|z_{m-1}^- -z_{m-1}^+\|^2 &+  \frac{1}{2}\left(\|z_M^-\|^2 + \|z_0^-\|^2\right) + \|z\|^2_{L_2(0,T;V)} \\&\leq c(\|Cv\|_{L_2(0,T;Y)}^2 + \|B^*z\|_{L_2(0,T;U)}^2 + \|l_1\|_{L_2(0,T;V^*)}^2+\|z_T\|^2).
	\end{split}
	\end{align}
	It remains to estimate the term $\|Cv\|_{L_2(0,T;Y)}^2 + \|B^*z\|_{L_2(0,T;U)}^2$. To this end, we test the first equation of \eqref{eq:system} with $v$, the second equation of \eqref{eq:system} with $z$, subtract the latter from the former, use $J_{uu}=R^*R$ and invertibility of $R$ and obtain
	\begin{align}
	\nonumber
	&\|Cv\|_{L_2(0,T;Y)}^2 + \|B^*z\|_{L_2(0,T;U)}^2\\\label{eq:CvBzest}&\qquad \leq |\langle\Lambda^{k,-}z,v\rangle_{W^M([0,T])^*\times W^M([0,T]) }-\langle\Lambda^{k}v,z\rangle_{W^M([0,T])^*\times W^M([0,T]) }| \\\nonumber &+ (\|(l_1,z_T)\|_{L_2(0,T;V^*)\times H}+ \|(l_2,v_0)\|_{L_2(0,T;V^*)\times H})\left(\|v\|_{L_2(0,T;V)}+\|v_0^-\|+\|z\|_{L_2(0,T;V)}+\|z_M^-\|\right)
	\end{align}
	We proceed to show that $\langle\Lambda^{k,-}z,v\rangle_{W^M([0,T])^*\times W^M([0,T]) } = \langle \Lambda^kv,z\rangle_{W^M([0,T])^*\times W^M([0,T]) }$.
	\begin{align*}
	&\langle\Lambda^{k,-}z,v\rangle_{W^M([0,T])^*\times W^M([0,T])}\\
	&=-\sum_{m=1}^{M} \left(\langle z',v\rangle_{I_m}+\langle [z]_{m-1},v_{m-1}^- \rangle\right) +  \langle A'(x)^*z,v\rangle_{L_2(0,T;V^*)\times L_2(0,T;V)}  +  \langle z_{M}^-,v_{M}^-\rangle\\
	&=\sum_{m=1}^{M} \left(\langle z,v'\rangle_{I_m} -\langle z_m^-,v_m^-\rangle +\langle z_{m-1}^+,v_{m-1}^+\rangle -\langle z_{m-1}^+-z_{m-1}^-,v_{m-1}^- \rangle\right) \\&\qquad\qquad\qquad\qquad \qquad +  \langle A'(x)v,z\rangle_{L_2(0,T;V^*)\times L_2(0,T;V)}  +  \langle z_{M}^-,v_{M}^-\rangle\\
	&= \sum_{m=1}^{M} \left(\langle z,v'\rangle_{I_m}+ \langle z_{m-1}^+,v_{m-1}^+-v_{m-1}^-\rangle\right) + \langle A'(x)v,z\rangle_{L_2(0,T;V^*)\times L_2(0,T;V)} + \langle z_0^-,v_{0}^-\rangle\\ &= \langle \Lambda^kv,z\rangle_{W^M([0,T])^*\times W^M([0,T])}.
	\end{align*}
	The interested reader is referred to a similar result in \cite[Proposition 3.6]{Schiela2013} in a continuous-time setting.	Thus, together with \eqref{eq:vest}, \eqref{eq:zest}, and \eqref{eq:CvBzest} we obtain with $c\geq 0$ independent of $T$ that
	\begin{align}
	\label{eq:productspaceest}
	\begin{split}
	\sum_{m=1}^M &\frac{1}{2}\|v_{m-1}^- -v_{m-1}^+\|^2 +  \frac{1}{2}\left(\|v_M^-\|^2 + \|v_0^-\|^2\right) + \sum_{m=1}^M \frac{1}{2}\|z_{m-1}^- -z_{m-1}^+\|^2 \\&+  \frac{1}{2}\left(\|z_M^-\|^2 + \|z_0^-\|^2\right)+\|(v,z)\|^2_{L_2(0,T;V)^2} \leq c\|(l_1,z_T,l_2,v_0)\|^2_{(L_2(0,T;V^*)\times H)^2}.
	\end{split}
	\end{align}
	To obtain an estimate on the derivatives, we test the state equation with a test function $\varphi_m\in C^\infty([t_{m-1},t_m];V)$ such that $\varphi(t_{m-1})=\varphi(t_m)=0$ and obtain
	\begin{align*}
	\sum_{m=1}^{M} \langle v',\varphi \rangle_{I_m}  = \langle BJ_{uu}^{-1}B^*z+l_2+A'(x)v,\varphi \rangle_{L_2(0,T;V^*)\times L_2(0,T;V)}.
	\end{align*}
	By density of $C_0^\infty([t_{m-1},t_{m}];V)$ in $L_2(t_{m-1},t_{m};V)$, cf.\ \cite[Lemma 2.1]{Schiela2013}, we conclude the estimate
	\begin{align*}
	&\|\dot{v}\|_{L_2(t_{m-1},t_{m};V^*)} \leq \big(\|A'(x)\|_{L(L_2(t_{m-1},t_{m};V),L_2(t_{m-1},t_{m};
		V^*))}\\&+\|BJ_{uu}^{-1}B^*\|_{L(L_2(t_{m-1},t_{m};V),L_2(t_{m-1},t_{m};V^*))} \big)\|(v,z)\|_{L_2(t_{m-1},t_{m};V)^2} + \|l_2\|_{L_2(t_{m-1},t_{m};V^*)},
	\end{align*}
	which together with \eqref{eq:productspaceest} and proceeding analogously for the adjoint yields the result.
\end{proof}
We now obtain an analogous result to \cref{thm:numerics:scaling_cont} for the discrete-time system. 
\begin{theorem}
	\label{thm:scaling}	
	Let the assumptions of \cref{thm:operatornorm} hold and consider the QOI $I^\tau(x,u)$ defined in \eqref{eq:mpcqoi}. Let $(v_k,q_k,z_k)\in W^M([0,T])\times L_2(0,T;{U})\times W^M([0,T])$ solve \eqref{eq:secondaryvars_timedisc}, i.e.,
	\begin{align}
	\begin{pmatrix}
	{L}_{xx}(x,u)&0&\Lambda^{k,-}\\
	0&J_{uu}(x,u)&-B^*\\
	\Lambda^k&-B&0
	\end{pmatrix}
	\begin{pmatrix}
	v_k\\q_k\\z_k
	\end{pmatrix}
	= -\begin{pmatrix}
	I^\tau_x(x,u)\\
	I^\tau_u(x,u)\\
	0
	\end{pmatrix}.
	\end{align}
	Then for all $\mu>0$ satisfying 
	\begin{align*}
	\mu < \frac{1}{\|(M^k)^{-1}\|_{L(L_2(0,T;V^*)\times H)^2, W^M([0,T])^2)}}
	\end{align*}
	there is a constant $c(\tau)> 0$ independent of $T$ such that
	\begin{align}
	\label{eq:auxiliaryestimate}
	\norm{e^{\mu t} \begin{pmatrix}
		v_k\\q_k\\z_k
		\end{pmatrix}}_{W^M([0,T])\times L_2(0,T;{U}) \times W^M([0,T])} \leq c(\tau)\left(\|J_x(x,u)\|_{L_2(0,\tau;V^*)} + \|J_u(x,u)\|_{L_2(0,\tau;U)}\right).
	\end{align}
\end{theorem}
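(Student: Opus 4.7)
The plan is to mirror the strategy of Theorem~\ref{thm:numerics:scaling_cont}, replacing the continuous inverse bound by the $T$-independent bound on $(M^k)^{-1}$ supplied by Theorem~\ref{thm:operatornorm}. First, I eliminate the control via $q_k = J_{uu}(x,u)^{-1}(B^*z_k + I^\tau_u(x,u))$, reducing the $3\times 3$ system to the $2\times 2$ system
\begin{align*}
M^k\begin{pmatrix}v_k\\z_k\end{pmatrix} = \varepsilon := -\begin{pmatrix}I^\tau_x(x,u)\\0\\BJ_{uu}(x,u)^{-1}I^\tau_u(x,u)\\0\end{pmatrix} \in (L_2(0,T;V^*)\times H)^2,
\end{align*}
on which Theorem~\ref{thm:operatornorm} gives a uniform bound for $(M^k)^{-1}$.

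Next, set $s(t)=e^{\mu t}$ and compute $M^k(sv_k,sz_k)^\top$ by the product rule on each subinterval. The crucial observation is that $s$ is continuous on $[0,T]$, so at every grid point the jump satisfies $[sv_k]_{m-1} = s(t_{m-1})\,[v_k]_{m-1}$ and likewise for $z_k$; hence the jump terms in $\Lambda^k$ and $\Lambda^{k,-}$ transform cleanly. The boundary evaluation $\langle (sv_k)_0^-,\varphi_0^-\rangle$ coincides with $\langle (v_k)_0^-,\varphi_0^-\rangle$ because $s(0)=1$, and the terminal term $\langle (sz_k)_M^-,\varphi_M^-\rangle = s(T)\langle (z_k)_M^-,\varphi_M^-\rangle$ multiplies the vanishing $H$-entry of $\varepsilon$, so it does not contaminate the right-hand side. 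Combined with the pointwise-in-time action of $C^*C$, $A'(x)$, and $BJ_{uu}^{-1}B^*$ (as in the proof of Theorem~\ref{thm:numerics:scaling_cont}), only the derivative terms produce a commutator, yielding
\begin{align*}
M^k\begin{pmatrix}sv_k\\sz_k\end{pmatrix} - \mu P^k \begin{pmatrix}sv_k\\sz_k\end{pmatrix} = s\varepsilon,
\end{align*}
with $P^k$ the direct analog of the perturbation $P$ from the continuous case.

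From here the argument is as in Theorem~\ref{thm:numerics:scaling_cont}: for $\mu < 1/\|(M^k)^{-1}\|$ the Neumann series applied to $I - \mu (M^k)^{-1} P^k$ gives $\|(sv_k,sz_k)^\top\|_{W^M([0,T])^2} \leq (1-\beta)^{-1}\|(M^k)^{-1}\|\,\|s\varepsilon\|$ with $\beta = \mu\|(M^k)^{-1}\|\|P^k\| < 1$. Because $I^\tau_x$ and $I^\tau_u$ are supported in $[0,\tau]$, one has $\|s\varepsilon\|_{(L_2(0,T;V^*)\times H)^2} \leq e^{\mu\tau}\bigl(\|J_x(x,u)\|_{L_2(0,\tau;V^*)} + \|BJ_{uu}^{-1}\|\,\|J_u(x,u)\|_{L_2(0,\tau;U)}\bigr)$, which yields the estimate for $sv_k$ and $sz_k$. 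The estimate for $sq_k$ follows from $q_k = J_{uu}^{-1}(B^*z_k + I^\tau_u)$ together with the bound on $sz_k$ and the support of $I^\tau_u$.

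The main obstacle is justifying the commutator identity in the nonconforming setting: one must carefully track how multiplication by $s$ interacts with the internal jump terms, with the initial/terminal evaluations absorbed into $\Lambda^k$ and $\Lambda^{k,-}$, and with the elliptic operators. Continuity of $s$ is what rescues the jump contributions, and the fact that both $s(0)=1$ and the terminal $H$-entry of $\varepsilon$ vanishes is what prevents an $e^{\mu T}$ factor from appearing on the right-hand side, preserving the $T$-independence of $c(\tau)$.
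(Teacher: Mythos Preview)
Your proposal is correct and follows essentially the same route as the paper: eliminate the control, pass to the scaled variables $e^{\mu t}(v_k,z_k)$, use the product rule on each subinterval together with continuity of $e^{\mu t}$ to handle the jump terms, obtain a perturbed system $(M^k\pm\mu P)$ with $\|P\|\le 1$, and conclude by a Neumann series plus the support condition on $I^\tau$. The only cosmetic difference is that the paper carries out the commutator computation via the dual substitution $\tilde\varphi=e^{-\mu t}\varphi$ in the weak form of $\Lambda^k$ and $\Lambda^{k,-}$, whereas you phrase it as computing $M^k(sv_k,sz_k)^\top$ directly; these are the same calculation, and your remarks on why no $e^{\mu T}$ factor survives at the endpoints make explicit a point the paper leaves implicit.
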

\begin{proof}
	We first rewrite the system by eliminating the control via
	$q_k = J_{uu}^{-1}(x,u)(B^*z_k - I_u(x,u))$  as
	\begin{align*}
	\underbrace{\begin{pmatrix}
		C^*C&\Lambda^{k,-}\\
		\Lambda^k&-BJ_{uu}(x,u)^{-1}B^*\\
		\end{pmatrix}}_{=M^k}
	\begin{pmatrix}
	v_k\\z_k
	\end{pmatrix}
	= -\begin{pmatrix}
	I^\tau_x(x,u)\\
	BJ_{uu}(x,u)^{-1}I^\tau_u(x,u)
	\end{pmatrix}.
	\end{align*}
	We further choose $\mu < \frac{1}{\|(M^k)^{-1}\|_{L((L_2(0,T;V^*)\times H)^2 , W^M([0,T])^2})}$ independently of $T$, cf.\ \cref{thm:operatornorm}, introduce scaled variables $\tilde v_k = e^{\mu t}v_k$ and $\tilde z_k = e^{\mu t}z_k$ and compute that \begin{align*}
	&\langle\Lambda^kv_k,\varphi\rangle_{W^M([0,T])^* \times W^M([0,T])} =\langle \Lambda^k(e^{-\mu t}\tilde{v}_k),\varphi\rangle_{W^M([0,T])^* \times W^M([0,T])}\\ 
	&= \sum_{m=1}^{M} \langle \tfrac{d}{dt}(e^{-\mu t}\tilde{v}_k),\varphi\rangle_{I_m} +
	\langle [e^{-\mu t}\tilde{v}_k]_{m-1},\varphi_{m-1}^+ \rangle -\langle {A}'(x)e^{-\mu t}\tilde{v}_k,\varphi \rangle_{L_2(0,T;V^*)\times L_2(0,T;V)}  \\&\qquad \qquad \qquad \qquad  +\langle (e^{-\mu t}\tilde{v}_k)^-_{0},\varphi_{0}^-\rangle\\
	&= \sum_{m=1}^{M} \langle \dot{\tilde{v}}_k-\mu \tilde{v}_k, e^{-\mu t}\varphi\rangle_{I_m} +\langle [\tilde{v}_k]_{m-1},e^{-\mu t}\varphi_{m-1}^+ \rangle -\langle A'(x)\tilde{v}_k,e^{-\mu t}\varphi \rangle_{L_2(0,T;V^*)\times L_2(0,T;V)} \\&\qquad \qquad \qquad \qquad +  \langle (\tilde{v}_k)^-_{0},(e^{-\mu t}\varphi)_{0}^-\rangle\\
	&= \langle (\Lambda^k-\mu I)\tilde{v}_k,\tilde{\varphi}\rangle_{W^M([0,T])^* \times W^M([0,T])}
	\end{align*}
	where $\tilde{\varphi} = e^{-\mu t}\varphi$. Proceeding analogously for the adjoint equation we get in the scaled variables
	\begin{align*}
	(M^k+\mu P)
	\begin{pmatrix}
	\tilde{v}_k\\\tilde{z}_k
	\end{pmatrix}
	= -\begin{pmatrix}
	\int_0^\tau e^{\mu t}\bar{J}_x(t,x,u) \cdot \,dt\\
	BJ_{uu}(x,u)^{-1}\int_0^\tau e^{\mu t}\bar{J}_u(t,x,u) \cdot \,dt
	\end{pmatrix}
	\end{align*}
	where $P = \begin{pmatrix}
	0&I\\-I&0
	\end{pmatrix}$ and hence $\|P\|_{L(W^M([0,T])^2, (L_2(0,T;V^*)\times H)^2)}<1$.
	We multiply the equation by $(M^k)^{-1}$ and employ a Neumann-series argument as $\mu < \frac{1}{\|(M^k)^{-1}\|_{L((L_2(0,T;V^*)\times H), W^M([0,T])^2)}}$ and obtain
	\begin{align*}\norm{
		\begin{pmatrix}
		v_k\\z_k
		\end{pmatrix}}_{W^M([0,T])^2} 
	&\leq \|(I+\mu (M^k)^{-1} P)^{-1}\|_{L(W^M([0,T])^2, W^M([0,T])^2)} \\& \hspace{-0.9cm} \|(M^k)^{-1}\|_{L((L_2(0,T;V^*)\times H)^2, W^M([0,T])^2)}\norm{\begin{pmatrix}
		\int_0^\tau e^{\mu t}{J}_x(x,u) \cdot \,dt\\
		BJ_{uu}(x,u)^{-1}\int_0^\tau e^{\mu t}{J}_u(x,u) \cdot\,dt
		\end{pmatrix}}_{L_2(0,T;V^*)^2}
	\\&\leq c(\tau)\left(\|J_x(x,u)\|_{L_2(0,\tau;V^*)} + \|J_u(x,u)\|_{L_2(0,\tau;U)}\right)
	\end{align*}
	with a constant $c> 0$ independent of $T$. For the control, we compute 
	\begin{align*}
	\|q_k\|_{L_2(0,T;{U})} &= \|J_{uu}^{-1}(x,u)B^*z_k + J_{uu}(x,u)^{-1}I_u(x,u)\|_{L_2(0,T;{U})} \\& \leq c(\tau)\left(\|J_x(x,u)\|_{L_2(0,\tau;V^*)} + \|J_u(x,u)\|_{L_2(0,\tau;U)}\right),
	\end{align*}
	which concludes the proof.
\end{proof}

We will briefly illustrate the exponential decay of the secondary variables proven in \cref{thm:numerics:scaling_cont,thm:scaling} for a linear quadratic problem. More precisely, we consider the cost functional
$\int_0^T \bar{J}(t,x,u)\,dt := \frac{1}{2}\int_0^T\|x(t)-x_\text{d}(t)\|_{L_2(\Omega)}^2+ \alpha\|u(t)\|^2_{L_2(\Omega)} \,dt$
with $\alpha > 0$ and dynamics governed by a linear heat equation with distributed control that will be specified in \Cref{subsec:numerics:linquad}.
The plots in \cref{fig:sec_long} show the exponential decay of the linearly interpolated discrete-time secondary variables for the QOI $I^\tau(x,u)$ defined in \eqref{eq:mpcqoi}. In \cref{fig:sec_long}, we observe that for all values of the Tikhonov parameter $\alpha$, the state and the control decay exponentially after the time $\tau=0.5$. The ledges in the plot are introduced by the tolerance of the linear solver used for solution of the linear system \eqref{eq:secondaryvars_spacetimedisc}. The smaller we choose $\alpha$, the faster the secondary variables decay in time. This is because $\|M^{-1}\|_{L((L_2(0,T;V^*)\times H)^2, W([0,T])^2)}$ is proportional to $\alpha$, cf.\ \cite[Section 3]{Gruene2018c}. Thus, decreasing $\alpha$ allows for a larger choice of the scaling parameter $\mu>0$ in \cref{thm:numerics:scaling_cont} due to the bound
\begin{align*}
\mu < \frac{1}{\|M^{-1}\|_{L((L_2(0,T;V^*)\times H)^2, W([0,T])^2)}}.
\end{align*}
This straightforwardly carries over to the discrete-time setting considered in \cref{thm:scaling,thm:operatornorm}.
\begin{figure}[H]
	\centering
	\scalebox{.9}{
%
%
%
\begin{tikzpicture}

\begin{axis}[%
width=0.37\linewidth,
height=0.7in,
at={(0.4\linewidth,0in)},
scale only axis,
xmin=-0,
xmax=10,
yticklabel pos = right,
ylabel near ticks,
xlabel style={font=\color{white!15!black}},
xlabel={time $t$},
ymode=log,
ymin=1e-14,
ymax=1,
yminorticks=true,
ylabel style={font=\color{white!15!black}},
ylabel={$\sqrt{\alpha}\|q(t)\|_{L_2(\Omega)}$},
axis background/.style={fill=white},
]
\addplot [dotted, line width = 1pt]
  table[row sep=crcr]{%
0	0\\
0.25	0.208574967261133\\
0.5	0.426078263869741\\
0.75	0.11510600394628\\
1	0.055049154163384\\
1.25	0.026365010514271\\
1.5	0.012641082747365\\
1.75	0.00606626875989471\\
2	0.00291316581035661\\
2.25	0.00139975991040048\\
2.5	0.000672889122313811\\
2.75	0.000323605824079745\\
3	0.000155689394717476\\
3.25	7.49213031442472e-05\\
3.5	3.60473680880635e-05\\
3.75	1.73303620741128e-05\\
4	8.32366674711618e-06\\
4.25	3.99744036753561e-06\\
4.5	1.9227755641445e-06\\
4.75	9.25049458602718e-07\\
5	4.39025335064529e-07\\
5.25	1.98165818441251e-07\\
5.5	8.31336273844678e-08\\
5.75	4.28207134164403e-08\\
6	3.29643507721274e-08\\
6.25	2.08740205727482e-08\\
6.5	6.95852466163625e-09\\
6.75	1.32299768603153e-08\\
7	2.47791116300353e-08\\
7.25	3.25813576300067e-08\\
7.5	3.51810519613976e-08\\
7.75	3.18587045177255e-08\\
8	2.44145424517742e-08\\
8.25	1.94694385973735e-08\\
8.5	2.36459366525118e-08\\
8.75	3.09014997925218e-08\\
9	3.51538615030609e-08\\
9.25	3.48880898009767e-08\\
9.5	3.04342944926861e-08\\
9.75	2.25348942775728e-08\\
10	1.20130012401216e-08\\
};

\addplot [dashdotted, line width=1.5pt]
  table[row sep=crcr]{%
0	0\\
0.25	0.00186863210906469\\
0.5	0.0339564698488431\\
0.75	0.00622735191508289\\
1	0.000292674578651684\\
1.25	1.46846381610132e-05\\
1.5	7.69760658115784e-07\\
1.75	4.05042554015714e-08\\
2	3.48033503384648e-09\\
2.25	1.41119323833388e-09\\
2.5	1.14552597636131e-09\\
2.75	1.10601592860703e-09\\
3	1.15354348077008e-09\\
3.25	6.5837119967462e-10\\
3.5	6.61317363184396e-10\\
3.75	7.33265883543966e-10\\
4	6.76844548111823e-10\\
4.25	1.35323511900584e-09\\
4.5	1.00989244678471e-09\\
4.75	2.46361868453722e-10\\
5	9.53991322739584e-10\\
5.25	1.00798200220358e-09\\
5.5	5.16143921248455e-10\\
5.75	4.80142864607103e-10\\
6	5.9301843611728e-10\\
6.25	5.04861243630337e-10\\
6.5	2.5456050005994e-10\\
6.75	4.58926559996694e-10\\
7	3.64312318008637e-10\\
7.25	2.21911041993395e-10\\
7.5	5.31625538017889e-10\\
7.75	4.22792386388399e-10\\
8	5.75845136614758e-10\\
8.25	4.45220009611687e-10\\
8.5	3.87311363856083e-10\\
8.75	5.90103084490152e-10\\
9	1.52000732759054e-10\\
9.25	4.47213984284657e-10\\
9.5	2.79670976352199e-10\\
9.75	3.22184662218313e-10\\
10	3.33064618870789e-10\\
};

\addplot [solid, line width=1.5pt]
  table[row sep=crcr]{%
0	0\\
0.25	2.49938885008902e-06\\
0.5	0.000490756612683045\\
0.75	1.82664895101476e-05\\
1	4.97594013987674e-08\\
1.25	2.1483001429118e-10\\
1.5	2.31192181288709e-11\\
1.75	2.0033616073704e-11\\
2	1.77321779891605e-11\\
2.25	1.12916042461763e-11\\
2.5	1.305664245541e-11\\
2.75	5.97537501083781e-12\\
3	7.21417943703685e-12\\
3.25	8.8434056276122e-12\\
3.5	7.11681023209349e-12\\
3.75	5.5702135944266e-12\\
4	5.84769103852975e-12\\
4.25	5.20107834375086e-12\\
4.5	6.38557717695111e-12\\
4.75	3.95710460765047e-12\\
5	3.29511976639114e-12\\
5.25	3.79759450129896e-12\\
5.5	4.59467040289902e-12\\
5.75	4.09491272165669e-12\\
6	3.64968592475368e-12\\
6.25	3.7916773432622e-12\\
6.5	2.6491786265631e-12\\
6.75	3.0606245094297e-12\\
7	3.76992247122436e-12\\
7.25	2.48846501540757e-12\\
7.5	2.43991388785823e-12\\
7.75	2.78190212161413e-12\\
8	2.21847824657322e-12\\
8.25	2.15652146785672e-12\\
8.5	2.80054465158984e-12\\
8.75	3.68697443959595e-12\\
9	1.44850242101612e-12\\
9.25	3.37501675254039e-12\\
9.5	2.90429108055829e-12\\
9.75	2.1350053729273e-12\\
10	3.18432636663549e-12\\
};

\addplot [color=black, line width=1.5pt, forget plot]
table[row sep=crcr]{%
0.5	10\\
0.5	1e-15\\
};
\end{axis}

\begin{axis}[%
width=0.37\linewidth,
height=0.7in,
at={(0,0in)},
scale only axis,
xmin=-0,
xmax=10,
xlabel style={font=\color{white!15!black}},
xlabel={time},
ymode=log,
ymin=1e-14,
ymax=1,
yminorticks=true,
ylabel style={font=\color{white!15!black}},
ylabel={$\|v(t)\|_{L_2(\Omega)}$},
axis background/.style={fill=white},
legend style={at={(1.045,1.05)}, anchor=south, legend cell align=left, align=left, draw=white!15!black,legend columns=-1},
cycle list name = black white
]
\addplot [dotted,line width=1.5pt]
table[row sep=crcr]{%
0	0\\
0.25	0.109876571076452\\
0.5	0.286637035728083\\
0.75	0.134272462722272\\
1	0.0633533549787621\\
1.25	0.0300262306561703\\
1.5	0.0142750597811561\\
1.75	0.0068021742862555\\
2	0.00324699188946012\\
2.25	0.00155210900218491\\
2.5	0.000742782353314028\\
2.75	0.000355808751827972\\
3	0.000170576953236308\\
3.25	8.18358972814823e-05\\
3.5	3.92985841741386e-05\\
3.75	1.89038063664205e-05\\
4	9.1221557339487e-06\\
4.25	4.42443182025027e-06\\
4.5	2.16120325605744e-06\\
4.75	1.06672694462566e-06\\
5	5.37619278881114e-07\\
5.25	2.84495913382307e-07\\
5.5	1.67120308366687e-07\\
5.75	1.16973143994686e-07\\
6	9.61347116887791e-08\\
6.25	8.24272338033877e-08\\
6.5	6.85221993758921e-08\\
6.75	5.69656029566675e-08\\
7	5.29281766264253e-08\\
7.25	5.6747727081332e-08\\
7.5	6.32550887647131e-08\\
7.75	6.73614483132469e-08\\
8	6.63867795527753e-08\\
8.25	6.04456993701162e-08\\
8.5	5.24719115498804e-08\\
8.75	4.69392999365754e-08\\
9	4.61589819003066e-08\\
9.25	4.77691633960838e-08\\
9.5	4.79777379152192e-08\\
9.75	4.45667015311506e-08\\
10	3.69237300613368e-08\\
};
\addlegendentry{$\alpha = 10^{-1}$}

\addplot [dashdotted,line width=1.5pt]
table[row sep=crcr]{%
0	0\\
0.25	0.00303326350133962\\
0.5	0.0617548003227151\\
0.75	0.00304154490969665\\
1	0.000158974762948836\\
1.25	8.64171885623015e-06\\
1.5	4.81881916774629e-07\\
1.75	2.90639765823592e-08\\
2	3.70366923331842e-09\\
2.25	3.36486599167022e-09\\
2.5	2.6026422579453e-09\\
2.75	2.39364608925062e-09\\
3	3.35482214391852e-09\\
3.25	2.63726817917971e-09\\
3.5	2.30558321349785e-09\\
3.75	2.70963813060365e-09\\
4	1.90943971735544e-09\\
4.25	3.67299149737542e-09\\
4.5	3.88417886170561e-09\\
4.75	1.8126318637614e-09\\
5	2.13894080789289e-09\\
5.25	3.60421802685551e-09\\
5.5	1.81040807124226e-09\\
5.75	2.85254823383044e-09\\
6	1.21292669618915e-09\\
6.25	2.23459674473446e-09\\
6.5	1.43438124919811e-09\\
6.75	1.57848118133095e-09\\
7	1.60666941902137e-09\\
7.25	7.66196017758807e-10\\
7.5	3.12386881782142e-09\\
7.75	1.3297097906264e-09\\
8	2.20040351753342e-09\\
8.25	2.23367487614967e-09\\
8.5	1.2315778052963e-09\\
8.75	3.00442019085552e-09\\
9	1.50026075639593e-09\\
9.25	1.93658093642398e-09\\
9.5	9.992483659602e-10\\
9.75	1.80436458448195e-09\\
10	2.00099863268709e-09\\
};
\addlegendentry{$\alpha =10^{-3}$}

\addplot [solid, line width=1.5pt]
table[row sep=crcr]{%
0	0\\
0.25	4.78865738897341e-06\\
0.5	0.00155278673715428\\
0.75	4.78874279791148e-06\\
1	2.23514860297312e-08\\
1.25	1.78758872350206e-10\\
1.5	1.15892899442714e-10\\
1.75	1.20463994615524e-10\\
2	1.22955921621858e-10\\
2.25	7.9233404787569e-11\\
2.5	9.87889133498295e-11\\
2.75	5.31671508993664e-11\\
3	6.08080472516808e-11\\
3.25	8.72428475567419e-11\\
3.5	6.45489426047558e-11\\
3.75	5.93104855603228e-11\\
4	6.04137877279779e-11\\
4.25	6.03188349072036e-11\\
4.5	7.37986130461609e-11\\
4.75	5.0409478035813e-11\\
5	5.21180505291105e-11\\
5.25	5.16310830635855e-11\\
5.5	5.29457280289426e-11\\
5.75	5.6309799212827e-11\\
6	5.63591968332172e-11\\
6.25	5.48978224510615e-11\\
6.5	4.80803431203954e-11\\
6.75	5.13577126880563e-11\\
7	5.6111283073628e-11\\
7.25	3.71220735343018e-11\\
7.5	4.73619118332523e-11\\
7.75	4.05646672173536e-11\\
8	4.22316463809683e-11\\
8.25	3.35802218499571e-11\\
8.5	4.17470869505279e-11\\
8.75	6.52187578894752e-11\\
9	3.37919848437783e-11\\
9.25	4.80764252668005e-11\\
9.5	4.95082859667744e-11\\
9.75	4.10705653560515e-11\\
10	5.23472576783698e-11\\
};
\addlegendentry{$\alpha = 10^{-5}$}

\addplot [color=black, line width=1.5pt, forget plot]
table[row sep=crcr]{%
	0.5	10\\
	0.5	1e-15\\
};
\end{axis}

\end{tikzpicture}
	\caption{Norm of the secondary state $v$ and control $q$ of \eqref{eq:secondaryvars_timedisc} over time. The vertical black line indicates the implementation horizon $\tau=0.5$. The norm of the control space is scaled with $\sqrt{\alpha}$ for algorithmic purposes.}
	\label{fig:sec_long}
\end{figure}
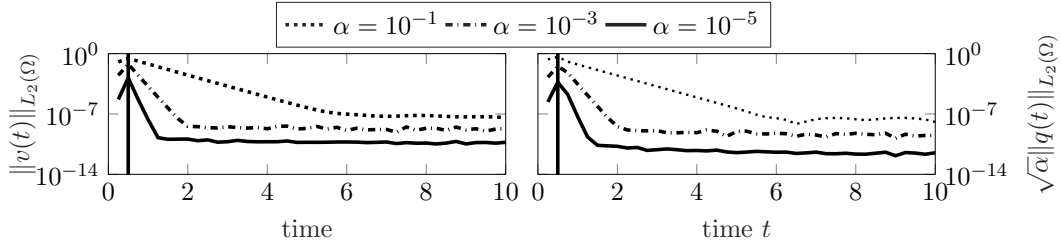
\section{Numerical results}
\label{sec:numerics:numres}
In this part we qualitatively and quantitatively examine the results of goal oriented error estimation with the specialized QOI defined in \eqref{eq:mpcqoi}, i.e., 
\begin{align*}
I^{\tau}(x,u) := \int\limits_0^\tau \bar{J}(t,x,u)\,dt
\end{align*} and compare it with classical error estimation using the full cost functional as QOI, i.e.,
\begin{align*}
J(x,u)=\int\limits_0^T \bar{J}(t,x,u)\,dt.
\end{align*}
We inspect the error indicators for time, space and space-time adaptivity, the resulting grids and the performance of a Model Predictive Controller evaluated via the cost functional value of the MPC trajectory, using goal oriented error estimation with either QOI, i.e., $I^\tau(x,u)$ or $J(x,u)$ used for adaptivity in every solution of an OCP. All numerical examples are performed with the C++-library for vector space algorithms \textit{Spacy}\footnote{https://spacy-dev.github.io/Spacy/}  using the finite element library \textit{Kaskade7} \cite{Goetschel2020}.

\subsubsection*{Problem setting}
In the following, we fix $\Omega = [0,3]\times [0,1]$ and the time horizon $T=10$. We utilize reference trajectories similar to the one defined in \cite[Section 6.7.2]{Meidner2008a}. That is, using
\begin{align*}
g(s) &:= \begin{cases}
10e^{1-\frac{1}{1-s^2}} \qquad &s< 1\\
0   &\text{else},
\end{cases}
\end{align*}
we define a static reference trajectory, depicted in \cref{fig:parabolic:auto_ref}, via
\begin{align}
\label{def:numerics:auto}
x_\text{d}^\text{stat}(\omega) &:= g\left(\frac{10}{3}\norm{\omega - \begin{pmatrix}
	1.5\\0.5\end{pmatrix}}\right).
\end{align}
\begin{figure}[H]
	\centering
	\scalebox{.7}{\input{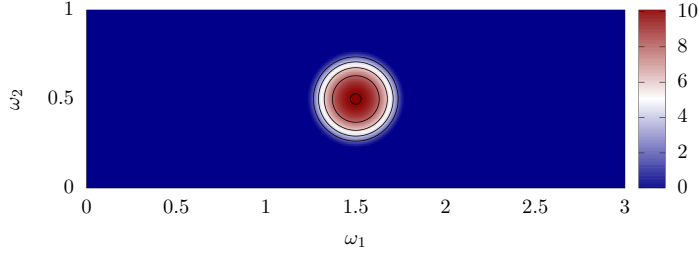}}
	\caption[Static reference trajectory $x_\text{d}^\text{stat}(\omega_1,\omega_2)$.]{Static reference trajectory $x_\text{d}^\text{stat}(\omega)=x_\text{d}^\text{stat}(\omega_1,\omega_2)$}
	\label{fig:parabolic:auto_ref}
\end{figure}
Further, we consider a dynamic reference trajectory given by
\begin{align}
\label{def:numerics:reference_nonauto}
x_\text{d}^\text{dyn}(t,\omega) := g\left(\frac{10}{3}\norm{\omega - \begin{pmatrix}
	\omega_{1,\text{peak}}(t)\\\omega_{2,\text{peak}}(t)\end{pmatrix}}\right),
\end{align}
where
\begin{align*}
\omega_{1,\text{peak}}(t):= 1.5 - \cos\left( \pi\left( \frac{t}{10} \right)\right),\qquad 
w_{2,\text{peak}}(t):= \left\vert \cos\left(\pi\left(\frac{t}{10}\right)\right)\right\vert.
\end{align*}
We will further consider an example with a reference concentrated at the boundary that grows exponentially in time, i.e., 
\begin{align}
\label{def:numerics:expincreasing}
x_\text{d}^\text{exp}(t,\omega) &:= e^{\frac{t}{2}}g\left(\frac{10}{3}\norm{\omega - \begin{pmatrix}
	1.5\\	1\end{pmatrix}}\right).
\end{align}
We consider the cost functional
\begin{align}
\label{def:numerics:cost}
\int_0^T \bar{J}(t,x,u)\,dt := \frac{1}{2}\int_0^T\|x(t)-x_\text{d}(t)&\|_{L_2(\Omega)}^2+ \alpha\|u(t)\|^2_{U} \,dt,
\end{align}
where $x_\text{d}$ is one of the reference trajectories defined above and $\alpha > 0$ is a Tikhonov parameter. Depending on the governing dynamics, we will set $U=L_2(\Omega)$ for the case of distributed control and $U=L_2(\partial\Omega)$ in the case of boundary control. 

%
In the following we will consider different linear and nonlinear, unstable and stable dynamics with distributed and boundary control. We apply the MPC method of Algorithm~\ref{alg::mpcabstract} to these different model problems and perform goal oriented error estimation and grid refinement for either $I^\tau(x,u)$ or $J(x,u)$ as QOI after termination of the OCP solver. After refinement we use the interpolated solution on the refined grid as starting guess and solve the OCP again on the refined mesh. This procedure is repeated until the maximal number of time or space grid points is reached.

In all MPC simulations, we will perform four steps of Algorithm~\ref{alg::mpcabstract}. In case of time adaptivity, we compute all trajectories on a space grid three times uniformly refined, cf.\ \cref{tab:refinements}, and start the adaptive algorithm with three uniformly distributed time points, i.e., at time $0$, $T/2$ and $T$. The control is then fed back into a simulation performed on a grid with 51 uniformly distributed time grid points on $[0,\tau]$ and evaluated via interpolation. In case of space adaptivity, we fix the number of total time grid points to 41, perform adaptive space refinement starting with a grid with one uniform refinement, cf.\ \cref{tab:refinements} and perform the simulation with the same time step size, again evaluating the control via interpolation. The space grids for the simulation are five times uniformly refined, cf.\ \cref{tab:refinements}. In case of space-time adaptivity, we perform the simulation with 51 time grid points on $[0,\tau]$, where every space grid is five times uniformly refined and we start with five time grid points and one uniform refinement.

\begin{table}[H]
	\centering
	\begin{tabular}{c||c|c|c|c|c|c}
		\#uniform refs&0 & 1 &2 & 3 &4 &5  \\\hline\hline
		\#Triangles&12&48&192&768&3072&12288\\
		\#Vertices&11&33&113&417&1602&6273\\
	\end{tabular}
	\caption{Number of elements and degrees of freedom for different hierarchies of the spatial grid.}
	\label{tab:refinements}
\end{table}
We summarize this approach in the following pseudo-code.
	\begin{algorithm}[H]
		\caption{Adaptive MPC}
		\label{alg::MPC_ada}
		\begin{algorithmic}[1]
			\STATE{Given: Prediction horizon $0<T$, implementation horizon $0<\tau\leq T$, initial state $x_0$}
			\STATE{$k=0$}
			\WHILE{Controller active}
			\WHILE{\#DOFs $\leq$ maxDOFs }
			\STATE{Solve OCP on $[k\tau,T+k\tau]$ with initial datum $x_k$}
			\STATE{Compute error indicators, mark cells, refine grid}
			\ENDWHILE
			\STATE{Perform simulation on a fine space/time grid with control $u_{\big|[k\tau,(k+1)\tau]}$ via interpolation}
			\STATE{Update state $x_{k+1}$ via restriction}
			\STATE{$k = k+1$}
			\ENDWHILE
		\end{algorithmic}
	\end{algorithm}
	For the marking strategy in time we chose a D\"orfler strategy, cf.\ \cite[Section 4.2]{Doerfler1996} and aim to reduce the error by a factor of 50\% after refinement. The refinement itself is performed by standard bisection of the intervals. In space, we pursue a different marking strategy, namely we mark all cells that have error indicators higher than 30\%  of the maximal error. This alternative marking strategy is more aggressive if a lot of space grids need to be refined evenly, e.g., in case the variables are close to a turnpike for several time discretization points. With a D\"orfler criterion it can happen that the refinement procedure terminates after refining only some of the space grids, despite the error indicators being of similar size. For a more advanced marking strategy, where the number of refined cells is optimized, the interested reader is referred to \cite[Section 6.5]{Meidner2008a} and the references therein. After marking, the refinement in space is performed with standard red-green refinement, cf.\ \cite[Section 6.2.2]{AdaSolPDE} and \cite{Bank1983}.

In the following, we compare adaptivity with respect to the full cost functional with adaptivity with respect to a truncated cost functional. We do not draw any comparison to non-adaptive uniform discretizations. The issue with grid adaptivity in general is of course that it highly depends on the problem whether adaptive techniques are necessary at all. Moreover, there is a wide range of tuning parameters that parameterize the tradeoff between efficiency and accuracy and, on top of that, it depends on the particular implementation. 
	One example for such a parameter is the aggressiveness of the marking strategy. Depending on the choice of how many cells are marked in each ”solve, estimate, mark, refine”–cycle, the number of total cycles can change, up to the extreme that we only perform one of the above cycles. In order to avoid such dependencies, we chose to compare two adaptive approaches and only change the quantity of interest. This allows for a statement in terms of higher performance with comparable computational burden. If we compared an adaptive with a non-adaptive approach, we could have higher performance of the controller with higher computational cost, and the evaluation of what is desirable or what is the best compromise highly depends on the application.
	For these reasons, we leave the discussion of adaptive vs. non-adaptive for future research.
\subsection{Distributed linear quadratic control}
\label{subsec:numerics:linquad}
We will first consider linear quadratic problems and dynamics governed by a linear heat equation with distributed control, i.e.,
\begin{align}
\nonumber
&&\dot{x} &=   0.1\Delta x + sx + u  &&\text{in }(0,T) \times \Omega \\
\label{eq:numerics:linear_distributed}
&&x &= 0 &&\text{in }  (0,T)\times \partial\Omega \\
\nonumber
&&x(0) &= 0 &&\text{in } \Omega,
\end{align}
where $s\in \mathbb{R}$ is a stability (if $s < 0$) or instability (if $s>0$) parameter. Note that due to the negative eigenvalues of the Dirichlet Laplace operator, the uncontrolled equation ($u=0$) is stable when choosing $s$ to be smaller than the absolute value of the largest eigenvalue of the Dirichlet Laplacian.
We aim to minimize the standard tracking-type cost functional \eqref{def:numerics:ocp} subject to these dynamics.
\subsubsection*{Time adaptivity}
\label{subsec:timestatic}
In \cref{fig:numerics:linear_unstable}, we depict the spatial norm of state and control over time for an autonomous problem with reference trajectory $x_\text{d}^\text{stat}$, cf.\ \eqref{def:numerics:auto}, and Tikhonov parameter $\alpha = 10^{-1}$ governed by dynamics described by \eqref{eq:numerics:linear_distributed} with instability parameter $s=4$. The refinement algorithm for both QOIs is performed until the number of $41$ time grid points is reached. We observe that the refinement with respect to the truncated QOI $I^\tau(x,u)$ only takes place at the beginning of the time interval. Further, we see that the error indicators, computed as indicated at the end of \Cref{sec:intro} decay exponentially shortly after the implementation horizon $\tau = 0.5$ due to the exponential decay of the secondary variables proven in \cref{thm:numerics:scaling_cont,thm:scaling}. In contrast, choosing the entire cost functional $J(x,u)$ as a QOI, we see that the refined time grid is fine towards $t=0$ and $t=T$. This is due to the fact that the dynamics exhibit steady state turnpike behavior, i.e., the highly dynamic parts are located at the beginning and the end of the time horizon. Hence, in order to obtain an accurate solution on the whole horizon, these parts need to be refined. Further we observe that the solution obtained by refinement via $I^\tau(x,u)$ does not exhibit the leaving arc despite very clearly showing the approaching arc. The reason for this is that it would be not optimal to perform the leaving arc as due to the coarse discretization on the last part of the interval, the control/state have to leave the turnpike very early, leading to sub-optimal behavior. This reflects the exponential decay of sensitivity that is present in this optimal control problem, i.e., in order to have a small discretization error on the initial part, the behavior towards the end of the time horizon is (exponentially) irrelevant.
\begin{figure}[H]
	\centering
	\scalebox{0.9}{\input{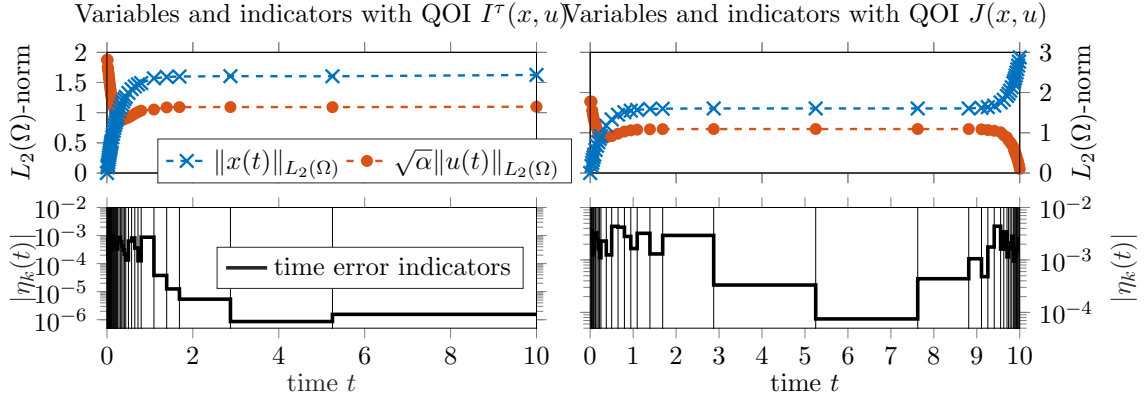}}
	\caption[Open loop trajectories and time grids for a linear quadratic autonomous problem]{Open loop trajectories and error indicators in the first MPC step after adaptive refinement with 41 time grid points for an unstable problem with distributed control and static reference. The vertical lines illustrate the adaptively refined time grid.}
	\label{fig:numerics:linear_unstable}
\end{figure}
Having investigated the error indicators and the resulting time refinement in the context of one optimal control problem, we depict the performance gain in a Model Predictive Controller with three examples when using the truncated QOI in \cref{fig:numerics:linear_time_fvals} for adaptivity in every MPC step. We depict the closed-loop cost of the MPC trajectory obtained by applying four steps of the MPC method Algorithm~\ref{alg::mpcabstract} to the optimal control problem. The plot on the left depicts the closed-loop cost for a stable autonomous problem with $s=0$, $\alpha = 10^{-3}$, and reference $x_\text{d}^\text{stat}$. The plot in the middle refers to the to the unstable autonomous problem of \cref{fig:numerics:linear_unstable}. On the right, we evaluate the performance for an non-autonomous problem with boundary control, i.e., we replace in \cref{eq:numerics:linear_distributed} the distributed control by a Neumann boundary control, and use the reference $x_\text{d}^\text{dyn}$, $\alpha = 10^{-3}$, $s=0$, and MPC implementation horizon $\tau = 1$. In all three cases we observe that for a given number of maximal time steps, choosing the specialized QOI $I^\tau(x,u)$ as an objective for refinement leads to a significant reduction of the closed-loop cost, i.e., a better controller performance.
\begin{figure}[h]
	\centering
	\scalebox{.9}{
%
%
\definecolor{mycolor1}{rgb}{0.00000,0.44700,0.74100}%
\begin{tikzpicture}

\begin{axis}[%
width=0.28\linewidth,
height=0.8in,
at={(0in,0in)},
scale only axis,
xmin=4,
xmax=42,
xtick={ 5,  8, 11, 21, 31,41},
xlabel style={font=\color{white!15!black}},
xlabel={number of time grid points},
ymax=1,
ylabel style={font=\color{white!15!black}},
ylabel={closed-loop cost},
axis background/.style={fill=white},
title style={align=center,at={(2.6in,1)}},
legend style={legend pos = north east,legend cell align=left, align=left, draw=white!15!black}
]
\addplot [color=black, dashdotted, line width=2.0pt, mark=*, mark options={solid, black}]
  table[row sep=crcr]{%
5 0.971723117738656\\
8 0.971723117831177\\
11 0.971723140161875\\
21 0.759592292670437\\
31 0.758025794412031\\
41 0.759833952585075\\
};


\addplot [color=black, dashdotted, line width=2.0pt, mark size=5.0pt, mark=x, mark options={solid, black}]
  table[row sep=crcr]{%
5 0.791424062456763\\
8 0.742257096474869\\
11 0.726280958666194\\
21 0.701739951631346\\
31 0.681182571137161\\
41 0.678307442969864\\
};


\end{axis}

\begin{axis}[%
width=0.28\linewidth,
height=0.8in,
at={(0.33\linewidth,0in)},
scale only axis,
xmin=4,
xmax=42,
xtick={ 5,  8, 11, 21, 31,41},
xlabel style={font=\color{white!15!black}},
xlabel={number of time grid points},
yminorticks=false,
ylabel style={font=\color{white!15!black}},
axis background/.style={fill=white},
legend style={at={(0.5,1.2)},legend cell align=center, align=center,anchor=center, draw=white!15!black,legend columns=-1}
]
\addplot [color=black, dashdotted, line width=2.0pt, mark=*, mark options={solid, black}]
table[row sep=crcr]{%
5 17.5885283708724\\
8 8.03928952377969\\
11 5.4918034273583\\
21 4.59969271601855\\
31 4.50887642822812\\
41 4.411738566776\\
};
\addlegendentry{ref.\ with QOI $J(x,u)$}

\addplot [color=black, dashdotted, line width=2.0pt, mark size=5.0pt, mark=x, mark options={solid, black}]
table[row sep=crcr]{%
5 8.60060939778925\\
8 5.44195647579295\\
11 4.49247972208364\\
21 4.31682790197201\\
31 4.27418294191909\\
41 4.25245220935333\\
};
\addlegendentry{ref.\ with QOI $I^\tau(x,u)$}

\end{axis}
\begin{axis}[%
width=0.28\linewidth,
height=0.8in,
at={(0.66\linewidth,0in)},
scale only axis,
xmin=4,
xmax=42,
xtick={ 5,  8, 11, 21, 31,41},
xlabel style={font=\color{white!15!black}},
xlabel={number of time grid points},
yminorticks=false,
ylabel style={font=\color{white!15!black}},
axis background/.style={fill=white},
legend style={legend pos = north east,legend cell align=left, align=left, draw=white!15!black}
]
\addplot [color=black, dashdotted, line width=2.0pt, mark=*, mark options={solid, black}]
table[row sep=crcr]{%
5 7.38564040754814\\
8 7.44823017542751\\
11 7.47685376197402\\
21 7.46012819804939\\
31 7.15560386616318\\
41 6.57907453503698\\
};

\addplot [color=black, dashdotted, line width=2.0pt, mark size=5.0pt, mark=x, mark options={solid, black}]
table[row sep=crcr]{%
5 7.41546152881424\\
8 7.06534926352963\\
11 6.65794758434338\\
21 6.4858623015927\\
31 6.48076679465141\\
41 6.47096545649196\\
};

\end{axis}
\end{tikzpicture}
	\caption[Comparison of MPC closed-loop cost for time adaptivity with linear quadratic dynamics]{Comparison of cost functional values of the MPC closed-loop trajectory for different QOIs used for temporal refinement. Left: Stable autonomous problem. Middle: Unstable autonomous problem. Right: Boundary controlled non-autonomous problem.}
	\label{fig:numerics:linear_time_fvals}
\end{figure}
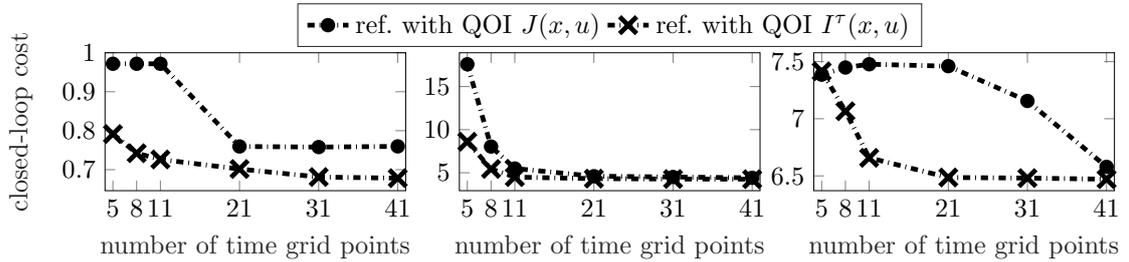

\FloatBarrier
\subsubsection*{Space adaptivity}
\label{subsec:space}
We now investigate the case of space refinement. To this end, we compare the error indicators, the resulting space grids and the closed-loop cost for refinement with objective $I^\tau(x,u)$ and $J(x,u)$, respectively. In the upper row of \cref{fig:numerics:linear_space_auto} the space error indicators for an autonomous optimal control problem governed by the linear dynamics with distributed control of \eqref{eq:numerics:linear_distributed} with $s=0$, reference $x_\text{d}^\text{stat}$ and Tikhonov parameter $\alpha=10^{-3}$ are depicted. Similar to the time error indicators in \cref{fig:numerics:linear_unstable}, the space error indicators for the objective $I^\tau(x,u)$ decay exponentially after the implementation horizon, whereas they stay almost constant over the whole time horizon in case of the QOI $J(x,u)$. The latter is again due to the turnpike property, i.e., the dynamic trajectories are close to the solution of the steady state problem for the majority of the time. The higher indicators at the beginning of the time interval are due to the high control action to approach the turnpike. Further, the indicators for the cost functional decay at the end of the horizon due to the terminal condition of the adjoint, which requires the control to approach zero, leading to a more regular state by diffusion and thus less need to refine. In the lower row of \cref{fig:numerics:linear_space_auto}, the resulting degrees of freedom (DOFs) of the space grids over time for three different numbers of maximal total spatial DOFs are depicted. It is clearly visible that for $I^\tau(x,u)$ a refinement only takes place at the beginning of the time horizon and the majority of the space grids are unrefined. In contrast to that, the spatial refinement for $J(x,u)$ takes place on the whole horizon. This numerical observation reflects the theoretical result of exponential decay of the secondary variables as shown in \Cref{sec:secondary} and \Cref{sec:secondary_discrete}. These secondary variables are also used for the computation of the space error indicators, cf.\ \eqref{eq:spaceerr}.
\begin{figure}[H]
	\centering
	\scalebox{.9}{
%
%
\definecolor{mycolor1}{rgb}{0.00000,0.44700,0.74100}%
\definecolor{mycolor2}{rgb}{0.85000,0.32500,0.09800}%
\definecolor{mycolor3}{rgb}{0.92900,0.69400,0.12500}%
\definecolor{mycolor4}{rgb}{0.49400,0.18400,0.55600}%

\begin{tikzpicture}

\begin{axis}[%
width=0.38\linewidth,
height=0.5in,
at={(0in,0.8in)},
scale only axis,
xmin=-0,
xmax=10,
xlabel style={font=\color{white!15!black}},
xlabel={},
xticklabels={\empty},
ymode=log,
ymin=2.77248e-12,
ymax=0.338236,
yminorticks=true,
ylabel style={font=\color{white!15!black}},
ylabel={$|\eta_h(t)|$},
axis background/.style={fill=white},
title={Refined with QOI $I^\tau(x,u)$}
]
\addplot [line width=1.5pt]
table[row sep=crcr]{%
0.0000000000 0.0000000000 \\ 
0.2500000000 0.0169118000 \\ 
0.5000000000 0.0022732900 \\ 
0.7500000000 0.0068881000 \\ 
1.0000000000 0.0004828205 \\ 
1.2500000000 0.0000332806 \\ 
1.5000000000 0.0000022562 \\ 
1.7500000000 0.0000001511 \\ 
2.0000000000 0.0000000123 \\ 
2.2500000000 0.0000000023 \\ 
2.5000000000 0.0000000004 \\ 
2.7500000000 0.0000000007 \\ 
3.0000000000 0.0000000006 \\ 
3.2500000000 0.0000000008 \\ 
3.5000000000 0.0000000011 \\ 
3.7500000000 0.0000000005 \\ 
4.0000000000 0.0000000002 \\ 
4.2500000000 0.0000000003 \\ 
4.5000000000 0.0000000001 \\ 
4.7500000000 0.0000000001 \\ 
5.0000000000 0.0000000003 \\ 
5.2500000000 0.0000000001 \\ 
5.5000000000 0.0000000002 \\ 
5.7500000000 0.0000000001 \\ 
6.0000000000 0.0000000000 \\ 
6.2500000000 0.0000000001 \\ 
6.5000000000 0.0000000000 \\ 
6.7500000000 0.0000000000 \\ 
7.0000000000 0.0000000001 \\ 
7.2500000000 0.0000000001 \\ 
7.5000000000 0.0000000002 \\ 
7.7500000000 0.0000000001 \\ 
8.0000000000 0.0000000001 \\ 
8.2500000000 0.0000000001 \\ 
8.5000000000 0.0000000002 \\ 
8.7500000000 0.0000000002 \\ 
9.0000000000 0.0000000001 \\ 
9.2500000000 0.0000000002 \\ 
9.5000000000 0.0000000000 \\ 
9.7500000000 0.0000000002 \\ 
10.0000000000 0.0000000001 \\ 
};
\addlegendentry{space error indicators};
\addplot [color=black, line width=1.5pt, forget plot]
table[row sep=crcr]{%
	0.5	2.77248e-12\\
	0.5	0.338236\\
};
\end{axis}

\begin{axis}[%
width=0.38\linewidth,
height=0.5in,
at={(0in,0in)},
scale only axis,
xmin=-0,
xmax=10,
xlabel style={font=\color{white!15!black}},
xlabel={time},
ymode=log,
ymin=28,
ymax=5966,
yminorticks=true,
ylabel style={font=\color{white!15!black}},
ylabel={spatial DOFs(t)},
axis background/.style={fill=white},
legend style={at={(0.97,0.7)}, anchor=east, legend cell align=left, align=left, draw=white!15!black}
]
\addplot [color=mycolor1,dotted,line width=1.5pt]
table[row sep=crcr]{%
	0	33\\
	0.25	169\\
	0.5	157\\
	0.75	41\\
	1	33\\
	1.25	33\\
	1.5	33\\
	1.75	33\\
	2	33\\
	2.25	33\\
	2.5	33\\
	2.75	33\\
	3	33\\
	3.25	33\\
	3.5	33\\
	3.75	33\\
	4	33\\
	4.25	33\\
	4.5	33\\
	4.75	33\\
	5	33\\
	5.25	33\\
	5.5	33\\
	5.75	33\\
	6	33\\
	6.25	33\\
	6.5	33\\
	6.75	33\\
	7	33\\
	7.25	33\\
	7.5	33\\
	7.75	33\\
	8	33\\
	8.25	33\\
	8.5	33\\
	8.75	33\\
	9	33\\
	9.25	33\\
	9.5	33\\
	9.75	33\\
	10	33\\
};
\addlegendentry{low \#DOFs}
\addplot [color=mycolor2,dashdotted,line width=1.5pt]
table[row sep=crcr]{%
	0	33\\
	0.25	2261\\
	0.5	2137\\
	0.75	153\\
	1	53\\
	1.25	33\\
	1.5	33\\
	1.75	33\\
	2	33\\
	2.25	33\\
	2.5	33\\
	2.75	33\\
	3	33\\
	3.25	33\\
	3.5	33\\
	3.75	33\\
	4	33\\
	4.25	33\\
	4.5	33\\
	4.75	33\\
	5	33\\
	5.25	33\\
	5.5	33\\
	5.75	33\\
	6	33\\
	6.25	33\\
	6.5	33\\
	6.75	33\\
	7	33\\
	7.25	33\\
	7.5	33\\
	7.75	33\\
	8	33\\
	8.25	33\\
	8.5	33\\
	8.75	33\\
	9	33\\
	9.25	33\\
	9.5	33\\
	9.75	33\\
	10	33\\
};
\addlegendentry{med.\ \#DOFs}

\addplot [color=mycolor4,solid,line width=1.5pt]
table[row sep=crcr]{%
	0	33\\
	0.25	5961\\
	0.5	4377\\
	0.75	377\\
	1	121\\
	1.25	53\\
	1.5	33\\
	1.75	33\\
	2	33\\
	2.25	33\\
	2.5	33\\
	2.75	33\\
	3	33\\
	3.25	33\\
	3.5	33\\
	3.75	33\\
	4	33\\
	4.25	33\\
	4.5	33\\
	4.75	33\\
	5	33\\
	5.25	33\\
	5.5	33\\
	5.75	33\\
	6	33\\
	6.25	33\\
	6.5	33\\
	6.75	33\\
	7	33\\
	7.25	33\\
	7.5	33\\
	7.75	33\\
	8	33\\
	8.25	33\\
	8.5	33\\
	8.75	33\\
	9	33\\
	9.25	33\\
	9.5	33\\
	9.75	33\\
	10	33\\
};
\addlegendentry{high \#DOFs}

\addplot [color=black, line width=1.5pt, forget plot]
table[row sep=crcr]{%
	0.5	28\\
	0.5	5966\\
};
\end{axis}

\begin{axis}[%
width=0.38\linewidth,
height=0.5in,
at={(0.42\linewidth,0.8in)},
scale only axis,
xmin=-0,
xmax=10,
xlabel style={font=\color{white!15!black}},
xlabel={},
xticklabels={\empty},
ymode=log,
ymin=3.73227e-05,
ymax=0.337132,
yminorticks=true,
ylabel style={font=\color{white!15!black}},
ylabel={$|\eta_h(t)|$},
yticklabel pos = right,
ylabel near ticks,
axis background/.style={fill=white},
title={Refined with QOI $J(x,u)$}
]
\addplot [ line width=1.5pt]
table[row sep=crcr]{%
0.0000000000 0.0000000000 \\ 
0.2500000000 0.0168566000 \\ 
0.5000000000 0.0026735150 \\ 
0.7500000000 0.0026447300 \\ 
1.0000000000 0.0026675750 \\ 
1.2500000000 0.0026702000 \\ 
1.5000000000 0.0026704500 \\ 
1.7500000000 0.0026702800 \\ 
2.0000000000 0.0026700650 \\ 
2.2500000000 0.0026703650 \\ 
2.5000000000 0.0026702950 \\ 
2.7500000000 0.0026701850 \\ 
3.0000000000 0.0026704550 \\ 
3.2500000000 0.0026704000 \\ 
3.5000000000 0.0026702800 \\ 
3.7500000000 0.0026703900 \\ 
4.0000000000 0.0026703350 \\ 
4.2500000000 0.0026702650 \\ 
4.5000000000 0.0026703600 \\ 
4.7500000000 0.0026703400 \\ 
5.0000000000 0.0026702300 \\ 
5.2500000000 0.0026702850 \\ 
5.5000000000 0.0026703650 \\ 
5.7500000000 0.0026703300 \\ 
6.0000000000 0.0026703150 \\ 
6.2500000000 0.0026703400 \\ 
6.5000000000 0.0026703100 \\ 
6.7500000000 0.0026702850 \\ 
7.0000000000 0.0026703300 \\ 
7.2500000000 0.0026703450 \\ 
7.5000000000 0.0026703650 \\ 
7.7500000000 0.0026704200 \\ 
8.0000000000 0.0026703700 \\ 
8.2500000000 0.0026703050 \\ 
8.5000000000 0.0026702800 \\ 
8.7500000000 0.0026701450 \\ 
9.0000000000 0.0026701000 \\ 
9.2500000000 0.0026695800 \\ 
9.5000000000 0.0026585950 \\ 
9.7500000000 0.0024943450 \\ 
10.0000000000 0.0001866135 \\ 
};
\addplot [color=black, line width=1.5pt, forget plot]
table[row sep=crcr]{%
	0.5	3.73227e-05\\
	0.5	0.337132\\
};
\end{axis}

\begin{axis}[%
width=0.38\linewidth,
height=0.5in,
at={(0.42\linewidth,0in)},
scale only axis,
xmin=-0,
xmax=10,
xlabel style={font=\color{white!15!black}},
xlabel={time},
ymode=log,
ymin=28,
ymax=466,
yminorticks=true,
ylabel style={font=\color{white!15!black}},
ylabel={spatial DOFs(t)},
axis background/.style={fill=white},
yticklabel pos = right,
ylabel near ticks,
legend style={at={(0.97,0.5)}, anchor=east, legend cell align=left, align=left, draw=white!15!black}
]
\addplot [color=mycolor1, dotted,line width=1.5pt]
table[row sep=crcr]{%
	0	33\\
	0.25	57\\
	0.5	45\\
	0.75	45\\
	1	45\\
	1.25	39\\
	1.5	39\\
	1.75	39\\
	2	39\\
	2.25	39\\
	2.5	39\\
	2.75	39\\
	3	39\\
	3.25	39\\
	3.5	39\\
	3.75	39\\
	4	39\\
	4.25	39\\
	4.5	39\\
	4.75	45\\
	5	39\\
	5.25	39\\
	5.5	39\\
	5.75	39\\
	6	39\\
	6.25	39\\
	6.5	45\\
	6.75	39\\
	7	39\\
	7.25	39\\
	7.5	39\\
	7.75	39\\
	8	39\\
	8.25	39\\
	8.5	39\\
	8.75	39\\
	9	39\\
	9.25	39\\
	9.5	39\\
	9.75	39\\
	10	39\\
};

\addplot [color=mycolor2,dashdotted,line width=1.5pt]
table[row sep=crcr]{%
	0	33\\
	0.25	149\\
	0.5	141\\
	0.75	139\\
	1	139\\
	1.25	139\\
	1.5	139\\
	1.75	139\\
	2	133\\
	2.25	133\\
	2.5	139\\
	2.75	139\\
	3	133\\
	3.25	139\\
	3.5	136\\
	3.75	133\\
	4	133\\
	4.25	136\\
	4.5	133\\
	4.75	133\\
	5	133\\
	5.25	136\\
	5.5	139\\
	5.75	133\\
	6	136\\
	6.25	139\\
	6.5	139\\
	6.75	133\\
	7	136\\
	7.25	133\\
	7.5	133\\
	7.75	133\\
	8	139\\
	8.25	139\\
	8.5	133\\
	8.75	133\\
	9	133\\
	9.25	133\\
	9.5	133\\
	9.75	133\\
	10	141\\
};

\addplot [color=mycolor4,solid,line width=1.5pt]
table[row sep=crcr]{%
	0	33\\
	0.25	461\\
	0.5	397\\
	0.75	381\\
	1	375\\
	1.25	375\\
	1.5	375\\
	1.75	375\\
	2	375\\
	2.25	369\\
	2.5	369\\
	2.75	369\\
	3	369\\
	3.25	369\\
	3.5	369\\
	3.75	369\\
	4	369\\
	4.25	369\\
	4.5	369\\
	4.75	372\\
	5	369\\
	5.25	369\\
	5.5	372\\
	5.75	369\\
	6	369\\
	6.25	369\\
	6.5	369\\
	6.75	372\\
	7	369\\
	7.25	369\\
	7.5	369\\
	7.75	369\\
	8	369\\
	8.25	369\\
	8.5	369\\
	8.75	369\\
	9	369\\
	9.25	369\\
	9.5	369\\
	9.75	369\\
	10	299\\
};

\addplot [color=black, line width=1.5pt, forget plot]
table[row sep=crcr]{%
	0.5	28\\
	0.5	466\\
};
\end{axis}
\end{tikzpicture}
	\caption[Spatial error indicators and spatial DOFs for a linear quadratic autonomous problem]{Spatial error indicators before refinement and spatial degrees of freedom after last refinement for different maximal numbers of degrees of freedom for an autonomous optimal control problem. The vertical black line indicates the implementation horizon $\tau = 0.5$.}
	\label{fig:numerics:linear_space_auto}
\end{figure}
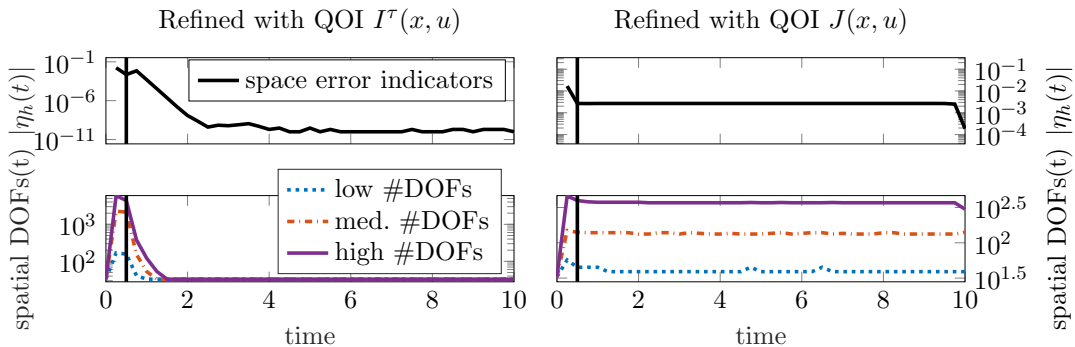 In \cref{fig:numerics:linear_spacegrids}, we depict the resulting space grids and the state over time for the intermediate case in \cref{fig:numerics:linear_space_auto}, i.e., the grids enjoy 5828 and 5478 total spatial DOFs, respectively. The slightly different numbers of total DOFs stem from the used space refinement methodology, where, in order to avoid hanging nodes, the refinement of one cell can lead to a refinement of neighboring cell.. In case of refinement for the full cost functional, the space grids have to capture the steady state turnpike on the majority of the interval. This is not the case for refinement with $I^\tau(x,u)$, where we observe unrefined space grids shortly after the implementation horizon $\tau=0.5$.
\begin{figure}
	\begin{tikzpicture}[scale=0.95]
	\def\d{2.3}
	\def\t{0.5cm}
	\node (0,0){};
	\node[label={[label distance=0.2cm,text depth=-1ex]above:time $t$}] at (0.5\linewidth,1.4) {};
	\node[label={[label distance=0.2cm,text depth=-1ex]above:$I^\tau(x,u)$}] at (0.2\linewidth,1.4) {};
	\node[label={[label distance=0.2cm,text depth=-1ex]above:$J(x,u)$}] at (0.8\linewidth,1.4) {};
	\draw [very thick](0.5\linewidth,1.4) -> (0.5\linewidth,-5.5*\d) node [above right] {};
	\draw [arrow,very thick](0.5\linewidth,-6.5*\d) -> (0.5\linewidth,-7.5*\d) node [above right] {};
	\draw [very thick,dotted] (0.2\linewidth,-5.8*\d)-> (0.2\linewidth,-6.2*\d)[]{};
	\draw [very thick,dotted] (0.5\linewidth,-5.8*\d)-> (0.5\linewidth,-6.2*\d)[]{};
	\draw [very thick,dotted] (0.8\linewidth,-5.8*\d)-> (0.8\linewidth,-6.2*\d)[]{};
	
	\draw [very thick](0.5*\linewidth-\t,0) -> (0.5*\linewidth+\t,0);
	\draw [very thick](0.5*\linewidth-\t,0) -> (0.5*\linewidth+\t,0) node [ align=right,above right] {};
	\node[label={[anchor=south east]west:$t\!=\!0$}] at (0.51\linewidth,0.2cm) {};
	\draw [very thick](0.5*\linewidth-\t,-1*\d) -> (0.5*\linewidth+\t,-1*\d);;
	\node[label={[anchor=south east]west:$\tau\!=\!0.5$}] at (0.51\linewidth,-4.4) {};
	
	\draw [very thick](0.5*\linewidth-\t,-2*\d) -> (0.5*\linewidth+\t,-2*\d) node [ align=right,above right] {};
	\draw [very thick](0.5*\linewidth-\t,-3*\d) -> (0.5*\linewidth+\t,-3*\d) node [ align=right,above right] {};
	\draw [very thick](0.5*\linewidth-\t,-4*\d) -> (0.5*\linewidth+\t,-4*\d) node [ align=right,above right] {};
	\draw [very thick](0.5*\linewidth-\t,-5*\d) -> (0.5*\linewidth+\t,-5*\d) node [ align=right,above right] {};
	\draw [very thick](0.5*\linewidth-\t,-7*\d) -> (0.5*\linewidth+\t,-7*\d);
	\node[label={[anchor=south east]west:$T\!=\!10$}] at (0.51\linewidth,-15.9) {};

	\node[inner sep=0pt,anchor=west] (whitehead) at (0,0)
	{\includegraphics[width=0.4\linewidth]{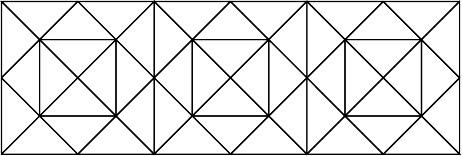}};
	\node[inner sep=0pt,anchor=east] (whitehead) at (0.2\linewidth,0)
	{\includegraphics[width=0.2\linewidth]{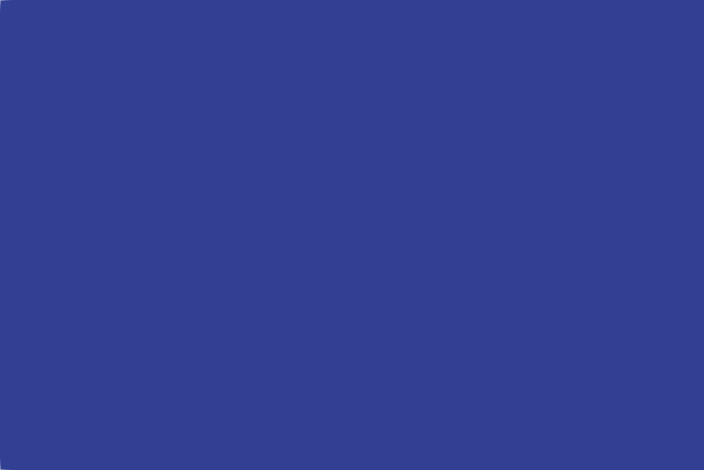}};
	
	\node[inner sep=0pt,anchor=west] (whitehead) at (0,-1*\d)
	{\includegraphics[width=0.4\linewidth]{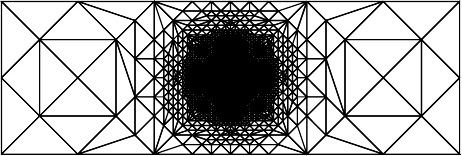}};
	\node[inner sep=0pt,anchor=east] (whitehead) at (0.2\linewidth,-1*\d)
	{\includegraphics[width=0.2\linewidth]{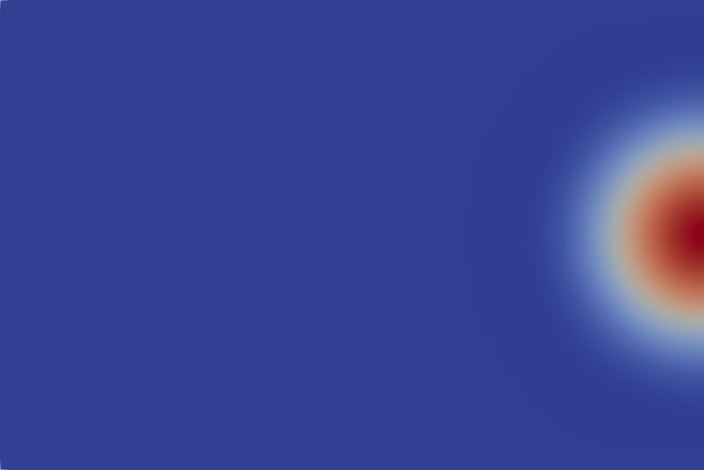}};
	\node[inner sep=0pt,anchor=west] (whitehead) at (0,-2*\d)
	{\includegraphics[width=0.4\linewidth]{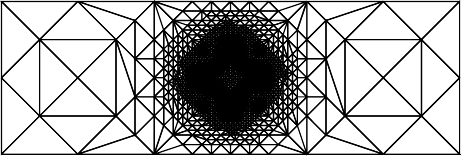}};
	\node[inner sep=0pt,anchor=east] (whitehead) at (0.2\linewidth,-2*\d)
	{\includegraphics[width=0.2\linewidth]{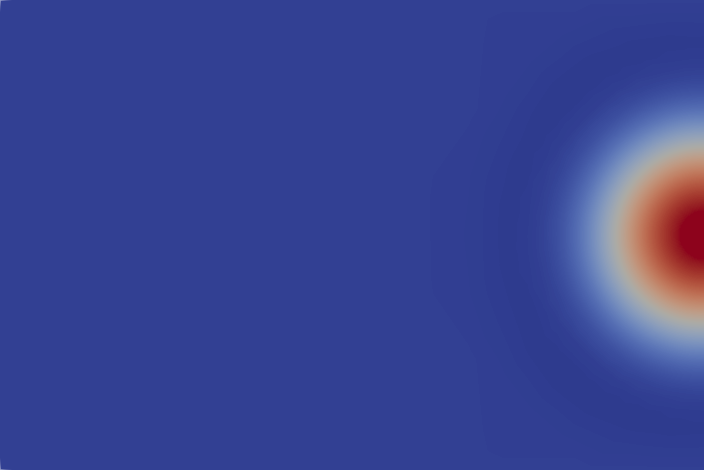}};
	\node[inner sep=0pt,anchor=west] (whitehead) at (0,-3*\d)
	{\includegraphics[width=0.4\linewidth]{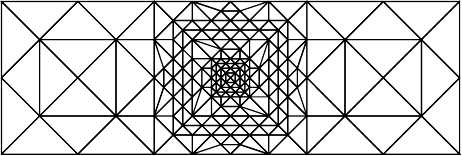}};
	\node[inner sep=0pt,anchor=east] (whitehead) at (0.2\linewidth,-3*\d)
	{\includegraphics[width=0.2\linewidth]{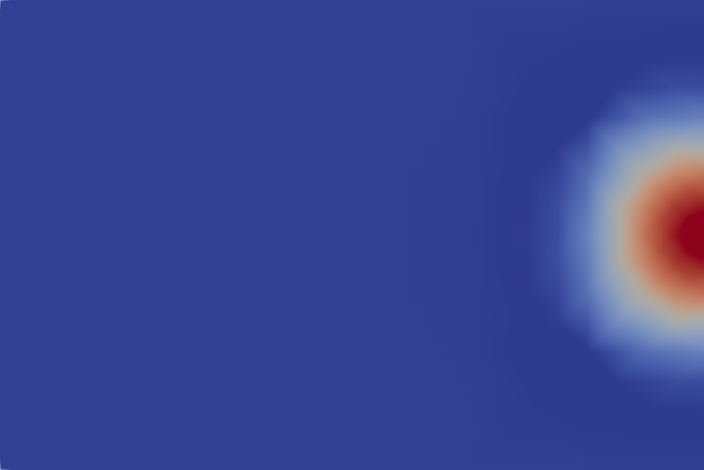}};
	\node[inner sep=0pt,anchor=west] (whitehead) at (0,-4*\d)
	{\includegraphics[width=0.4\linewidth]{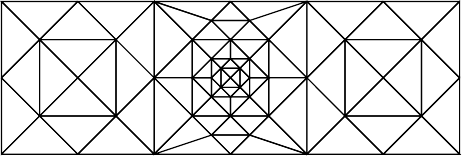}};
	\node[inner sep=0pt,anchor=east] (whitehead) at (0.2\linewidth,-4*\d)
	{\includegraphics[width=0.2\linewidth]{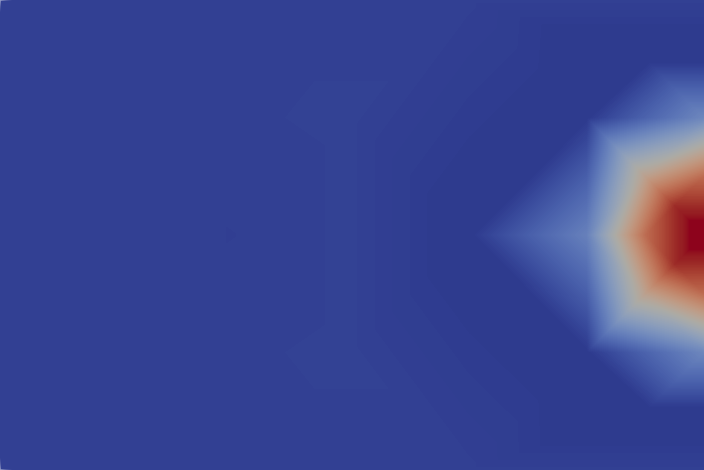}};
	\node[inner sep=0pt,anchor=west] (whitehead) at (0,-5*\d)
	{\includegraphics[width=0.4\linewidth]{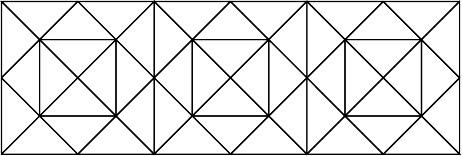}};
	\node[inner sep=0pt,anchor=east] (whitehead) at (0.2\linewidth,-5*\d)
	{\includegraphics[width=0.2\linewidth]{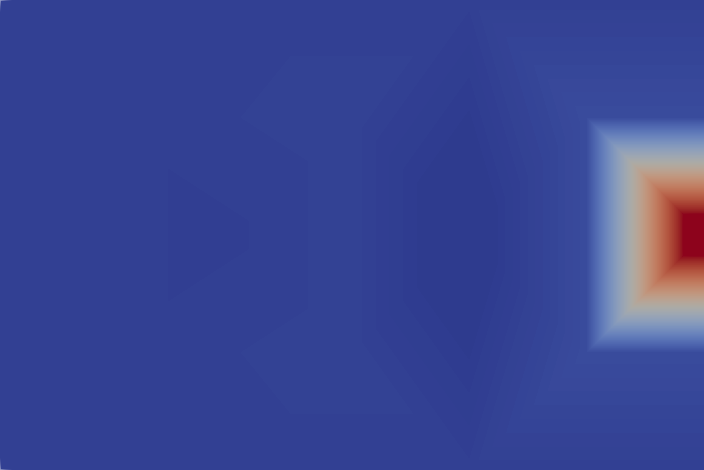}};
	\node[inner sep=0pt,anchor=west] (whitehead) at (0,-7*\d)
	{\includegraphics[width=0.4\linewidth]{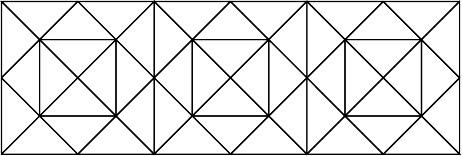}};
	\node[inner sep=0pt,anchor=east] (whitehead) at (0.2\linewidth,-7*\d)
	{\includegraphics[width=0.2\linewidth]{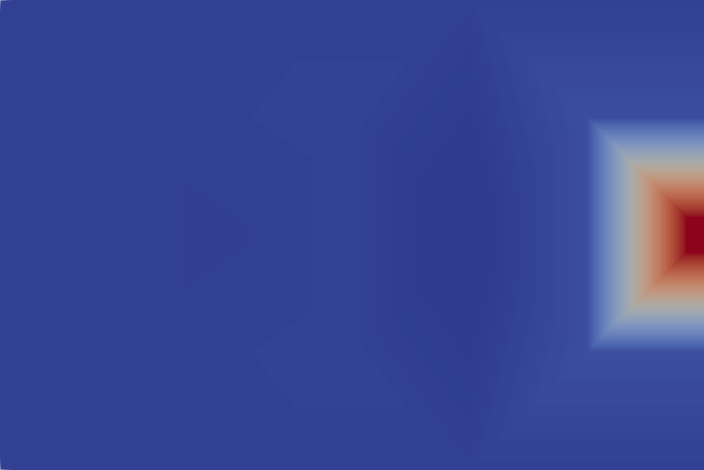}};

	\node[inner sep=0pt,anchor=east] (whitehead) at (\linewidth,0)
	{\includegraphics[width=0.4\linewidth]{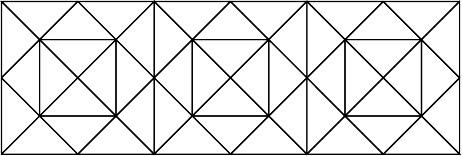}};
	\node[inner sep=0pt,anchor=west] (whitehead) at (0.8\linewidth,0)
	{\includegraphics[width=0.2\linewidth]{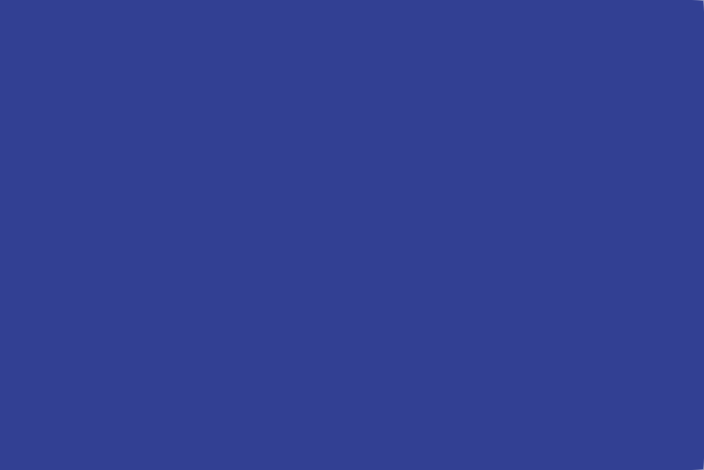}};
	\node[inner sep=0pt,anchor=east] (whitehead) at (\linewidth,-1*\d)
	{\includegraphics[width=0.4\linewidth]{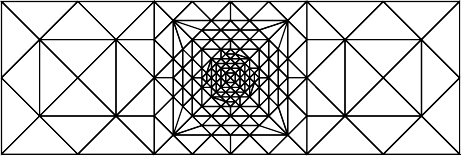}};
	\node[inner sep=0pt,anchor=west] (whitehead) at (0.8\linewidth,-1*\d)
	{\includegraphics[width=0.2\linewidth]{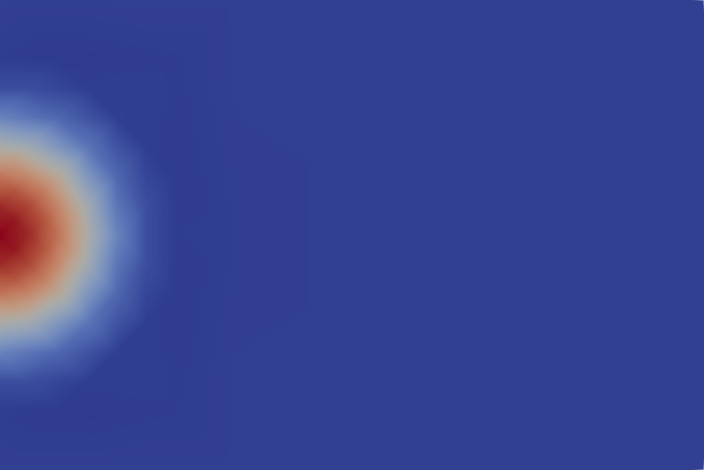}};
	
	\node[inner sep=0pt,anchor=east] (whitehead) at (\linewidth,-2*\d)
	{\includegraphics[width=0.4\linewidth]{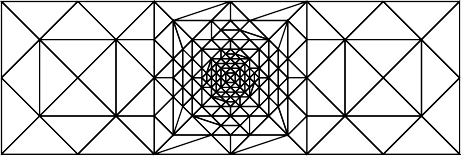}};
	\node[inner sep=0pt,anchor=west] (whitehead) at (0.8\linewidth,-2*\d)
	{\includegraphics[width=0.2\linewidth]{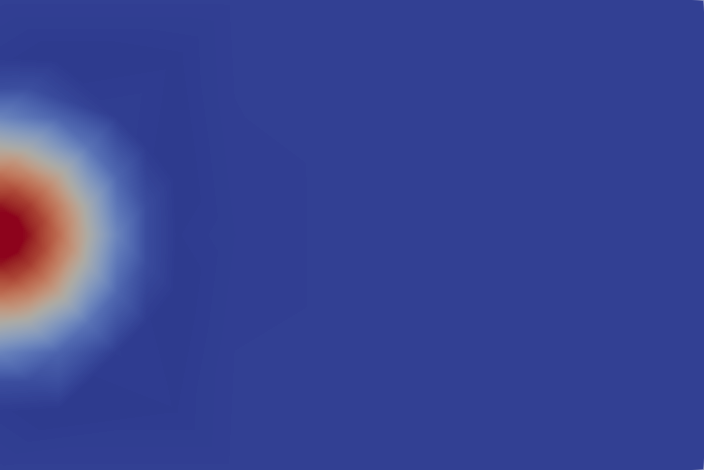}};
	\node[inner sep=0pt,anchor=east] (whitehead) at (\linewidth,-3*\d)
	{\includegraphics[width=0.4\linewidth]{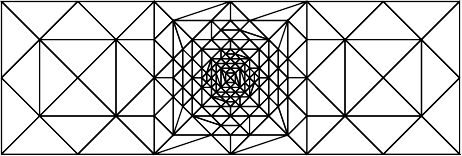}};
	\node[inner sep=0pt,anchor=west] (whitehead) at (0.8\linewidth,-3*\d)
	{\includegraphics[width=0.2\linewidth]{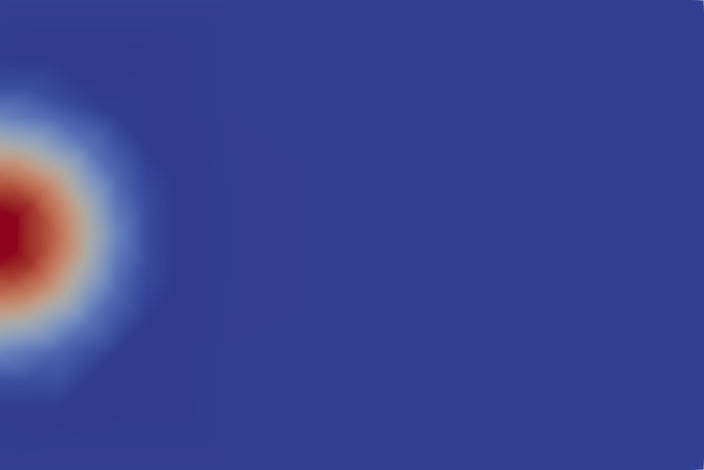}};
	\node[inner sep=0pt,anchor=east] (whitehead) at (\linewidth,-4*\d)
	{\includegraphics[width=0.4\linewidth]{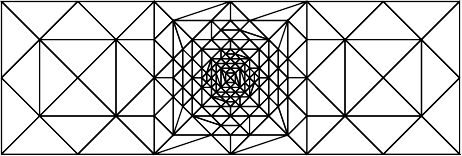}};
	\node[inner sep=0pt,anchor=west] (whitehead) at (0.8\linewidth,-4*\d)
	{\includegraphics[width=0.2\linewidth]{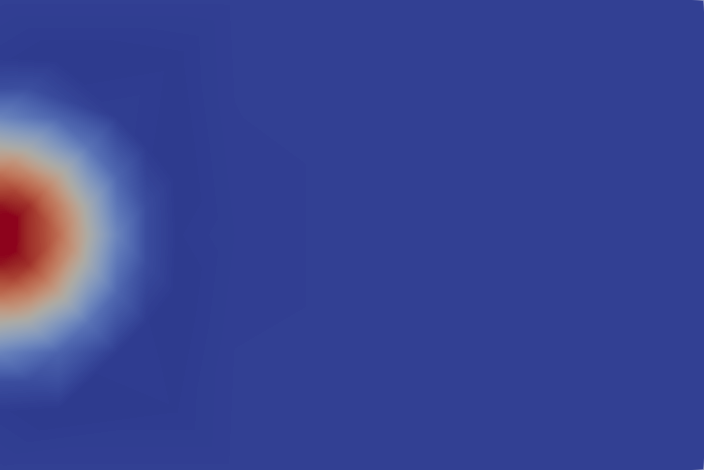}};
	\node[inner sep=0pt,anchor=east] (whitehead) at (\linewidth,-5*\d)
	{\includegraphics[width=0.4\linewidth]{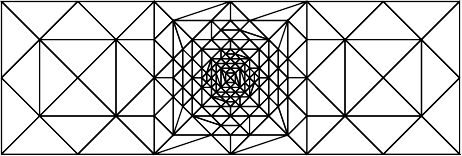}};
	\node[inner sep=0pt,anchor=west] (whitehead) at (0.8\linewidth,-5*\d)
	{\includegraphics[width=0.2\linewidth]{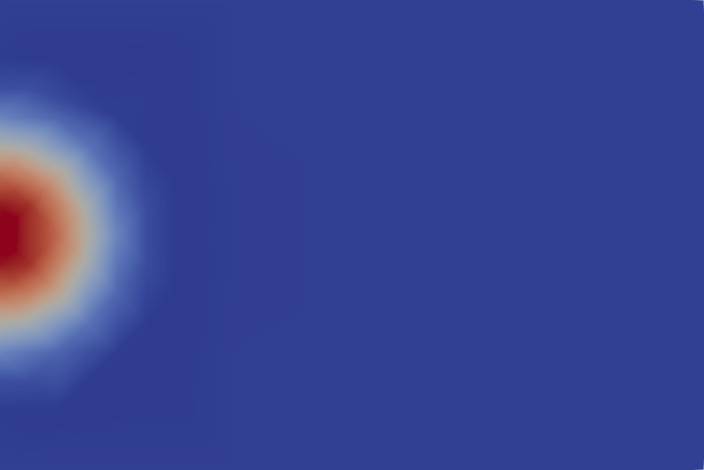}};
	\node[inner sep=0pt,anchor=east] (whitehead) at (\linewidth,-7*\d)
	{\includegraphics[width=0.4\linewidth]{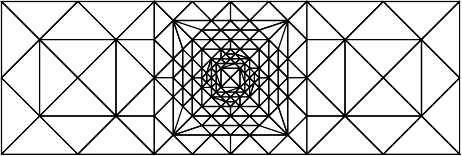}};
	\node[inner sep=0pt,anchor=west] (whitehead) at (0.8\linewidth,-7*\d)
	{\includegraphics[width=0.2\linewidth]{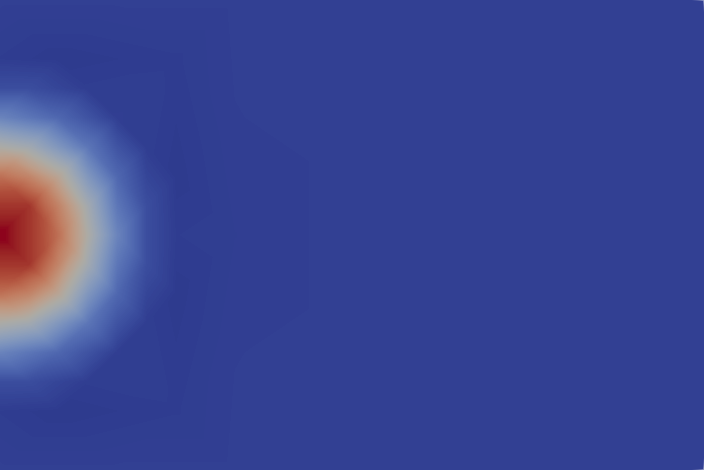}};
	\end{tikzpicture}
	\caption[Evolution of adaptively refined space grids for a linear quadratic problem]{Evolution of adaptively refined space grids for $I^\tau(x,u)$ (left) and $J(x,u)$ (right) with 5825 and 5478 total spatial DOFs, respectively.}
	\label{fig:numerics:linear_spacegrids}
\end{figure}
Finally we again inspect the performance gain from using $I^\tau(x,u)$ as a QOI in adaptive MPC. We examine the autonomous problem of \cref{fig:numerics:linear_space_auto} and further consider a non-autonomous problem with the exponentially increasing reference \cref{def:numerics:expincreasing}, $\alpha = 10^{-3}$ and $s=0$. In \cref{fig:fvals} we observe that for both examples the closed-loop cost is lower when using the specialized QOI $I^\tau(x,u)$ for refinement. In case of the exponentially increasing reference we further see that increasing the allowance for space refinement does not increase the performance when refining for $J(x,u)$, as all grid point are used towards $T$ and thus the MPC feedback is not refined at all. We will inspect this behavior in detail in \Cref{subsec:numerics:quasilin}.
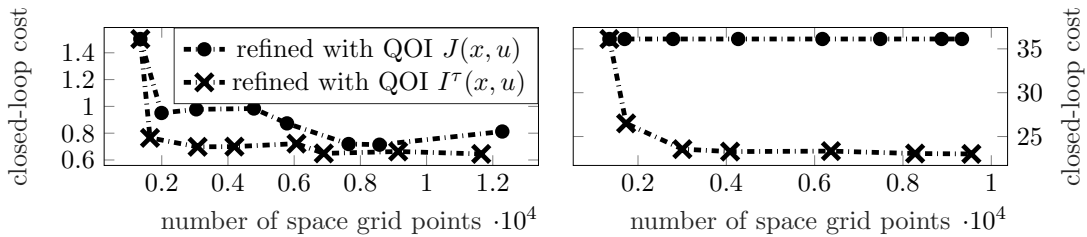
\begin{figure}
	\centering
	\scalebox{.9}{
%
%
\definecolor{mycolor1}{rgb}{0.00000,0.44700,0.74100}%
\begin{tikzpicture}

\begin{axis}[%
width=2.5in,
height=0.8in,
at={(0in,1.5in)},
scale only axis,
xlabel style={font=\color{white!15!black}},
xlabel={number of space grid points},
ylabel style={font=\color{white!15!black}},
ylabel={closed-loop cost},
axis background/.style={fill=white},
title style={align=center,at={(2.6in,1)}},
legend style={at={(1,1)}, draw=white!15!black}
]
\addplot [color=black, dashdotted, line width=2.0pt, mark=*, mark options={solid, black}]
  table[row sep=crcr]{%
1353 1.50284434681286\\
1993 0.95080705665472\\
3043 0.977918486316675\\
4783 0.984025318472223\\
5783 0.873223215734578\\
7645 0.719113482958571\\
8577 0.714134749494935\\
12287 0.812515617149041\\
};

\addlegendentry{refined with QOI $J(x,u)$}

\addplot [color=black, dashdotted, line width=2.0pt, mark size=5.0pt, mark=x, mark options={solid, black}]
  table[row sep=crcr]{%
1353 1.50284434681286\\
1645 0.765080068299374\\
3083 0.698322870206426\\
4195 0.69979591362663\\
6070 0.722063689997339\\
6895 0.649161648818584\\
9141 0.663373225009628\\
11650 0.645129585746067\\
};

\addlegendentry{refined with QOI $I^\tau(x,u)$}

\end{axis}

\begin{axis}[%
width=2.5in,
height=0.8in,
at={(2.7in,1.5in)},
scale only axis,
xlabel style={font=\color{white!15!black}},
xlabel={number of space grid points},
yticklabel pos = right,
ylabel near ticks,
yminorticks=false,
ylabel style={font=\color{white!15!black}},
ylabel={closed-loop cost},
axis background/.style={fill=white}
]
\addplot [color=black, dashdotted, line width=2.0pt, mark=*, mark options={solid, black}]
table[row sep=crcr]{%
1353 36.1243007676739\\
1697 36.124300785368\\
2792 36.1243007561747\\
4269 36.1243007649599\\
6177 36.1243007632131\\
7485 36.1243007906212\\
8877 36.1243007777062\\
9335 36.1243007187975\\
};

\addplot [color=black, dashdotted, line width=2.0pt, mark size=5.0pt, mark=x, mark options={solid, black}]
table[row sep=crcr]{%
1353 36.1243007676739\\
1731 26.5006160977074\\
3005 23.5618765913998\\
4085 23.2827444582959\\
6369 23.337151620086\\
8275 23.0612891114376\\
9549 23.0148427334042\\
};

\end{axis}
%
%

\end{tikzpicture}
	\caption[Comparison of MPC closed-loop cost for space adaptivity with linear quadratic dynamics]{Comparison of cost functional values of the MPC closed-loop trajectory for different QOIs used for spatial refinement. Left: Autonomous problem. Right: Non-autonomous problem with exponentially increasing reference.}
	\label{fig:fvals}
\end{figure}
\FloatBarrier
\subsubsection*{Space-time adaptivity}
\label{subsec:spacetime}
We briefly address the subject of space and time adaptivity for the linear dynamics \eqref{eq:numerics:linear_distributed} with static reference $x_\text{d}^\text{stat}$, $\alpha = 10^{-3}$, $\tau=0.5$ and $s=0$. After time and space error estimation, we refine either space or time, depending on which is subject to a larger total error. This was chosen because of its clarity and simplicity and we note that there are more involved space-time refinement strategies, cf.\ \cite[Section 6.5]{Meidner2008a}. Using the policy described above, the adaptive refinement with $I^\tau(x,u)$ terminated with 12 time grid points, whereas the refinement with $J(x,u)$ terminated with 11 time grid points. As to be expected, the space and time grid refinement for $I^\tau(x,u)$ primarily takes place on the initial part of the horizon, cf.\ \cref{fig:numerics:spacetime_par}. The space and time error indicators for the QOI $I^\tau(x,u)$ decay again exponentially after the implementation horizon $\tau =0.5$, which is not the case when choosing $J(x,u)$ as QOI.  Further we observe clearly in \cref{fig:numerics:fvals_spacetime} that again refinement with the truncated cost functional leads to a better performance of the Model Predictive Controller. We note that for this example, employing only time adaptivity for the QOI $I^\tau(x,u)$ with three uniform refinements in space, cf.\ \cref{tab:refinements}, leads to a lower closed-loop cost for the same number of total DOFs. This is no longer the case for two uniform refinements. 
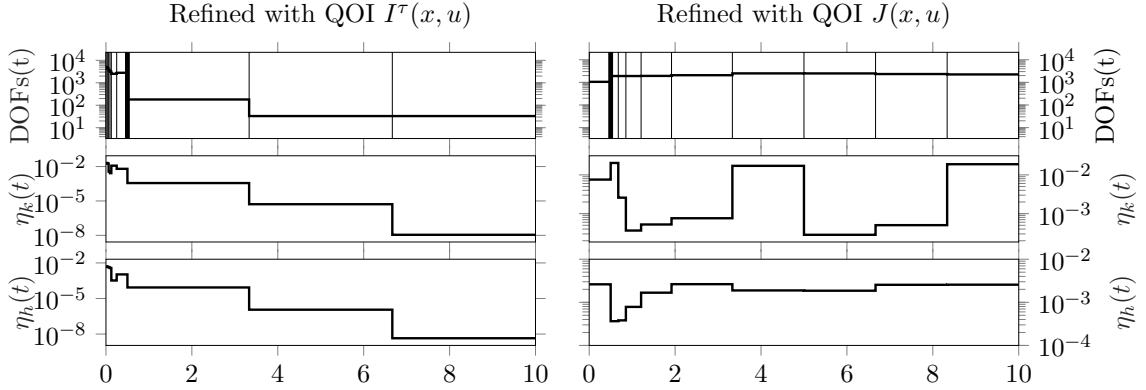
\begin{figure}[H]
	\centering
	\scalebox{.9}{
%
%
\begin{tikzpicture}

\begin{axis}[%
width=0.4\linewidth,
height=0.5in,
at={(0in,0in)},
scale only axis,
xmin=-0,
xmax=10,
xlabel style={font=\color{white!15!black}},
xlabel={},
xticklabels={\empty},
ymode=log,
ymin=3.3,
ymax=22730,
tick align=outside,
yminorticks=true,
ylabel style={font=\color{white!15!black}},
ylabel={DOFs(t)},
axis background/.style={fill=white},
title={Refined with QOI $I^\tau(x,u)$},
legend style={at={(0.99,0.85)}, anchor=east, legend cell align=left, align=left, draw=white!15!black}
]
\addplot[const plot,  line width=1.0pt] table[row sep=crcr] {%
	0	5039\\
	0.03125	4277\\
	0.0625	3555\\
	0.09375	3111\\
	0.125	2517\\
	0.25	2793\\
	0.5	181\\
	3.33333	33\\
	6.66667	33\\
	10	33\\
};

\addplot [color=black, line width=0.1pt, forget plot]
table[row sep=crcr]{%
	0	10000000000\\
	0	1e-11\\
};
\addplot [color=black, line width=0.1pt, forget plot]
table[row sep=crcr]{%
	0.03125	10000000000\\
	0.03125	1e-11\\
};
\addplot [color=black, line width=0.1pt, forget plot]
table[row sep=crcr]{%
	0.0625	10000000000\\
	0.0625	1e-11\\
};
\addplot [color=black, line width=0.1pt, forget plot]
table[row sep=crcr]{%
	0.09375	10000000000\\
	0.09375	1e-11\\
};
\addplot [color=black, line width=0.1pt, forget plot]
table[row sep=crcr]{%
	0.125	10000000000\\
	0.125	1e-11\\
};
\addplot [color=black, line width=0.1pt, forget plot]
table[row sep=crcr]{%
	0.25	10000000000\\
	0.25	1e-11\\
};
\addplot [color=black, line width=0.1pt, forget plot]
table[row sep=crcr]{%
	0.5	10000000000\\
	0.5	1e-11\\
};
\addplot [color=black, line width=0.1pt, forget plot]
table[row sep=crcr]{%
	3.33333	10000000000\\
	3.33333	1e-11\\
};
\addplot [color=black, line width=0.1pt, forget plot]
table[row sep=crcr]{%
	6.66667	10000000000\\
	6.66667	1e-11\\
};
\addplot [color=black, line width=0.1pt, forget plot]
table[row sep=crcr]{%
	10	10000000000\\
	10	1e-11\\
};
\addplot [color=black, line width=2.0pt, forget plot]
table[row sep=crcr]{%
0.5	1.61595e-06\\
0.5	1e10\\
};
\end{axis}

\begin{axis}[%
width=0.4\linewidth,
height=0.5in,
at={(0.45\linewidth,0in)},
scale only axis,
xmin=-0,
xmax=10,
xlabel style={font=\color{white!15!black}},
xlabel={},
ymode=log,
ymin=3.3,
ymax=21270,
yminorticks=true,
xticklabels={\empty},
tick align=outside,
ylabel style={font=\color{white!15!black}},
ylabel={DOFs(t)},
xlabel={time $t$},
yticklabel pos = right,
ylabel near ticks,
axis background/.style={fill=white},
title={Refined with QOI $J(x,u)$},
legend style={at={(0.8,0.85)}, anchor=east, legend cell align=left, align=left, draw=white!15!black}
]
\addplot[const plot,line width=1.0pt, forget plot] table[row sep=crcr] {%
	0	1057\\
	0.5	1907\\
	0.677083 1907	\\
	0.854167 1907	\\
	1.20833	1931\\
	1.91667	2073\\
	3.33333	2497\\
	5	2485\\
	6.66667	2335\\
	8.33333	2245\\
	10	2245\\
};
\addplot [color=black, line width=0.1pt, forget plot]
table[row sep=crcr]{%
	0	10000000000\\
	0	1e-11\\
};
\addplot [color=black, line width=0.1pt, forget plot]
table[row sep=crcr]{%
	0.5	10000000000\\
	0.5	1e-11\\
};
\addplot [color=black, line width=0.1pt, forget plot]
table[row sep=crcr]{%
	0.677083	10000000000\\
	0.677083	1e-11\\
};
\addplot [color=black, line width=0.1pt, forget plot]
table[row sep=crcr]{%
	0.854167	10000000000\\
	0.854167	1e-11\\
};
\addplot [color=black, line width=0.1pt, forget plot]
table[row sep=crcr]{%
	1.20833	10000000000\\
	1.20833	1e-11\\
};
\addplot [color=black, line width=0.1pt, forget plot]
table[row sep=crcr]{%
	1.91667	10000000000\\
	1.91667	1e-11\\
};
\addplot [color=black, line width=0.1pt, forget plot]
table[row sep=crcr]{%
	3.33333	10000000000\\
	3.33333	1e-11\\
};
\addplot [color=black, line width=0.1pt, forget plot]
table[row sep=crcr]{%
	5	10000000000\\
	5	1e-11\\
};
\addplot [color=black, line width=0.1pt, forget plot]
table[row sep=crcr]{%
	6.66667	10000000000\\
	6.66667	1e-11\\
};
\addplot [color=black, line width=0.1pt, forget plot]
table[row sep=crcr]{%
	8.33333	10000000000\\
	8.33333	1e-11\\
};
\addplot [color=black, line width=0.1pt, forget plot]
table[row sep=crcr]{%
	10	10000000000\\
	10	1e-11\\
};

\addplot [color=black, line width=2.0pt, forget plot]
table[row sep=crcr]{%
0.5	0.0120526\\
0.5	1e10\\
};
\end{axis}

\begin{axis}[%
width=0.4\linewidth,
height=0.5in,
at={(0,-0.6in)},
scale only axis,
xmin=-0,
xmax=10,
xlabel style={font=\color{white!15!black}},
xlabel={},
xticklabels={\empty},
ymode=log,
tick align=outside,
yminorticks=true,
ylabel style={font=\color{white!15!black}},
ylabel={$\eta_k(t)$},
axis background/.style={fill=white},
legend style={at={(0.99,0.85)}, anchor=east, legend cell align=left, align=left, draw=white!15!black}
]
\addplot[const plot,line width=1.0pt] table[row sep=crcr] {%
0   0.0206007\\
0.03125   0.0182799\\
0.0625   0.00364931\\
0.09375   0.00265087\\
0.125    0.0119639\\
0.25    0.0063364\\
0.5  0.000363601\\
3.33333  5.18284e-06\\
6.66667 1.11396e-08\\
10 1.11396e-08\\
};
\end{axis}

\begin{axis}[%
width=0.4\linewidth,
height=0.5in,
at={(0.45\linewidth,-0.6in)},
scale only axis,
xmin=-0,
xmax=10,
xlabel style={font=\color{white!15!black}},
xlabel={},
xticklabels={\empty},
ymode=log,
tick align=outside,
yminorticks=true,
yticklabel pos = right,
ylabel near ticks,
ylabel style={font=\color{white!15!black}},
ylabel={$\eta_k(t)$},
axis background/.style={fill=white},
legend style={at={(0.99,0.85)}, anchor=east, legend cell align=left, align=left, draw=white!15!black}
]
\addplot[const plot,line width=1.0pt] table[row sep=crcr] {%
0  0.00756977\\
0.5    0.0203205\\
0.677083   0.00258262\\
0.854167  0.000366402\\
1.20833 0.000525111\\
1.91667  0.000761037\\
3.33333    0.0171188\\
5  0.000284531\\
6.66667  0.000504767\\
8.33333   0.0188518\\
10 0.0188518\\
};
\end{axis}

\begin{axis}[%
width=0.4\linewidth,
height=0.5in,
at={(0,-1.2in)},
scale only axis,
xmin=-0,
xmax=10,
xlabel style={font=\color{white!15!black}},
ymode=log,
tick align=outside,
yminorticks=true,
ylabel style={font=\color{white!15!black}},
ylabel={$\eta_h(t)$},
axis background/.style={fill=white},
legend style={at={(0.99,0.85)}, anchor=east, legend cell align=left, align=left, draw=white!15!black}
]
\addplot[const plot,line width=1.0pt] table[row sep=crcr] {%
0 0.00508267\\
0.03125 0.00457754\\
0.0625 0.00420686\\
0.09375 0.00367643\\
0.125 0.000328024\\
0.25 0.00107521\\
0.5 8.51704e-05\\
3.33333 1.13306e-06\\
6.66667 4.42962e-09\\
10 4.42962e-09\\
};
\end{axis}

\begin{axis}[%
width=0.4\linewidth,
height=0.5in,
at={(0.45\linewidth,-1.2in)},
scale only axis,
xmin=-0,
xmax=10,
ymin=1e-4,
ymax = 1e-2,
xlabel style={font=\color{white!15!black}},
ymode=log,
tick align=outside,
yminorticks=true,
yticklabel pos = right,
ylabel near ticks,
ylabel style={font=\color{white!15!black}},
ylabel={$\eta_h(t)$},
axis background/.style={fill=white},
legend style={at={(0.99,0.85)}, anchor=east, legend cell align=left, align=left, draw=white!15!black}
]
\addplot[const plot, line width=1.0pt] table[row sep=crcr] {%
0 0.00261728\\
0.5 0.00036518\\
0.677083 0.000381479\\
0.854167 0.000783268\\
1.20833 0.00167348\\
1.91667  0.00263691\\
3.33333 0.0018886\\
5 0.00185918\\
6.66667 0.00254914\\
8.33333 0.0025655\\
10 0.0025655\\
};
\end{axis}
\end{tikzpicture}
	\caption{Spatial DOFs over time for a total allowance for 20000 degrees of freedom for a fully adaptive space-time refinement.}
	\label{fig:numerics:spacetime_par}
\end{figure} 
\begin{figure}[H]
	\centering
	\scalebox{.9}{
%
%
\definecolor{mycolor1}{rgb}{0.00000,0.44700,0.74100}%
\begin{tikzpicture}

\begin{axis}[%
width=2.5in,
height=0.8in,
at={(0in,1.5in)},
scale only axis,
xlabel style={font=\color{white!15!black}},
xlabel={number of space grid points},
ymin = 0.8,
ymax = 1.1,
ylabel style={font=\color{white!15!black}},
ylabel={closed-loop cost},
axis background/.style={fill=white},
title style={align=center,at={(2.6in,1)}},
legend style={anchor=west, draw=white!15!black,at={(1,0.5)}}
]
\addplot [color=black, dashdotted, line width=2.0pt, mark=*, mark options={solid, black}]
  table[row sep=crcr]{%
2165 1.07381992197631\\
4166 1.06091952710597\\
7604 1.01785399255639\\
10859 1.01680099074648\\
15509 1.01643632504998\\
};

\addlegendentry{refined with QOI $J(x,u)$}

\addplot [color=black, dashdotted, line width=2.0pt, mark size=5.0pt, mark=x, mark options={solid, black}]
  table[row sep=crcr]{%
2389 0.91682016611256\\
5318 0.944416291212479\\
7873 0.870491893864859\\
10537 0.82933306700543\\
15932 0.860937615977943\\
};

\addlegendentry{refined with QOI $I^\tau(x,u)$}

\end{axis}

\end{tikzpicture}
	\caption[Comparison of MPC closed-loop cost for space-time adaptivity with linear quadratic dynamics]{Comparison of cost functional values of the MPC closed-loop trajectory for different QOIs used for space-time refinement.}
	\label{fig:numerics:fvals_spacetime}
\end{figure}
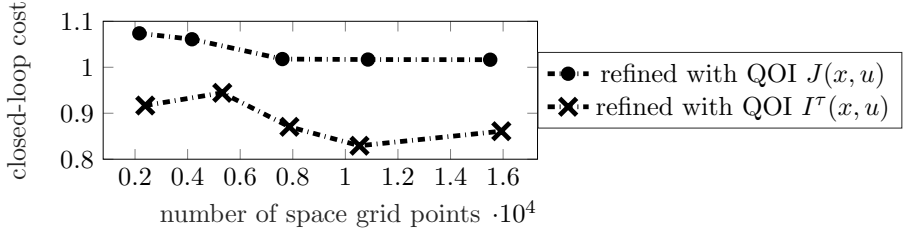

\FloatBarrier
\subsection{Boundary control of a quasilinear equation}
\label{subsec:numerics:quasilin}
As a second model problem we consider optimal control with the boundary controlled quasilinear problem. We define the nonlinear heat conduction tensor $\kappa(x)(t,\omega):=\left(c|x(t,\omega)|^2+d\right)$, where $c,d >0$ and consider the dynamics
\begin{align*}
&&\dot{x} - \nabla \cdot(\kappa(x)\nabla x) &= 0  &&\text{in } (0,T)\times \Omega ,\\
&&\kappa(x)\frac{\partial x}{\partial\nu} &= u &&\text{in } (0,T)\times \partial\Omega,\\
&&x(0) &= 0 &&\text{in } \Omega.
\end{align*}
\FloatBarrier
\subsubsection*{Time adaptivity}
In \cref{fig:numerics_quasilin_space_indicators} we depict the time error indicators and corresponding state and control norm over time for a non-autonomous problem with $x_\text{d}^\text{dyn}$ as reference, $\alpha = 10^{-2}$, $c=d=0.1$, and implementation horizon $\tau=1$. When using the full cost functional as QOI, the implementation horizon $[0,\tau]$ remains unrefined. The refinement for the truncated cost functional $I^\tau(x,u)$ is concentrated on the initial part.  The norm for the state is a scaled $H^1(\Omega)$-norm corresponding to the second derivative of the Lagrange function, i.e., $\|v\|_{\alpha d,H^1(\Omega)}:=\|v\|_{L_2(\Omega)}+\sqrt{\alpha d}\|\nabla v\|_{L_2(\Omega)}$.
\begin{figure}[H]
	\centering
	\scalebox{.9}{\input{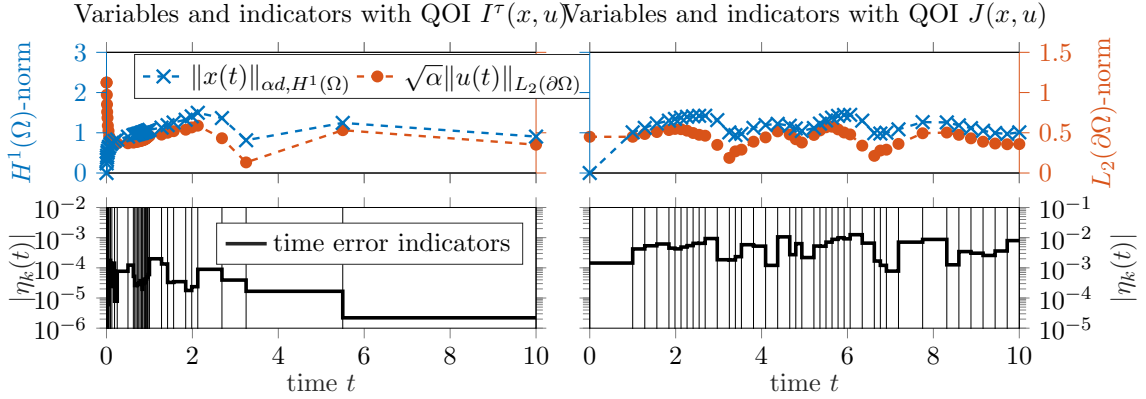}}
	\caption[Open loop trajectories and time grids for a boundary controlled quasilinear problem]{Open loop trajectories and error indicators in the first MPC step after adaptive refinement with 41 time grid points for an autonomous problem with boundary controlled quasilinear dynamics. The vertical lines illustrate the adaptively refined time grid.}
	\label{fig:numerics_quasilin_space_indicators}
\end{figure}

We briefly discuss the different choice of the implementation horizon $\tau$. In case of an autonomous system with distributed control it can happen that the the turnpike, i.e., an optimal steady state, is approached very quickly by the closed-loop trajectory. In order to be able to perform several MPC-steps before this convergence and to evaluate the performance, we chose $\tau = 0.5$ in \Cref{subsec:numerics:linquad}. However, when considering boundary controlled or non-autonomous systems, the behavior of the closed-loop trajectory is more dynamic, even after some action of the MPC controller. This is the reason why we are able to choose a higher horizon $\tau = 1$ and still perform four MPC steps without spending most of the time at an optimal steady state.

The depiction of the closed-loop cost of the MPC trajectory in \cref{fig:numerics:cost_quasilin} shows that the cost is again consistently lower when using $I^\tau(x,u)$ as a QOI for adaptive time refinement.
\begin{figure}[H]
	\centering
	\scalebox{.9}{
%
%
\definecolor{mycolor1}{rgb}{0.00000,0.44700,0.74100}%
\begin{tikzpicture}

\begin{axis}[%
width=2.5in,
height=0.8in,
at={(0in,0in)},
scale only axis,
xmin=4,
xmax=42,
xtick={ 5,  8, 11, 21, 31,41},
xlabel style={font=\color{white!15!black}},
xlabel={number of time grid points},
ylabel style={font=\color{white!15!black}},
ylabel={closed-loop cost},
axis background/.style={fill=white},
title style={align=center,at={(2.6in,1)}},
legend style={anchor=west, draw=white!15!black,at={(1,0.5)}}
]
\addplot [color=black, dashdotted, line width=2.0pt, mark=*, mark options={solid, black}]
  table[row sep=crcr]{%
5 9.2387769491819\\
8 9.26520448223292\\
11 9.26685645361639\\
21 9.22557569348871\\
31 9.22150697819468\\
41 9.13172109158479\\
};
\addlegendentry{refined with QOI $I^\tau(x,u)$}

\addplot [color=black, dashdotted, line width=2.0pt, mark size=5.0pt, mark=x, mark options={solid, black}]
  table[row sep=crcr]{%
5 9.13886382596811\\
8 9.14378885260488\\
11 8.99983838408757\\
21 8.88502511116267\\
31 8.86040609712719\\
41 8.82983038821291\\
};
\addlegendentry{refined with QOI $J(x,u)$}

\end{axis}
\end{tikzpicture}
	\caption[Comparison of MPC closed-loop cost for time adaptivity with boundary controlled quasilinear dynamics]{Comparison of cost functional values of the MPC closed-loop trajectory for different QOIs used for temporal refinement with quasilinear dynamics.}
	\label{fig:numerics:cost_quasilin}
\end{figure}
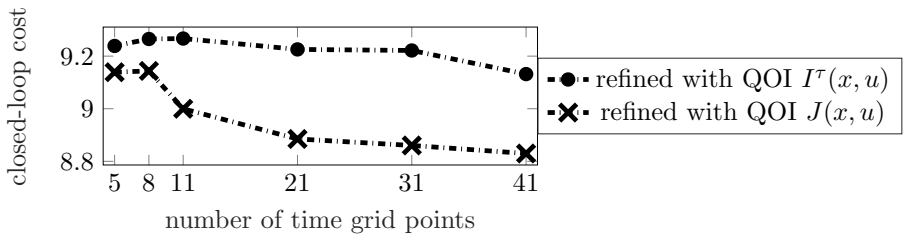
\FloatBarrier
\subsubsection*{Space adaptivity}
We consider the exponentially increasing reference trajectory $x_\text{d}^\text{exp}$, implementation horizon $\tau = 1$, Tikhonov parameter $\alpha=10^{-3}$ and parameters $d=10^{-1}$ and $c=10^{-2}$ for the heat conduction tensor.
In \cref{fig:spacedofs_exp_quasilin} we see that despite the exponentially increasing trajectory, the error indicators for $I^\tau(x,u)$ still decrease exponentially over time. The error indicators and the corresponding spatial DOFs after refinement for the full cost functional $J(x,u)$ are exponentially increasing. Note that due to the time-dependent reference trajectory, this problem does not exhibit steady-state turnpike behavior.
\begin{figure}[H]
	\centering
	\scalebox{.9}{
%
%
\definecolor{mycolor1}{rgb}{0.00000,0.44700,0.74100}%
\definecolor{mycolor2}{rgb}{0.85000,0.32500,0.09800}%
\definecolor{mycolor3}{rgb}{0.92900,0.69400,0.12500}%
\definecolor{mycolor4}{rgb}{0.49400,0.18400,0.55600}%
\begin{tikzpicture}

\begin{axis}[%
width=0.38\linewidth,
height=0.5in,
at={(0in,0.8in)},
scale only axis,
xmin=-0,
xmax=10,
xlabel style={font=\color{white!15!black}},
xlabel={},
xticklabels={\empty},
ymode=log,
ymin=1.61595e-06,
ymax=3.99486,
yminorticks=true,
ylabel style={font=\color{white!15!black}},
ylabel={$|\eta_h(t)|$},
axis background/.style={fill=white},
title={Refined with QOI $I^\tau(x,u)$},
legend style={at={(0.99,0.85)}, anchor=east, legend cell align=left, align=left, draw=white!15!black}
]
\addplot [ line width=1.5pt]
  table[row sep=crcr]{%
0.0000000000 0.0000000000 \\ 
0.2500000000 0.0663975000 \\ 
0.5000000000 0.1122705000 \\ 
0.7500000000 0.1818965000 \\ 
1.0000000000 0.1997430000 \\ 
1.2500000000 0.0501645000 \\ 
1.5000000000 0.0268392000 \\ 
1.7500000000 0.0121099500 \\ 
2.0000000000 0.0055271000 \\ 
2.2500000000 0.0025387650 \\ 
2.5000000000 0.0011751550 \\ 
2.7500000000 0.0005505450 \\ 
3.0000000000 0.0002578195 \\ 
3.2500000000 0.0000846285 \\ 
3.5000000000 0.0000434736 \\ 
3.7500000000 0.0000435253 \\ 
4.0000000000 0.0000244264 \\ 
4.2500000000 0.0000489252 \\ 
4.5000000000 0.0000647980 \\ 
4.7500000000 0.0000097117 \\ 
5.0000000000 0.0000080798 \\ 
5.2500000000 0.0000396541 \\ 
5.5000000000 0.0000386741 \\ 
5.7500000000 0.0000240972 \\ 
6.0000000000 0.0000353426 \\ 
6.2500000000 0.0000346831 \\ 
6.5000000000 0.0000686965 \\ 
6.7500000000 0.0000578610 \\ 
7.0000000000 0.0000313704 \\ 
7.2500000000 0.0000275549 \\ 
7.5000000000 0.0000522665 \\ 
7.7500000000 0.0000484878 \\ 
8.0000000000 0.0000286541 \\ 
8.2500000000 0.0000099293 \\ 
8.5000000000 0.0000435116 \\ 
8.7500000000 0.0000396604 \\ 
9.0000000000 0.0002181560 \\ 
9.2500000000 0.0003681340 \\ 
9.5000000000 0.0002544545 \\ 
9.7500000000 0.0002315595 \\ 
10.0000000000 0.0000674775 \\ 
};
\addlegendentry{space error indicators}
\addplot [color=black, line width=1.5pt, forget plot]
table[row sep=crcr]{%
1	1.61595e-06\\
1	3.99486\\
};
\end{axis}

\begin{axis}[%
width=0.38\linewidth,
height=0.5in,
at={(0in,0in)},
scale only axis,
xmin=-0,
xmax=10,
xlabel style={font=\color{white!15!black}},
xlabel={time},
ymode=log,
ymin=28,
ymax=1168,
yminorticks=true,
ylabel style={font=\color{white!15!black}},
ylabel={spatial DOFs(t)},
axis background/.style={fill=white},
legend style={at={(0.97,0.7)}, anchor=east, legend cell align=left, align=left, draw=white!15!black}
]
\addplot [color=mycolor1,dotted, line width=1.5pt]
  table[row sep=crcr]{%
0	33\\
0.25	88\\
0.5	87\\
0.75	105\\
1	111\\
1.25	49\\
1.5	52\\
1.75	46\\
2	33\\
2.25	33\\
2.5	33\\
2.75	33\\
3	33\\
3.25	33\\
3.5	33\\
3.75	33\\
4	33\\
4.25	33\\
4.5	33\\
4.75	33\\
5	33\\
5.25	33\\
5.5	33\\
5.75	33\\
6	33\\
6.25	33\\
6.5	33\\
6.75	33\\
7	33\\
7.25	33\\
7.5	33\\
7.75	33\\
8	33\\
8.25	33\\
8.5	33\\
8.75	33\\
9	33\\
9.25	33\\
9.5	33\\
9.75	33\\
10	33\\
};
\addlegendentry{low \#DOFs}

\addplot [color=mycolor2,dashdotted, line width=1.5pt]
  table[row sep=crcr]{%
0	33\\
0.25	299\\
0.5	331\\
0.75	384\\
1	411\\
1.25	153\\
1.5	124\\
1.75	88\\
2	62\\
2.25	56\\
2.5	55\\
2.75	39\\
3	33\\
3.25	33\\
3.5	33\\
3.75	33\\
4	33\\
4.25	33\\
4.5	33\\
4.75	33\\
5	33\\
5.25	33\\
5.5	33\\
5.75	33\\
6	33\\
6.25	33\\
6.5	33\\
6.75	33\\
7	33\\
7.25	33\\
7.5	33\\
7.75	33\\
8	33\\
8.25	33\\
8.5	33\\
8.75	33\\
9	33\\
9.25	33\\
9.5	33\\
9.75	33\\
10	33\\
};
\addlegendentry{med.\ \#DOFs}

\addplot [color=mycolor4,solid, line width=1.5pt]
  table[row sep=crcr]{%
0	33\\
0.25	520\\
0.5	788\\
0.75	1037\\
1	1163\\
1.25	417\\
1.5	231\\
1.75	205\\
2	153\\
2.25	101\\
2.5	83\\
2.75	66\\
3	55\\
3.25	45\\
3.5	39\\
3.75	33\\
4	33\\
4.25	33\\
4.5	33\\
4.75	33\\
5	33\\
5.25	33\\
5.5	36\\
5.75	36\\
6	36\\
6.25	36\\
6.5	33\\
6.75	33\\
7	41\\
7.25	36\\
7.5	36\\
7.75	43\\
8	33\\
8.25	36\\
8.5	39\\
8.75	42\\
9	42\\
9.25	36\\
9.5	42\\
9.75	68\\
10	63\\
};
\addlegendentry{high \#DOFs}

\addplot [color=black, line width=1.5pt, forget plot]
table[row sep=crcr]{%
1	28\\
1	1168\\
};
\end{axis}
\begin{axis}[%
width=0.38\linewidth,
height=0.5in,
at={(0.42\linewidth,0.8in)},
scale only axis,
xmin=-0,
xmax=10,
xlabel style={font=\color{white!15!black}},
xlabel={},
xticklabels={\empty},
ymode=log,
ymin=0.0120526,
ymax=37784.7,
yminorticks=true,
yticklabel pos = right,
ylabel near ticks,
ylabel style={font=\color{white!15!black}},
ylabel={$|\eta_h(t)|$},
axis background/.style={fill=white},
title={Refined with QOI $J(x,u)$},
legend style={at={(0.8,0.85)}, anchor=east, legend cell align=left, align=left, draw=white!15!black}
]
\addplot [ line width=1.5pt]
  table[row sep=crcr]{%
0.0000000000 0.0000000000 \\ 
0.2500000000 0.0602630000 \\ 
0.5000000000 0.0982705000 \\ 
0.7500000000 0.1571080000 \\ 
1.0000000000 0.2256265000 \\ 
1.2500000000 0.3099850000 \\ 
1.5000000000 0.4133145000 \\ 
1.7500000000 0.5350850000 \\ 
2.0000000000 0.6715100000 \\ 
2.2500000000 0.8156100000 \\ 
2.5000000000 0.9568500000 \\ 
2.7500000000 1.0804750000 \\ 
3.0000000000 1.1664650000 \\ 
3.2500000000 1.1882050000 \\ 
3.5000000000 1.1109750000 \\ 
3.7500000000 0.8907600000 \\ 
4.0000000000 0.4741110000 \\ 
4.2500000000 0.2003195000 \\ 
4.5000000000 1.2007850000 \\ 
4.7500000000 2.5970600000 \\ 
5.0000000000 3.8788450000 \\ 
5.2500000000 3.5903350000 \\ 
5.5000000000 2.3878700000 \\ 
5.7500000000 0.8618600000 \\ 
6.0000000000 0.4965765000 \\ 
6.2500000000 1.0959700000 \\ 
6.5000000000 0.3476000000 \\ 
6.7500000000 2.4326500000 \\ 
7.0000000000 8.1340500000 \\ 
7.2500000000 17.9593500000 \\ 
7.5000000000 33.5454500000 \\ 
7.7500000000 57.0405000000 \\ 
8.0000000000 94.5305000000 \\ 
8.2500000000 337.2605000000 \\ 
8.5000000000 493.8325000000 \\ 
8.7500000000 657.0600000000 \\ 
9.0000000000 839.3300000000 \\ 
9.2500000000 1062.1400000000 \\ 
9.5000000000 1346.1250000000 \\ 
9.7500000000 1681.7000000000 \\ 
10.0000000000 1889.2350000000 \\ 
};
\addplot [color=black, line width=1.5pt, forget plot]
table[row sep=crcr]{%
1	0.0120526\\
1	37784.7\\
};
\end{axis}

\begin{axis}[%
width=0.38\linewidth,
height=0.5in,
at={(0.42\linewidth,0in)},
scale only axis,
xmin=-0,
xmax=10,
xlabel style={font=\color{white!15!black}},
xlabel={time},
ymode=log,
ymin=28,
ymax=693,
yminorticks=true,
yticklabel pos = right,
ylabel near ticks,
ylabel style={font=\color{white!15!black}},
ylabel={spatial DOFs(t)},
axis background/.style={fill=white},
legend style={at={(0,0.7)}, anchor=west, legend cell align=left, align=left, draw=white!15!black}
]
\addplot [color=mycolor1,dotted,line width=1.5pt]
  table[row sep=crcr]{%
0	33\\
0.25	33\\
0.5	33\\
0.75	33\\
1	33\\
1.25	33\\
1.5	33\\
1.75	33\\
2	33\\
2.25	33\\
2.5	33\\
2.75	33\\
3	33\\
3.25	33\\
3.5	33\\
3.75	33\\
4	33\\
4.25	33\\
4.5	33\\
4.75	33\\
5	33\\
5.25	33\\
5.5	33\\
5.75	33\\
6	33\\
6.25	36\\
6.5	36\\
6.75	43\\
7	43\\
7.25	43\\
7.5	43\\
7.75	45\\
8	50\\
8.25	56\\
8.5	56\\
8.75	56\\
9	59\\
9.25	67\\
9.5	65\\
9.75	89\\
10	99\\
};

\addplot [color=mycolor2,dashdotted, line width=1.5pt]
  table[row sep=crcr]{%
0	33\\
0.25	33\\
0.5	33\\
0.75	33\\
1	33\\
1.25	33\\
1.5	33\\
1.75	33\\
2	33\\
2.25	33\\
2.5	33\\
2.75	33\\
3	33\\
3.25	33\\
3.5	33\\
3.75	33\\
4	33\\
4.25	33\\
4.5	33\\
4.75	36\\
5	43\\
5.25	43\\
5.5	43\\
5.75	43\\
6	43\\
6.25	50\\
6.5	56\\
6.75	56\\
7	65\\
7.25	65\\
7.5	78\\
7.75	90\\
8	109\\
8.25	109\\
8.5	116\\
8.75	144\\
9	153\\
9.25	187\\
9.5	219\\
9.75	252\\
10	297\\
};

\addplot [color=mycolor4,solid,line width=1.5pt]
  table[row sep=crcr]{%
0	33\\
0.25	33\\
0.5	33\\
0.75	33\\
1	33\\
1.25	33\\
1.5	33\\
1.75	33\\
2	33\\
2.25	33\\
2.5	33\\
2.75	33\\
3	33\\
3.25	33\\
3.5	33\\
3.75	38\\
4	43\\
4.25	43\\
4.5	43\\
4.75	45\\
5	50\\
5.25	56\\
5.5	56\\
5.75	62\\
6	71\\
6.25	86\\
6.5	101\\
6.75	112\\
7	115\\
7.25	127\\
7.5	142\\
7.75	176\\
8	234\\
8.25	273\\
8.5	326\\
8.75	386\\
9	446\\
9.25	502\\
9.5	576\\
9.75	632\\
10	688\\
};

\addplot [color=black, line width=1.5pt, forget plot]
table[row sep=crcr]{%
	1	28\\
	1	693\\
};
\end{axis}
\end{tikzpicture}
	\caption[Spatial error indicators and spatial DOFs for a quasilinear problem]{Spatial error indicators before refinement and spatial degrees of freedom after last refinement for different maximal numbers of degrees of freedom for a boundary controlled quasilinear problem. The vertical black line indicates the implementation horizon $\tau = 1$.}
	\label{fig:spacedofs_exp_quasilin}
\end{figure}
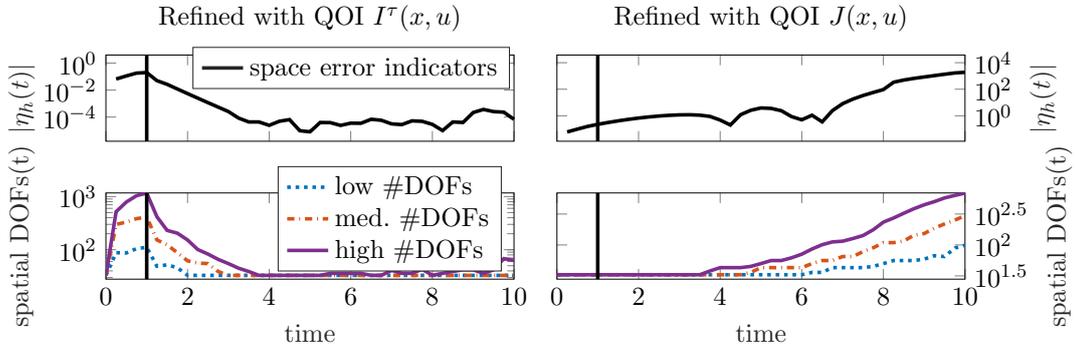 
The state over time and the corresponding space grids are shown in \cref{fig:spacegrids_quasilin}. Although state and control are relatively small on the initial part, the spatial refinement is most active there when refining for $I^\tau(x,u)$. On the other hand, the spatial grids refined for the full cost functional show no refinement on the whole implementation horizon $[0,1]$, as they are primarily refined towards the end of the horizon, due to the exponentially increasing reference trajectory.
\begin{figure}
	\begin{tikzpicture}[scale=0.95]
	\def\d{2.3}
	\def\t{0.5cm}
	\node (0,0){};
	\node[label={[label distance=0.2cm,text depth=-1ex]above:time $t$}] at (0.5\linewidth,1.4) {};
	\node[label={[label distance=0.2cm,text depth=-1ex]above:$I^\tau(x,u)$}] at (0.2\linewidth,1.4) {};
	\node[label={[label distance=0.2cm,text depth=-1ex]above:$J(x,u)$}] at (0.8\linewidth,1.4) {};
	\draw [very thick](0.5\linewidth,1.4) -> (0.5\linewidth,-4.5*\d) node [above right] {};
	\draw [arrow,very thick](0.5\linewidth,-5.5*\d) -> (0.5\linewidth,-7.5*\d) node [above right] {};
	\draw [very thick,dotted] (0.2\linewidth,-4.8*\d)-> (0.2\linewidth,-5.2*\d)[]{};
	\draw [very thick,dotted] (0.5\linewidth,-4.8*\d)-> (0.5\linewidth,-5.2*\d)[]{};
	\draw [very thick,dotted] (0.8\linewidth,-4.8*\d)-> (0.8\linewidth,-5.2*\d)[]{};
	
	\draw [very thick](0.5*\linewidth-\t,0) -> (0.5*\linewidth+\t,0);
	\draw [very thick](0.5*\linewidth-\t,0) -> (0.5*\linewidth+\t,0) node [ align=right,above right] {};
	\node[label={[anchor=south east]west:$t\!=\!0$}] at (0.51\linewidth,0.2cm) {};
	\draw [very thick](0.5*\linewidth-\t,-1*\d) -> (0.5*\linewidth+\t,-1*\d);;
	\node[label={[anchor=south east]west:$\tau\!=\!1$}] at (0.51\linewidth,-9) {};
	
	\draw [very thick](0.5*\linewidth-\t,-2*\d) -> (0.5*\linewidth+\t,-2*\d) node [ align=right,above right] {};
	\draw [very thick](0.5*\linewidth-\t,-3*\d) -> (0.5*\linewidth+\t,-3*\d) node [ align=right,above right] {};
	\draw [very thick](0.5*\linewidth-\t,-4*\d) -> (0.5*\linewidth+\t,-4*\d) node [ align=right,above right] {};
	\draw [very thick](0.5*\linewidth-\t,-6*\d) -> (0.5*\linewidth+\t,-6*\d) node [ align=right,above right] {};
	\draw [very thick](0.5*\linewidth-\t,-7*\d) -> (0.5*\linewidth+\t,-7*\d);
	\node[label={[anchor=south east]west:$T\!=\!10$}] at (0.51\linewidth,-15.9) {};

	\node[inner sep=0pt,anchor=west] (whitehead) at (0,0)
	{\includegraphics[width=0.4\linewidth]{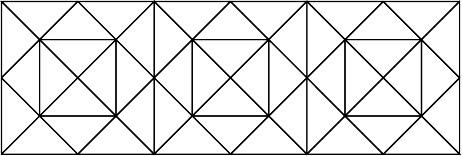}};
	\node[inner sep=0pt,anchor=east] (whitehead) at (0.2\linewidth,0)
	{\includegraphics[width=0.2\linewidth]{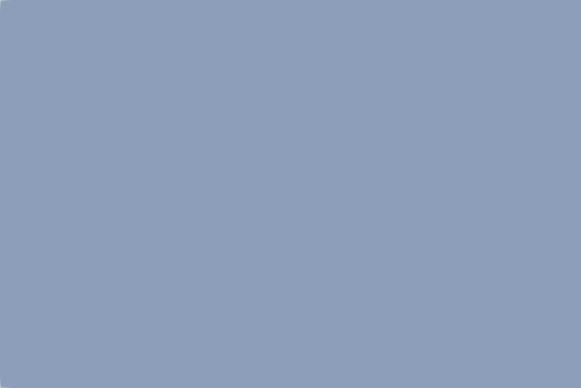}};
	
	\node[inner sep=0pt,anchor=west] (whitehead) at (0,-1*\d)
	{\includegraphics[width=0.4\linewidth]{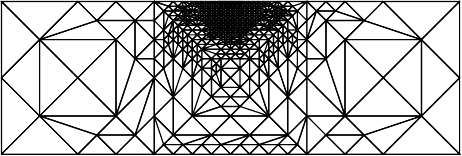}};
	\node[inner sep=0pt,anchor=east] (whitehead) at (0.2\linewidth,-1*\d)
	{\includegraphics[width=0.2\linewidth]{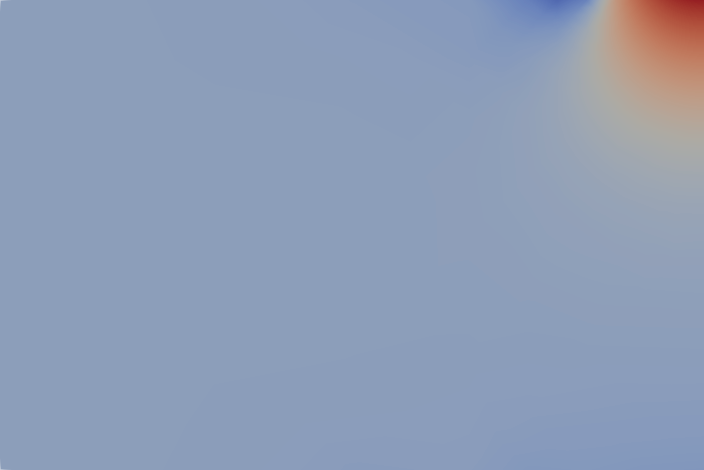}};
	\node[inner sep=0pt,anchor=west] (whitehead) at (0,-2*\d)
	{\includegraphics[width=0.4\linewidth]{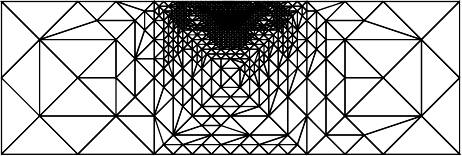}};
	\node[inner sep=0pt,anchor=east] (whitehead) at (0.2\linewidth,-2*\d)
	{\includegraphics[width=0.2\linewidth]{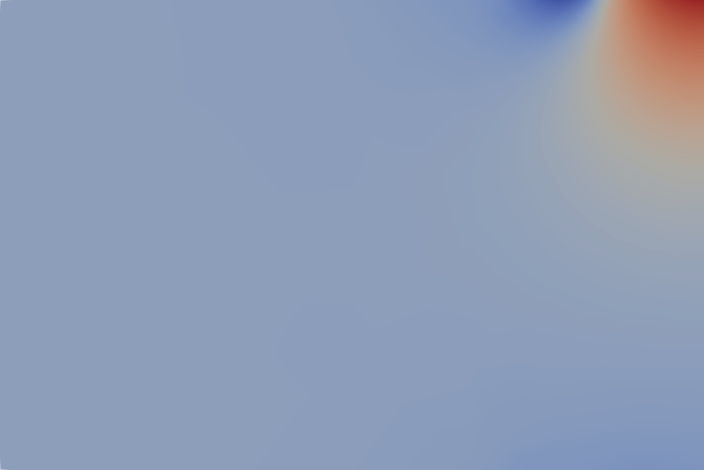}};
	\node[inner sep=0pt,anchor=west] (whitehead) at (0,-3*\d)
	{\includegraphics[width=0.4\linewidth]{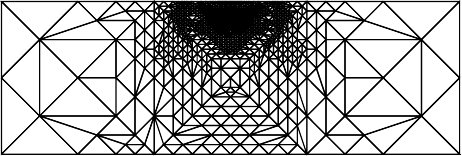}};
	\node[inner sep=0pt,anchor=east] (whitehead) at (0.2\linewidth,-3*\d)
	{\includegraphics[width=0.2\linewidth]{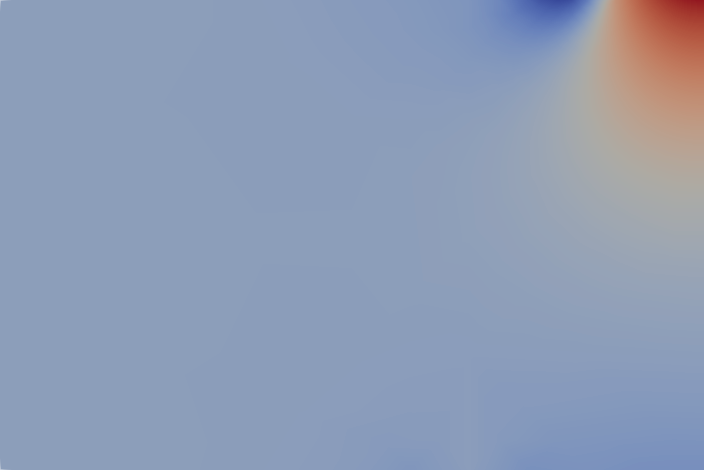}};
	\node[inner sep=0pt,anchor=west] (whitehead) at (0,-4*\d)
	{\includegraphics[width=0.4\linewidth]{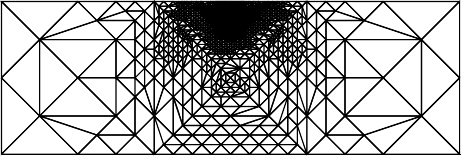}};
	\node[inner sep=0pt,anchor=east] (whitehead) at (0.2\linewidth,-4*\d)
	{\includegraphics[width=0.2\linewidth]{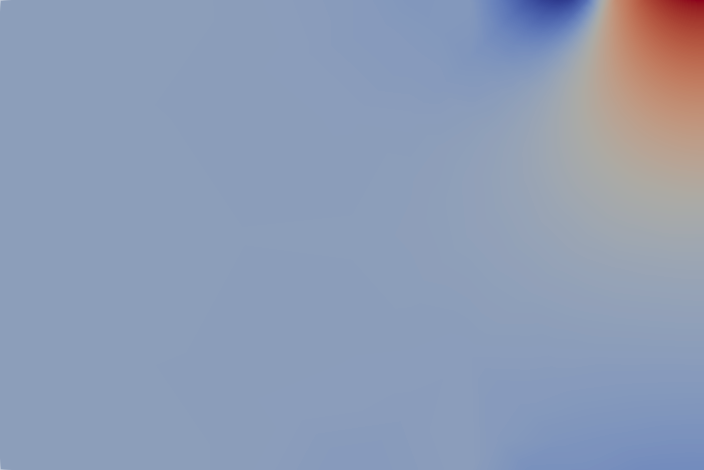}};
	\node[inner sep=0pt,anchor=west] (whitehead) at (0,-6*\d)
	{\includegraphics[width=0.4\linewidth]{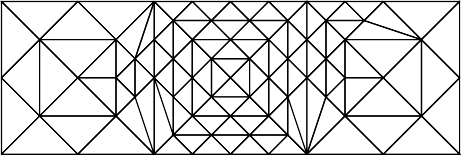}};
	\node[inner sep=0pt,anchor=east] (whitehead) at (0.2\linewidth,-6*\d)
	{\includegraphics[width=0.2\linewidth]{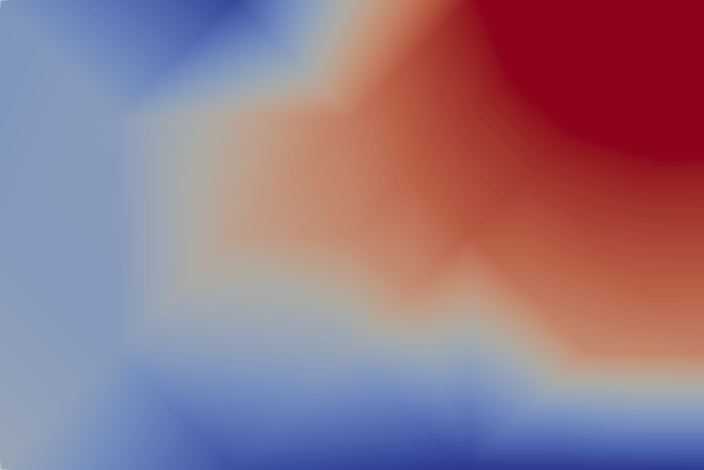}};
	\node[inner sep=0pt,anchor=west] (whitehead) at (0,-7*\d)
	{\includegraphics[width=0.4\linewidth]{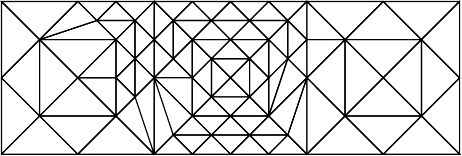}};
	\node[inner sep=0pt,anchor=east] (whitehead) at (0.2\linewidth,-7*\d)
	{\includegraphics[width=0.2\linewidth]{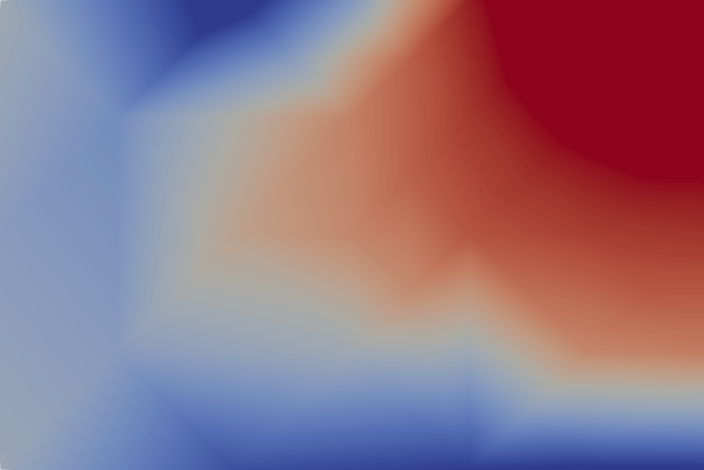}};

	\node[inner sep=0pt,anchor=east] (whitehead) at (\linewidth,0)
	{\includegraphics[width=0.4\linewidth]{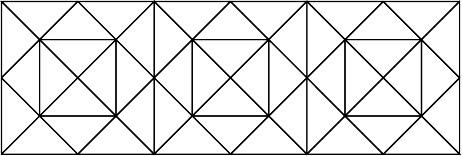}};
	\node[inner sep=0pt,anchor=west] (whitehead) at (0.8\linewidth,0)
	{\includegraphics[width=0.2\linewidth]{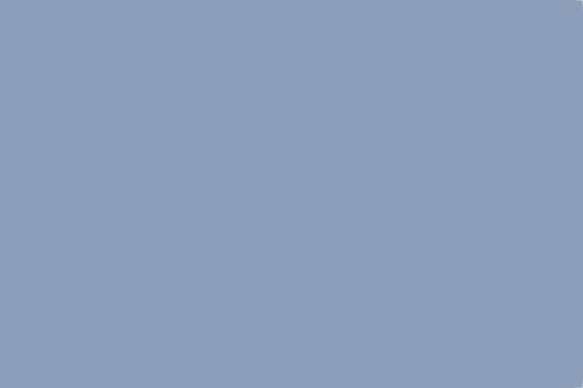}};
	\node[inner sep=0pt,anchor=east] (whitehead) at (\linewidth,-1*\d)
	{\includegraphics[width=0.4\linewidth]{figures/quasilin_bound/T0-4}};
	\node[inner sep=0pt,anchor=west] (whitehead) at (0.8\linewidth,-1*\d)
	{\includegraphics[width=0.2\linewidth]{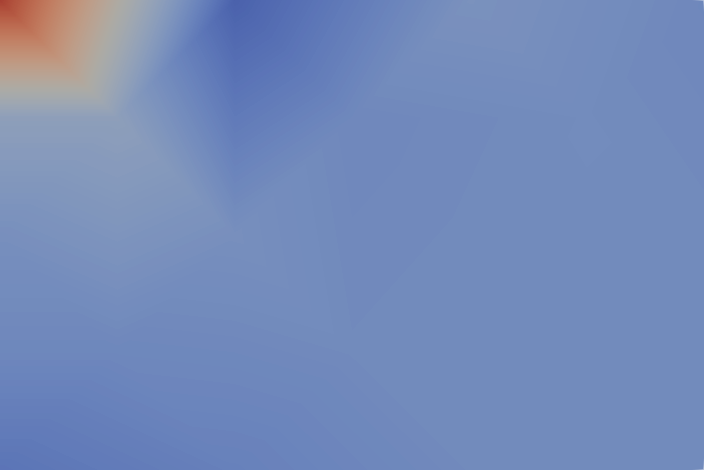}};
	
	\node[inner sep=0pt,anchor=east] (whitehead) at (\linewidth,-2*\d)
	{\includegraphics[width=0.4\linewidth]{figures/quasilin_bound/T0-4}};
	\node[inner sep=0pt,anchor=west] (whitehead) at (0.8\linewidth,-2*\d)
	{\includegraphics[width=0.2\linewidth]{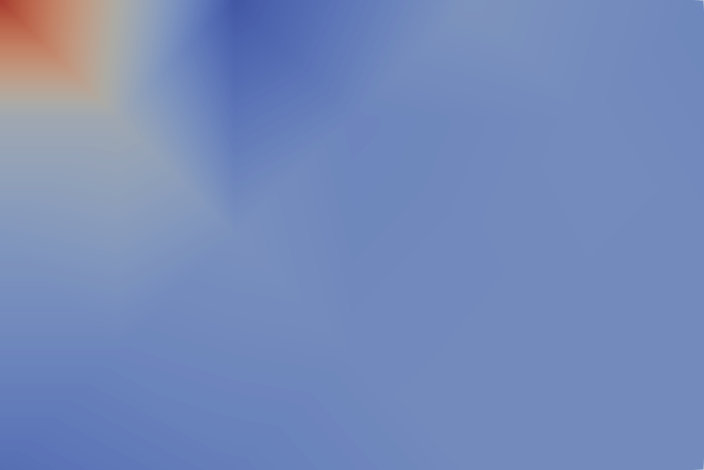}};
	\node[inner sep=0pt,anchor=east] (whitehead) at (\linewidth,-3*\d)
	{\includegraphics[width=0.4\linewidth]{figures/quasilin_bound/T0-4}};
	\node[inner sep=0pt,anchor=west] (whitehead) at (0.8\linewidth,-3*\d)
	{\includegraphics[width=0.2\linewidth]{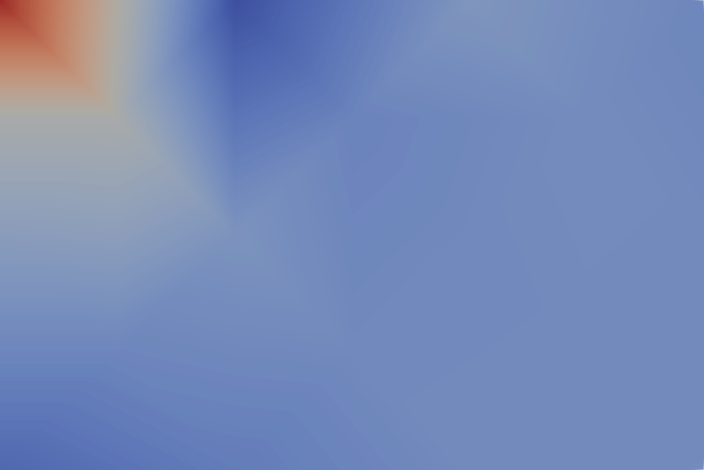}};
	\node[inner sep=0pt,anchor=east] (whitehead) at (\linewidth,-4*\d)
	{\includegraphics[width=0.4\linewidth]{figures/quasilin_bound/T0-4}};
	\node[inner sep=0pt,anchor=west] (whitehead) at (0.8\linewidth,-4*\d)
	{\includegraphics[width=0.2\linewidth]{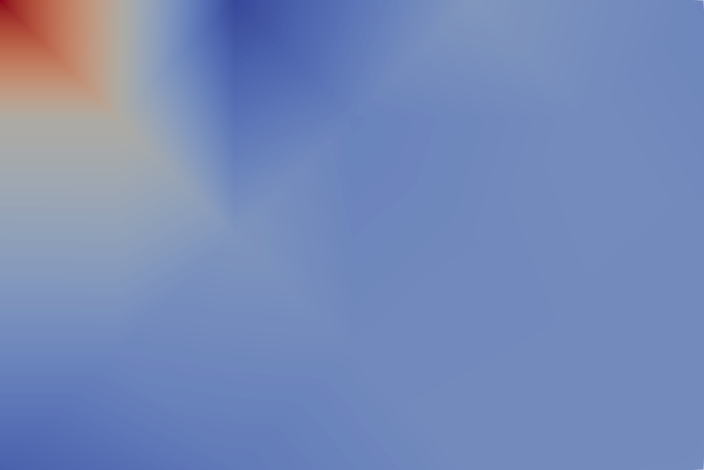}};
	\node[inner sep=0pt,anchor=east] (whitehead) at (\linewidth,-6*\d)
	{\includegraphics[width=0.4\linewidth]{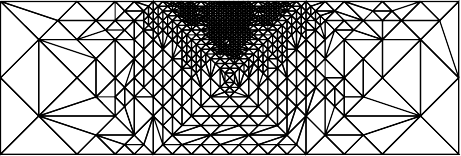}};
	\node[inner sep=0pt,anchor=west] (whitehead) at (0.8\linewidth,-6*\d)
	{\includegraphics[width=0.2\linewidth]{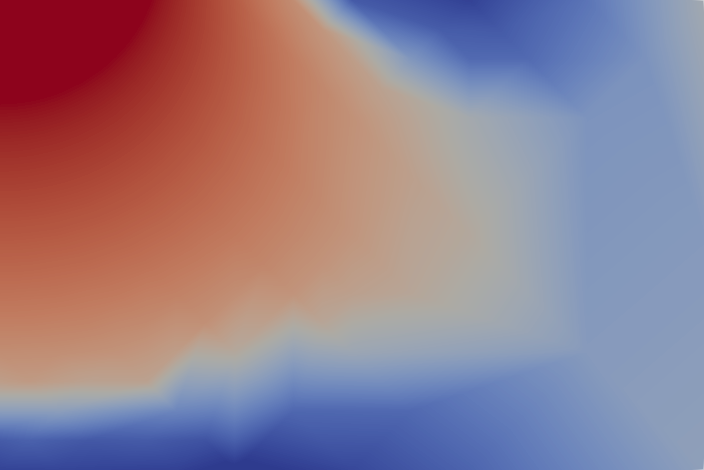}};
	\node[inner sep=0pt,anchor=east] (whitehead) at (\linewidth,-7*\d)
	{\includegraphics[width=0.4\linewidth]{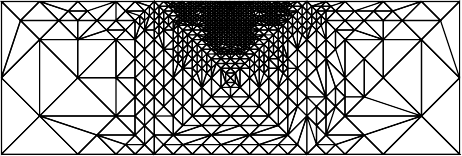}};
	\node[inner sep=0pt,anchor=west] (whitehead) at (0.8\linewidth,-7*\d)
	{\includegraphics[width=0.2\linewidth]{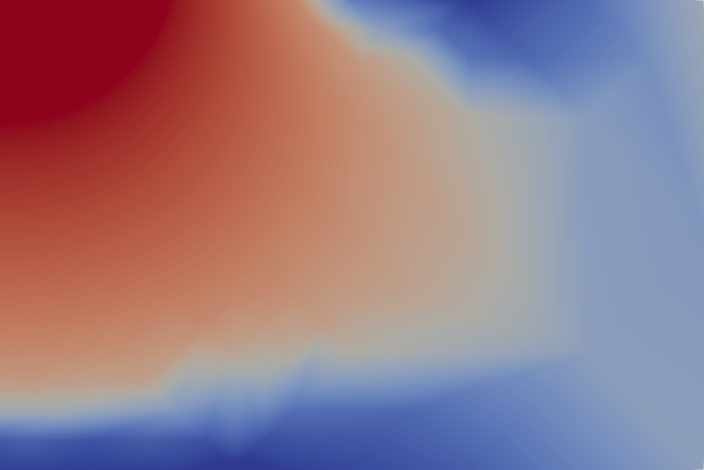}};
	\end{tikzpicture}
	\caption[Evolution of adaptively refined space grids for a quasilinear problem]{Evolution of adaptively refined space grids for the boundary control of a quasilinear equation refined for QOI $I^\tau(x,u)$ (left) and $J(x,u)$ (right) with 5934 and 5924 total spatial DOFs, respectively.}
	\label{fig:spacegrids_quasilin}
\end{figure}
Finally we show in \cref{fig:fvals_quasilin} the corresponding closed-loop cost of the MPC trajectory. Similar to the linear quadratic example with exponentially increasing reference in \cref{fig:fvals}, an increasing number of space grid points does not improve the MPC performance when refining with $J(x,u)$. This is due to the fact that the error indicators and thus also the refinements are predominant towards $T$ and not on the MPC implementation horizon. Thus, a refinement with the QOI $I^\tau(x,u)$ yields a significantly better controller performance.
\begin{figure}
	\centering
	\scalebox{.9}{
%
%
\definecolor{mycolor1}{rgb}{0.00000,0.44700,0.74100}%
\begin{tikzpicture}

\begin{axis}[%
width=2.5in,
height=0.8in,
at={(0in,1.5in)},
scale only axis,
xlabel style={font=\color{white!15!black}},
xlabel={number of space grid points},
ylabel style={font=\color{white!15!black}},
ylabel={closed-loop cost},
axis background/.style={fill=white},
title style={align=center,at={(2.6in,1)}},
legend style={at={(1,0.8)}, draw=white!15!black}
]
\addplot [color=black, dashdotted, line width=2.0pt, mark=*, mark options={solid, black}]
  table[row sep=crcr]{%
1353 89.17659939568\\
1625 89.1765989186191\\
2710 89.1765997259963\\
4929 89.176599920865\\
6250 89.1765903640215\\
};

\addlegendentry{refined with QOI $J(x,u)$}

\addplot [color=black, dashdotted, line width=2.0pt, mark size=5.0pt, mark=x, mark options={solid, black}]
  table[row sep=crcr]{%
1353 89.17659939568\\
1675 75.558674783273\\
2845 75.489945214135\\
4791 74.6925986000899\\
5932 74.6168870678624\\
};

\addlegendentry{refined with QOI $I^\tau(x,u)$}

\end{axis}

\end{tikzpicture}
	\caption[Comparison of MPC closed-loop cost for space adaptivity with quasilinear dynamics]{Comparison of cost functional values of the MPC closed-loop trajectory for different QOIs used for spatial refinement with quasilinear problem.}
	\label{fig:fvals_quasilin}
\end{figure}
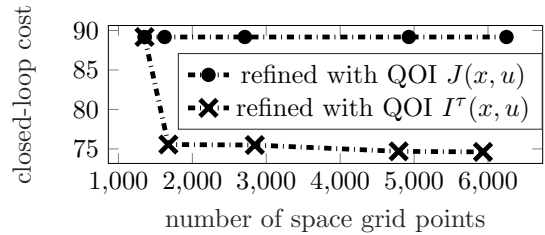
\section{Conclusion and outlook}
\label{sec:numerics:outlook}
We have shown in this work how goal oriented error estimation can be used to efficiently solve the optimal control problems occurring in a Model Predictive Controller. To this end, we proved estimates on the continuous-time and discrete-time error indicators showing that they decay exponentially outside the support of the QOI used for refinement. Moreover we presented two particular examples and illustrated the performance of a truncated cost functional as QOI for adaptive MPC in terms of closed-loop cost. We showed in various examples the efficiency of this approach, i.e., for a fixed number of total degrees of freedom, the distribution of the grid points induced by a localized cost functional as QOI leads to a significant reduction in the closed-loop cost in comparison to using the full cost function as QOI.

We conclude with several research perspectives. A straightforward adaption of the approach presented in this paper to hyperbolic problems can be considered, cf.\ \cite{Kroener2011}. Further, one could utilize model order reduction combined with grid adaptivity to obtain fast MPC methods. To this end, on the one hand, we refer to recent works combining grid adaptivity and proper orthogonal decomposition \cite{Graessle2019,Graessle2018}. On the other hand, there are several recent works employing proper orthogonal decomposition in an MPC context, cf.\ \cite{Gruene2019b,Mechelli2017}. In that context, the turnpike property could turn out useful as it reveals a lot of structure of the dynamic problem. In case of a steady state turnpike, a reduced basis for the corresponding elliptic steady-state OCP might help in constructing a basis for the time-dependent problem.

\bibliographystyle{abbrv}
\bibliography{references.bib}
\end{document}